\newtheorem{theorem}{Theorem}[section]
\newtheorem{definition}[theorem]{Definition}
\newtheorem{lemma}[theorem]{Lemma}
\newtheorem{proposition}[theorem]{Proposition}
\newtheorem{corollary}[theorem]{Corollary}
\newtheorem{remark}[theorem]{Remark}
\newtheorem{example}[theorem]{Example}
\newtheorem{examples}[theorem]{Examples}
\newtheorem{assumption}[theorem]{Assumption}
\newcommand{\hh}{{\mathbb{H}}}
\newcommand{\cc}{{\mathbb{C}}}
\newcommand{\rr}{{\mathbb{R}}}
\newcommand{\zz}{{\mathbb{Z}}}
\newcommand{\nn}{{\mathbb{N}}}
\newcommand{\s}{{\mathbb{S}}}
\newcommand{\sr}{\mathcal{SR}}
\newcommand{\I}{\mathcal{I}}
\newcommand{\scap}{\mathrm{Cap}}
\newcommand{\core}{\mathrm{core}}
\newcommand{\spine}{\mathrm{spine}}
\newcommand{\debar}{\overline{\partial}}
\newcommand\vs[1]{{#1}_s^\circ}
\newcommand\re{\operatorname{Re}}
\newcommand\im{\operatorname{Im}}
\newcommand{\ui}{\imath}
\newcommand{\OO}{\Omega}
\newcommand{\mscr}{\mathscr}
\newcommand{\mc}{\mathcal}
\newcommand{\hslashslash}{%
  \raisebox{.9ex}{%
    \scalebox{.7}{%
      \rotatebox[origin=c]{18}{$-$}%
    }%
  }%
}
\newcommand{\fslash}{%
  {%
   \vphantom{f}%
   \ooalign{\kern.05em\smash{\hslashslash}\hidewidth\cr$f$\cr}%
   \kern.05em
  }%
}
\title{\bf  Quaternionic slice regularity\\ beyond slice domains}
\author{Riccardo Ghiloni\\
\small Dipartimento di Matematica, Universit\`a di Trento\\ 
\small Via Sommarive 14, I-38123 Povo Trento, Italy\\
\small riccardo.ghiloni@unitn.it\\
\and
Caterina Stoppato
\\ 
\small Dipartimento di Matematica e Informatica ``U. Dini'', Universit\`a di Firenze \\
\small Viale Morgagni 67/A, I-50134 Firenze, Italy\\
\small caterina.stoppato@unifi.it}
\date{  }
\def\@tocline#1#2#3#4#5#6#7{\relax

\ifnum #1>\c@tocdepth 

\else

\par \addpenalty\@secpenalty\addvspace{#2}%

\begingroup \hyphenpenalty\@M

\@ifempty{#4}{%

\@tempdima\csname r@tocindent\number#1\endcsname\relax

}{%

\@tempdima#4\relax

}%

\parindent\z@ \leftskip#3\relax \advance\leftskip\@tempdima\relax

\rightskip\@pnumwidth plus4em \parfillskip-\@pnumwidth

#5\leavevmode\hskip-\@tempdima

\ifcase #1

\or\or \hskip 1em \or \hskip 2em \else \hskip 3em \fi%

#6\nobreak\relax

\dotfill\hbox to\@pnumwidth{\@tocpagenum{#7}}\par

\nobreak

\endgroup

\fi}
\begin{document}

\maketitle

\vfill
\begin{abstract}
After Gentili and Struppa introduced in 2006 the theory of quaternionic slice regular function, the theory has focused on functions on the so-called slice domains. The present work defines the class of speared domains, which is a rather broad extension of the class of slice domains, and proves that the theory is extremely interesting on speared domains. A Semi-global Extension Theorem and a Semi-global Representation Formula are proven for slice regular functions on speared domains: they generalize and strengthen some known local properties of slice regular functions on slice domains. A proper subclass of speared domains, called hinged domains, is defined and studied in detail. For slice regular functions on a hinged domain, a Global Extension Theorem and a Global Representation Formula are proven. The new results are based on a novel approach: one can associate to each slice regular function $f:\OO\to\hh$ a family of holomorphic stem functions and a family of induced slice regular functions. As we tighten the hypotheses on $\OO$ (from an arbitrary quaternionic domain to a speared domain, to a hinged domain), these families represent $f$ better and better and allow to prove increasingly stronger results.
\end{abstract}
\vfill
\vfill

\newpage

\setcounter{tocdepth}{2}

\tableofcontents

\section{Introduction}\label{sec:introduction}

This work aims at significantly extending the theory of quaternionic slice regular functions introduced in~\cite{cras,advances} (see Definition~\ref{def:cullenregular}). The extension performed is manifold.

While the theory has focused thus far on the so-called slice domains (see Definition~\ref{def:slicedomain}), an important achievement here is the definition of a wider class of quaternionic domains, called \emph{speared domains} (see Definition~\ref{def:speared}). Slice regular functions on speared domains turn out to inherit many of the nice properties proven over slice domains in~\cite{geometricfunctiontheory,localrepresentation}. We point out that carefully choosing the domains of definition of slice regular functions is crucial, as pathological examples can be constructed on quaternionic domains that do not intersect the real axis. Slice regular functions on domains included in $\hh\setminus\rr$ need not be continuous, see~\cite[Example 1.11]{librospringer2ed}.

The new approach adopted to study slice regularity on a general speared domain $\OO$ is so powerful that it allows to strengthen the Local Extension Theorem and the Local Representation Formula of~\cite{localrepresentation} to what we call the Semi-global Extension Theorem~\ref{thm:localextension} and the Semi-global Representation Formula (Corollary~\ref{cor:localrepresentationformula}). These new results are stronger than the aforementioned local results even when $\OO$ happens to be a slice domain.

Another significant novelty concerns the Extension Theorem, which was proven in~\cite{localrepresentation} for the so-called simple slice domains (see Definition~\ref{def:simple}) and is known to be false for general slice domains, as a counterexample appeared in~\cite[Pages 4--5]{douren1}. To see explicitly that that specific example is not a simple slice domain, see~\cite[Example 4.4]{geometricfunctiontheory}. Another related article is~\cite{dourensabadini}. In the present work, we prove the Global Extension Theorem~\ref{thm:extension} on the class of \emph{hinged domains} (see Definition~\ref{def:hingeddomains}), which is considerably wider than the class of simple slice domains. The result is not obtained directly under the global hypothesis that the domain is hinged. Instead, several preliminary steps are taken on a general speared domain $\OO$. These include constructing an equivalence relation $\sim$ on $\OO$ and proving, in Theorem~\ref{thm:hingedpoints}, that if $f$ is a slice regular function on $\OO$ and $x,x'$ are equivalent points of $\OO$, then the local extensions of $f$ near $x$ and $x'$ are consistent. Only later we define hinged domains by means of the equivalence relation $\sim$ and prove that a slice regular function on a hinged domain $\OO$ automatically extends to a unique slice regular function on the so-called symmetric completion of $\OO$ (see Subsection~\ref{subsec:quaternions}).

The paper is organized as follows.

Section~\ref{sec:preliminaries} is devoted to preliminary material: Subsection~\ref{subsec:quaternions} sets up the complex and quaternionic algebras; Subsection~\ref{subsec:regular} recalls the definition and the first properties of slice regular functions; Subsection~\ref{subsec:slicefunctions} recalls the alternate approach to regularity proposed in~\cite{perotti} by introducing the function class $\mc{SR}$.

Section~\ref{sec:generalextensionformula} provides a novel interpretation of the General Extension Formula of~\cite{advancesrevised}. This makes it possible to associate to a slice regular function $f$ on an arbitrary domain $\OO\subseteq\hh$ a double-index family of elements of $\mc{SR}$ strictly related to $f$.

Section~\ref{sec:localrepresentationformula} defines the concept of speared domain. For a slice regular function $f$ on a speared domain $\OO\subseteq\hh$, the double-index family associated to $f$ reduces to a single-index family. This allows us to prove the aforementioned Semi-global Extension Theorem and Representation Formula, which open the path for a detailed analysis of slice regularity on speared domains.

Section~\ref{sec:hinged} is divided into two parts. Subsection~\ref{subsec:hingedpoints} constructs, on an arbitrary speared domain $\OO\subseteq\hh$, the aforementioned equivalence relation $\sim$. For two equivalent points $x,x'$, the corresponding elements of the single-index family are strictly related. This allows to prove that the extension of $f$ near $x$ is consistent with the extension of $f$ near $x'$, as we already mentioned. Subsection~\ref{subsec:extensiontheorem} defines hinged domains as those for which the equivalence relation $\sim$ becomes trivial. The Global Extension Theorem for hinged domains is therefore an immediate consequence of the work done in the previous subsection. A Global Representation Formula follows.

Section~\ref{sec:classesofdomains} studies speared domains and hinged domains in full detail. Subsection~\ref{subsec:speared} proves that the class of speared domains strictly includes the class of slice domains and provides a large family of examples. It also proves that every speared domain can be locally shrunk to a slice domain. The final Subsection~\ref{subsec:hingeddomains} presents several subclasses of the class of hinged domains: \emph{spear-simple} domains, including simple slice domains; \emph{$\s$-connected} speared domains; and speared domains having a \emph{main sail}. A wealth of examples is provided, to show that all these subclasses are distinct and that there exist hinged domains belonging to none of these subclasses.


\section{Preliminaries}\label{sec:preliminaries}

\subsection{Real quaternions and their complexification}\label{subsec:quaternions}

Let $\cc,\hh$ denote the real $^*$-algebras of complex numbers and quaternions, respectively. As explained in~\cite{ebbinghaus,libroward}, they can be built from the real field $\rr$ by means of the so-called Cayley-Dickson construction:

\begin{itemize}
\item $\cc=\rr+i\rr$, $(\alpha+i\beta)(\gamma+i\delta)=\alpha\gamma-\beta\delta+i(\alpha\delta+\beta\gamma)$, $(\alpha+i\beta)^c=\alpha-i\beta\ \forall\,\alpha,\beta,\gamma,\delta\in\rr$,
\item $\hh=\cc+j\cc$, $(\alpha+j\beta)(\gamma+j\delta)=\alpha\gamma-\beta^c\delta+j(\alpha^c\delta+\beta\gamma)$, $(\alpha+j\beta)^c=\alpha^c-j\beta\ \forall\,\alpha,\beta,\gamma,\delta\in\cc$.
\end{itemize}

On the one hand, this construction endows the two real vector spaces with a bilinear multiplicative operation, which makes each of them a (real) \emph{algebra}. By construction, each of them is \emph{unitary}, that is, it has a multiplicative neutral element $1$; and $\rr$ is identified with the subalgebra generated by $1$. For the purpose of the present work, every time we will speak of an algebra or of a subalgebra, we will automatically imply that it is unitary. It is well-known that $\hh$ is associative but not commutative. The \emph{center} $\{r\in\hh\, |\, rx=xr\ \forall\,x\in \hh\}$ of the algebra $\hh$ is $\rr$.

On the other hand, the Cayley-Dickson construction endows each of $\cc,\hh$ with a \emph{$^*$-involution}, i.e., a (real) linear transformation $x\mapsto x^c$ with the following properties: $(x^c)^c=x$ and $(xy)^c=y^cx^c$ for every $x,y$; $x^c=x$ for every $x \in \rr$. Thus, $\cc$ and $\hh$ are \emph{$^*$-algebras}. The \emph{trace} and \emph{norm} functions, defined by the formulas
\begin{equation}\label{traceandnorm}
t(x):=x+x^c \quad \text{and} \quad n(x):=xx^c,
\end{equation}
are real-valued. Moreover, in $\cc\simeq\rr^2$ and $\hh\simeq\rr^4$, the expression $\frac{1}{2}t(xy^c)$ coincides with the standard scalar product $\langle x,y\rangle$ and $n(x)$ coincides with the squared Euclidean norm $\Vert x\Vert^2$. In particular, these algebras are \emph{nonsingular}, i.e., $n(x)=xx^c=0$ implies $x=0$. 
The trace function $t$ vanishes on every commutator $[x,y]:=xy-yx$ (see~\cite[Lemma 5.6]{gpsalgebra}). It holds $n(xy) = n(x) n(y)$, or equivalently $\Vert xy\Vert = \Vert x\Vert\,\Vert y\Vert$. We point out that $(r+v)^c=r-v$ for all $r \in \rr$ and all $v$ in the Euclidean orthogonal complement of $\rr$.

Every nonzero element $x$ of $\cc$ or $\hh$ has a multiplicative inverse, namely $x^{-1} = n(x)^{-1} x^c = x^c\, n(x)^{-1}$. For all elements $x,y\neq 0$, the equality $(xy)^{-1} = y^{-1}x^{-1}$ holds. As a consequence, each of the algebras $\cc$ and $\hh$ is a division algebra and has no zero divisors. 
A famous result due to Frobenius states that $\rr,\cc,\hh$ are the only (finite-dimensional) associative division algebras.

Let us consider the $2$-sphere of quaternionic \emph{imaginary units}
\begin{equation} \label{eq:s_A}
\s:=\{x \in \hh\, | \, t(x)=0, n(x)=1\} = \{w \in \hh \, |\, w^2=-1\}\,.
\end{equation}
The $^*$-subalgebra generated by any $I \in \s$, i.e., $\cc_I=\mathrm{span}(1,I)$, is $^*$-isomorphic to the complex field $\cc$ (endowed with the standard multiplication and conjugation) through the $^*$-isomorphism 
\[\phi_I\ :\ \cc\to\cc_I,\quad \alpha+i\beta\mapsto\alpha+\beta I\,.\]
The union
\begin{equation} \label{eq:slice}
\text{$\bigcup_{I \in \s}\cc_I$}
\end{equation}
coincides with the entire algebra $\hh$. Moreover, $\cc_I \cap \cc_J=\rr$ for every $I,J \in \s$ with $I \neq \pm J$. As a consequence, every element $x$ of $\hh \setminus \rr$ can be expressed as follows: $x=\alpha+\beta I$, where $\alpha \in \rr$ is uniquely determined by $x$, while $\beta \in \rr$ and $I \in \s$ are uniquely determined by $x$, but only up to sign. If $x \in \rr$, then $\alpha=x$, $\beta=0$ and $I$ can be chosen arbitrarily in $\s$. Therefore, it makes sense to define the \emph{real part} $\re(x)$ and the \emph{imaginary part} $\im(x)$ by setting $\re(x):=t(x)/2=(x+x^c)/2 = \alpha$ and $\im(x):=x-\re(x)=(x-x^c)/2=\beta I$. It also makes sense to call the Euclidean norm $\Vert x\Vert=\sqrt{n(x)}=\sqrt{a^2+\beta^2}$ the \emph{modulus} of $x$ and to denote it as $|x|$. The algebra $\hh$ has the following useful property.
\begin{itemize}
\item {[Splitting property]} For each imaginary unit $I \in \s$, there exists $J \in \hh$ such that $\{1,I,J,IJ\}$ is a real vector basis of $\hh$, called a \emph{splitting basis} of $\hh$ associated to $I$. Moreover, $J$ can be chosen to be an imaginary unit and to make the basis orthonormal.
\end{itemize}
We consider on $\cc$ and $\hh$ the natural Euclidean topology and differential structure as a finite-dimensional real vector space. Similarly, we will adopt on $\cc$ and $\hh$ the natural structure of real analytic manifold. The relative topology on each $\cc_J$ with $J \in \s$ clearly agrees with the topology determined by the natural identification $\phi_J$ between $\cc$ and $\cc_J$. A subset $T$ of $\hh$ is called \emph{(axially) symmetric} (or \emph{circular}) if it is symmetric with respect to the real axis $\rr$; i.e., if for all $x=\alpha+\beta I\in T$ (with $\alpha,\beta\in\rr$ and $I\in\s$), the whole set
\[\s_x:=\alpha+\beta \, \s=\{\alpha+\beta I \in \hh \, | \, I \in \s\}\]
is included in $T$. We observe that $\s_x=\{x\}$ if $x \in \rr$. On the other hand, for $x \in \hh \setminus \rr$, the set $\s_x$ is obtained by real translation and dilation from the $2$-sphere $\s$. For each $T\subseteq\hh$, the \emph{symmetric completion} of $T$ is the set
\[\widetilde T:=\bigcup_{x\in T}\s_x\,.\]
For each $E\subseteq\cc$, the \emph{circularization} $\OO_E$ of $E$ is defined as the following subset of $\hh$:
\[\OO_E:=\left\{x \in \hh \, \big| \, \exists \alpha,\beta \in \rr, \exists I \in \s \mathrm{\ s.t.\ } x=\alpha+\beta I, \alpha+\beta i \in E\right\}\,.\]
For instance, given $x=\alpha+\beta I \in \hh$, we have that $\s_x$ is both the symmetric completion of the singleton $\{x\}\subset\hh$ and the circularization of the singleton $\{\alpha+i\beta\}\subset\cc$.

In this work, we will also use the complexified algebra $\hh_{\cc}=\hh \otimes_{\rr} \cc=\{x+\ui y \, | \, x,y \in \hh\}$, endowed with the following product:
\[
(x+\ui y)(x'+\ui y')=xx'-yy'+\ui (xy'+yx').
\]
In addition to the complex conjugation $\overline{x+\ui y}=x-\ui y$, $\hh_\cc$ is endowed with a $^*$-involution $x+\ui y \mapsto (x+\ui y)^c := x^c+\ui y^c$, which makes it a $^*$-algebra. This $^*$-algebra is still associative and it has center $\rr_\cc=\rr+\ui\rr$. If  we identify $\rr_\cc$ with $\cc$ and if $I\in\s$, then the previously defined $*$-isomorphism $\phi_I:\cc\to\cc_I$ extends to
\[\phi_I\ :\ \hh_\cc \to \hh\quad x+\ui y\mapsto x+Iy\,.\]
Throughout the paper, we will keep referring to $\rr_\cc=\rr+\ui\rr$, which is a distinct subset of $\hh_\cc=\hh+\ui\hh$ than $\cc=\rr+i\rr\subset\hh\subset\hh_\cc$. For instance: $\ui$ commutes with every element of $\hh_\cc$, while $i$ does not. We point out that the extended $\phi_I$ is still a surjective $^*$-algebra morphism, but is no longer injective. For future reference, we make the following remark. We recall that: a \emph{constant complex structure} on $\rr^{n_0}$ is an $\rr$-linear endomorphism $\mc{J}_0:\rr^{n_0}\to\rr^{n_0}$ such that $\mc{J}_0\circ\mc{J}_0=-id_{\rr^{n_0}}$; if $\mc{J}_0,\mc{J}_1$ are constant complex structures on $\rr^{n_0},\rr^{n_1}$, respectively, then a \emph{holomorphic map} $\psi:(\rr^{n_0},\mc{J}_0)\to(\rr^{n_1},\mc{J}_1)$ is a map $\psi:\rr^{n_0}\to\rr^{n_1}$ such that $\psi\circ\mc{J}_0=\mc{J}_1\circ\psi$.

\begin{remark}\label{rmk:phiholomorphic}
Left multiplication by $\ui$ is a constant complex structure on $\hh_\cc$, which preserves $\rr_\cc$. For each $I\in\s$, left multiplication by $I$ is a constant complex structure on $\hh$, which preserves $\cc_I$. The map $\phi_I$ is a holomorphic map from $(\hh_\cc,\ui)$ to $(\hh,I)$ and its restriction to $\rr_\cc$ is a biholomorphic map from $(\rr_\cc,\ui)$ to $(\cc_I,I)$.
\end{remark}

\subsection{Slice regular functions}\label{subsec:regular}

As usual, a nonempty open connected subset of $\cc$ is called a \emph{(complex) domain}. Similarly, a nonempty connected open subset of $\hh$ is called a \emph{(quaternionic) domain}. The following definition was given in~\cite{advancesrevised} and is a slight modification of~\cite{cras,advances}. It has been adopted as standard in the monograph~\cite{librospringer2ed}.

\begin{definition}\label{def:cullenregular}
Let $\OO\subseteq\hh$ be a domain and let $f:\OO\to\hh$ be a function. For each $I\in\s$, let us set $\OO_I:=\OO\cap\cc_I$ and $f_I:=f_{|_{\OO_I}}$. The restriction $f_I$ is termed \emph{holomorphic} if it is $\mscr{C}^1$ and if
\[\debar_If_I(\alpha+\beta I):=\frac12\left(\frac{\partial}{\partial\alpha}+I\frac{\partial}{\partial\beta}\right)f_I(\alpha+\beta I)\]
vanishes identically in $\OO_I$. The function $f$ is termed \emph{slice regular} if, for each $I\in\s$, $f_I$ is holomorphic.
\end{definition}

More precisely, the definition of $\debar_I$ should be stated as follows:

\begin{equation*} \label{def:debarI}
\debar_I f(x):=\frac{1}{2}\left(\left(\frac{\partial{}}{\partial\alpha}+I\frac{\partial{}}{\partial\beta}\right)(f\circ\widehat{\phi}_I)\right)((\widehat{\phi}_I)^{-1}(x))\,,
\end{equation*}
where $\widehat{\phi}_I$ denotes the restriction of $\phi_I$ to $\phi_I^{-1}(\OO_I)$ (see~\cite{slicebyslice}). In other words, $f_I$ is called holomorphic if it is a holomorphic map from $(\OO_I,I)$ to $(\hh,I)$. The so-called ``splitting lemma'' follows: after fixing a splitting basis $\{1,I,J,IJ\}$ of $\hh$ and letting $F,G$ be the functions $\OO_I \to \cc_I$ such that $f_I=F+GJ$, we have that $f_I$ is holomorphic if, and only if $F,G$ are holomorphic functions from $(\OO_I,I)$ to $(\cc_I,I)$.

For the purposes of this paper, we will also need the next definition.

\begin{definition}\label{def:cullenregularextended}
Let $U$ be an open subset of $\hh$: then every connected component of $U$ is a domain. We call a function $f:U\to\hh$  \emph{slice regular} if it is slice regular when restricted to each connected component of $U$.
\end{definition}

Polynomials and power series are examples of slice regular functions, see~\cite[Theorem 2.1]{advances}, while every slice regular function on a Euclidean ball centered at $0$ expands into power series, see~\cite[Theorem 2.7]{advances}. We can summarize these facts as follows. For any $x_0\in\hh$ and $R\in[0,+\infty]$, we adopt the notation
\[B(x_0, R) := \{x \in \hh\ |\ |x-x_0|<R\}\,.\]

\begin{theorem}
Every polynomial of the form $\sum_{m=0}^n x^ma_m = a_0+x a_1 + \ldots x^n a_n$ with coefficients $a_0, \ldots, a_n \in \hh$ is a slice regular function on $\hh$.

Every power series of the form $\sum_{n \in \nn} x^n a_n$ converges absolutely and uniformly on compact sets in $B(0, R)$ for some $R\in[0,+\infty]$, determined by the equality $1/R=\limsup_{n\to+\infty}|a_n|^{1/n}$. If $R>0$, then the sum of the series is a slice regular function on $B(0,R)$.

Conversely, if $R\in(0,+\infty]$ and if $f:B(0,R)\to\hh$ is a slice regular function, then there exists a sequence $\{a_n\}_{n\in\nn}\subset\hh$ such that
\[f(x)=\sum_{n \in \nn} x^n a_n\]
for all $x\in B(0,R)$.
\end{theorem}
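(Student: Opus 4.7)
The plan is to establish the three claims in turn, treating polynomials by a direct computation, power series by Cauchy--Hadamard plus Weierstrass, and the Taylor expansion by the splitting lemma combined with classical one-variable complex analysis.

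For polynomials I would fix $I\in\s$ and exploit that on $\cc_I$, writing $x=\alpha+\beta I$, the element $x$ commutes with $I$. Consequently $\partial_\alpha x^m = mx^{m-1}$ and $\partial_\beta x^m = mx^{m-1}I$, so $\debar_I(x^m) = \tfrac12(mx^{m-1} + I\cdot mx^{m-1}I) = 0$. Right multiplication by a quaternionic constant and finite addition both commute with the real-linear operator $\debar_I$, so every polynomial of the stated form is slice regular. For the convergence statement, multiplicativity of the quaternionic norm gives $|x^n a_n|=|x|^n|a_n|$, and the standard Cauchy--Hadamard computation combined with the Weierstrass M-test yields absolute and locally uniform convergence on $B(0,R)$. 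To deduce slice regularity of the sum, I restrict to a slice $B(0,R)\cap\cc_I$: the partial sums are holomorphic there by the polynomial step, and the classical Weierstrass theorem on locally uniform limits of holomorphic functions on a planar domain gives holomorphicity of the limit on that slice, for every $I\in\s$.

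The substantive content is the converse. Fix $I\in\s$ and a splitting basis $\{1,I,J,IJ\}$, and apply the splitting lemma to write $f_I = F + GJ$ on the disc $B(0,R)\cap\cc_I\subset\cc_I\cong\cc$, with $F,G$ classical holomorphic functions. Their ordinary Taylor expansions at the origin produce sequences $\{b_n^{(I)}\},\{c_n^{(I)}\}\subset\cc_I$ with $F(x)=\sum x^n b_n^{(I)}$ and $G(x)=\sum x^n c_n^{(I)}$ on that disc; setting $a_n^{(I)}:=b_n^{(I)}+c_n^{(I)}J\in\hh$, I obtain the expansion $f_I(x)=\sum x^n a_n^{(I)}$ on the slice.

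The main obstacle is that a priori the coefficients $a_n^{(I)}$ depend on the chosen $I$. I would resolve this by identifying them intrinsically as $a_n^{(I)}=\tfrac{1}{n!}f_I^{(n)}(0)$, the $n$-th $(\cc_I,I)$-holomorphic derivative of $f_I$ at the origin. Since $0\in\rr$, this derivative coincides with the $n$-th real derivative $\tfrac{1}{n!}(\partial_\alpha^n f)(0)$ taken along the real axis, a quantity manifestly independent of $I$. Denoting the common value by $a_n$, the series $\sum x^n a_n$ has Cauchy--Hadamard radius at least $R$ and, by the second step, defines a slice regular function $g$ on $B(0,R)$. The functions $f$ and $g$ are then slice regular on $B(0,R)$ and coincide on every slice $B(0,R)\cap\cc_I$, so they agree on the whole ball.
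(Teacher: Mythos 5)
The paper does not prove this theorem: it states it only as a summary of results it attributes to Gentili and Struppa (Theorems 2.1 and 2.7 of \emph{Adv.\ Math.}\ 216 (2007)), so there is no in-paper argument to compare against. Your proof is correct, and its structure coincides with the standard route in the literature. Each step checks out: the direct computation $\debar_I(x^m)=\tfrac12(mx^{m-1}+I\,mx^{m-1}I)=0$ relies precisely on $x$ and $I$ commuting inside $\cc_I$, and right multiplication by a constant manifestly commutes with the real-linear operator $\debar_I$; the Cauchy--Hadamard radius and the $M$-test go through verbatim because $|xy|=|x|\,|y|$ in $\hh$; Weierstrass is legitimately applied slicewise after the splitting $f_I=F+GJ$ reduces to a pair of $\cc_I$-valued holomorphic functions. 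The crux of the converse is your resolution of the apparent $I$-dependence of the Taylor coefficients, and your identification
\[
a_n^{(I)}=\tfrac{1}{n!}f_I^{(n)}(0)=\tfrac{1}{n!}\partial_\alpha^n f_I(0)
\]
is sound: since the center $0$ lies on $\rr$ and $\partial_\alpha$ differentiates in the real direction, the iterated limit $\partial_\alpha^n f_I(0)$ samples $f$ only along the real axis, hence equals $(f|_{\rr})^{(n)}(0)$ regardless of $I$. With the common coefficient $a_n$ in hand, Cauchy--Hadamard gives radius $\geq R$ for $\sum x^n a_n$, the second part of the theorem makes it slice regular, and it matches $f$ on every slice, hence on $B(0,R)$. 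No gaps.
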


A slice regular function $f$ is called \emph{slice preserving} if, for each $I\in\s$, it maps the ``slice'' $\OO_I$ into $\cc_I$. A polynomial or power series $\sum_{n \in \nn} x^n a_n$ is slice preserving if, and only if, $\{a_n\}_{n\in\nn}\subset\rr$. The following result derives from~\cite{zeros,open}.

\begin{lemma}\label{lem:representationball}
Let $R\in(0,+\infty]$ and let $f:B(0,R)\to\hh$ be a slice regular function. For each $\alpha,\beta\in\rr$ with $\alpha^2+\beta^2<R^2$, there exist quaternions $b,c$ such that
\[f(\alpha+\beta I)=b+Ic\]
for all $I\in\s$. Namely, if $f(x)=\sum_{n \in \nn} x^n a_n$ in $B(0,R)$ and if $s_n,t_n\in\rr$ are defined for each $n\in\nn$ to fulfill the equality $(\alpha+\beta I)^n=s_n+t_nI$, then $b=\sum_{n \in \nn} s_n a_n$ and $c=\sum_{n \in \nn} t_n a_n$.
\end{lemma}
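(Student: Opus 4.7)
The plan is to exploit the power series expansion provided by the preceding theorem, together with the $*$-isomorphism $\phi_I:\cc\to\cc_I$, to reduce the statement to a simple commutation argument.

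First I would fix $\alpha,\beta\in\rr$ with $\alpha^2+\beta^2<R^2$, together with $I\in\s$, and use the theorem to write $f(x)=\sum_{n\in\nn}x^na_n$ on $B(0,R)$, with absolute and uniform convergence on compact subsets. Since $\alpha+\beta I=\phi_I(\alpha+i\beta)$ and $\phi_I$ is a ring morphism, we have
\[(\alpha+\beta I)^n=\phi_I((\alpha+i\beta)^n)\,.\]
Writing $(\alpha+i\beta)^n=s_n+it_n$ with $s_n,t_n\in\rr$ (which may be defined either inductively via $s_{n+1}=\alpha s_n-\beta t_n$, $t_{n+1}=\alpha t_n+\beta s_n$, or via the binomial expansion), one obtains $(\alpha+\beta I)^n=s_n+t_nI$, where crucially $s_n,t_n$ depend only on $\alpha,\beta,n$ and not on $I$.

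Next I would substitute this into the power series. Because $s_n\in\rr$ lies in the center of $\hh$, the terms $s_na_n$ present no reordering issue; for the $t_nI$ contribution, we use that $t_n\in\rr$ commutes with $I$ and $a_n$, which lets us factor $I$ on the left to obtain the formal identity
\[f(\alpha+\beta I)=\sum_{n\in\nn}s_na_n+I\sum_{n\in\nn}t_na_n\,.\]
To make this rigorous, I would verify absolute convergence of the two new series. Since $|s_n|,|t_n|\le|(\alpha+i\beta)^n|=(\alpha^2+\beta^2)^{n/2}$ and $(\alpha^2+\beta^2)^{1/2}<R$, both $\sum_n s_na_n$ and $\sum_n t_na_n$ converge absolutely in $\hh$ by comparison with the (convergent) series $\sum_n(\alpha^2+\beta^2)^{n/2}|a_n|$ whose radius estimate comes from $1/R=\limsup|a_n|^{1/n}$. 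Setting $b:=\sum_n s_na_n$ and $c:=\sum_n t_na_n$, both quaternions are independent of $I$, and we conclude $f(\alpha+\beta I)=b+Ic$ for every $I\in\s$.

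There is essentially no main obstacle here; the only delicate point is the bookkeeping needed to split the power series and to move the factor $I$ outside the sum. That manipulation is justified once absolute convergence of the two new series is established and once one observes that for every partial sum the reshuffling is a finite computation in $\hh$, so passing to the limit gives the claimed identity.
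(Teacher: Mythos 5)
Your proof is correct and follows exactly the route the lemma's own statement indicates (the paper gives no proof, attributing the result to earlier work and essentially encoding the argument in the closing sentence ``$b=\sum_n s_n a_n$ and $c=\sum_n t_n a_n$''). Factoring through $\phi_I$ to get $(\alpha+\beta I)^n=s_n+t_nI$ with $I$-independent real coefficients, using centrality of $\rr$ to pull $I$ out of the sum, and justifying the rearrangement by absolute convergence via $|s_n|,|t_n|\le(\alpha^2+\beta^2)^{n/2}$ is precisely the intended argument.
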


As observed in~\cite{advancesrevised}, it holds
\begin{align*}
b&=(J-K)^{-1} \left[J f(\alpha+\beta J) - K f(\alpha+\beta K)\right]\\
c&=(J-K)^{-1} \left[f(\alpha+\beta J) - f(\alpha+\beta K)\right]
\end{align*}
for all $J,K\in\s$ with $J\neq K$. The same article~\cite{advancesrevised} proved a result called General Extension Formula, upon which we will elaborate in Section~\ref{sec:generalextensionformula}.

\subsection{Slice functions}\label{subsec:slicefunctions}

Let $D$ be a nonempty subset of $\rr_\cc$ that is invariant under the complex conjugation $z=\alpha+i\beta \mapsto \overline{z}=\alpha-i\beta$: then for each $J \in \s$ the map $\phi_J(\alpha+i\beta)=\alpha+\beta J$ naturally embeds $D$ into a ``slice'' $\phi_J(D)$ of $\OO_D=\bigcup_{J\in\s}\phi_J(D)\subseteq\hh$. The following definitions were given in~\cite{perotti}.

\begin{definition}\label{def:stemfunction}
Assume $D\subseteq\rr_\cc$ to be nonempty and preserved by complex conjugation. A function $F:D\to\hh_\cc$ is called a \emph{stem function} if $F(\overline{z})=\overline{F(z)}$ for every $z \in D$.
\end{definition}

If we consider a decomposition $F=F_1+\ui F_2$ into $\hh$-valued components $F_1$ and $F_2$, $F$ is a stem function if, and only if, $F_1(\overline{z})=F_1(z)$ and $F_2(\overline{z})=-F_2(z)$ for every $z \in D$.

\begin{definition} \label{def:slice-function}
Assume $D\subseteq\rr_\cc$ to be nonempty and preserved by complex conjugation. A function $f:\OO_D \to \hh$ is called a \emph{(left) slice function} if there exists a stem function  $F:D \to \hh_\cc$ such that the diagram
\begin{equation}\label{lifting}
\begin{CD}
D @>F> >\hh_\cc\\ 
@V V \phi_J V 
@V V \phi_J V\\
\OO_D @>f> >\hh 
\end{CD}
\end{equation}
commutes for each $J\in\s$. In this situation, we say that $f$ is induced by $F$ and we write $f=\I(F)$. If $F$ is $\rr_\cc$-valued, then we say that the slice function $f$ is \emph{slice preserving}. The real vector space of slice functions on $\OO_D$ is denoted as 
$\mc{S}(\OO_D)$.
\end{definition}
It is easy to check that each slice function $f$ is induced by a unique stem function $F$.

For each slice function $f:\OO_D \to \hh$, it is useful to define the slice function $\vs f:\OO_D \to \hh$, called \emph{spherical value} of $f$, and the slice function $f'_s:\OO_D \setminus \rr \to \hh$, called \emph{spherical derivative} of $f$, by setting
\[
\vs f(x):=\frac{1}{2}(f(x)+f(x^c))
\quad \text{and} \quad
f'_s(x):=\frac{1}{2}\im(x)^{-1}(f(x)-f(x^c)).
\] 
The works~\cite{advancesrevised,perotti} showed that these functions are constant on each sphere $\s_x\subseteq\OO_D \setminus \rr$. For all $x \in \OO_D \setminus \rr$, it holds
\[
f(x) = \vs  f(x)+ \im(x)  f'_s(x)\,,
\]
where the function $\im$ is a slice preserving element of $\mc{S}(\hh)$. The function $f$ is slice preserving if, and only if, $\vs f$ and $f'_s$ are real-valued.

Some special subclasses of the algebra $\mc{S}(\OO_D)$ of slice functions have been singled out in~\cite{perotti}: the nested subspaces $\mc{S}^0(\OO_D), \mc{S}^1(\OO_D), \mc{S}^\omega(\OO_D), \sr(\OO_D)$ of slice functions induced by continuous, continuously differentiable, real analytic and holomorphic stem functions $F:D\to\hh_\cc$, respectively. By holomorphic, here, we mean that $F$ is a holomorphic map from $(D,\ui)$ to $(\hh_\cc,\ui)$. For all of these definitions except the first one, $\OO_D$ is assumed to be an open subset of $\hh$ (whence $D$ is open in $\rr_\cc$). If $D$ is a connected open subset of $\rr_\cc$ and it intersects the real line $\rr$, then $\OO_D$ is called a \emph{symmetric slice domain}. If an open $D$ does not intersect $\rr$ and has two connected components switched by complex conjugation, then $\OO_D$ is called a \emph{product domain}. A general open $\OO_D$ is a disjoint union of symmetric slice domains and product domains. 

\begin{remark}
The elements of $\sr(\OO_D)$ are automatically slice regular functions according to Definition~\ref{def:cullenregularextended}. Indeed, if $f=\I(F)$ with $F:(D,\ui)\to(\hh_\cc,\ui)$ holomorphic, then, for each $J\in\s$, the composition $\phi_J\circ F=f\circ\phi_J=f_J\circ\phi_J$ is a holomorphic map $(D,\ui)\to(\hh,J)$. This, in turn, implies that $f_J$ is a holomorphic map $((\OO_D)_J,J)\to(\hh,J)$, because the restriction of $\phi_J$ to $D$ is a biholomorphic map $(D,\ui)\to((\OO_D)_J,J)$.
\end{remark}

It has been proven in~\cite{perotti} that, when $\OO=\OO_D$ is a symmetric slice domain, a function $f:\OO_D\to\hh$ belongs to $\sr(\OO_D)$ if, and only if, it is slice regular. We will provide a new proof of this fact in the forthcoming Section~\ref{sec:generalextensionformula}. On the other hand, when $\OO_D$ is a product domain, the class $\sr(\OO_D)$ (studied, for instance, in~\cite{altavillawithoutreal}) is strictly included in the class slice regular functions $\OO_D\to\hh$. To see this, it suffices to restrict to $\OO_D$ the slice regular function appearing in the next example.

\begin{example}
Fix $I_0 \in \s$ and let $f: \hh \setminus \rr \to \hh$ be defined as follows:
\[f(q) = \left\{ 
\begin{array}{ll}
0 \ \mathrm{if} \ q \in \hh \setminus \cc_{I_0}\\
1 \ \mathrm{if} \ q \in  \cc_{I_0} \setminus \rr
\end{array}
\right.\]
The function $f$ is slice regular because each restriction $f_I$ is constant, whence holomorphic. It belongs to neither $\sr(\hh \setminus \rr)$ nor $\mc{S}(\hh \setminus \rr)$.
\end{example}

More work on the relation between $\sr(\OO_D)$ and the class of slice regular functions on $\OO_D$ includes~\cite{slicebyslice}. The recent paper~\cite{localsliceanalysis} generalizes the class $\sr$, dropping the symmetry hypothesis on $\OO_D$ and imposing the existence of local zonal decompositions. This provides an extension of the theory of slice regular functions beyond the classical setup of slice domains, distinct from the one undertaken in the present work.


\section{General extension formula}\label{sec:generalextensionformula}

\subsection{General extension formula, revisited}

This subsection elaborates the ideas behind the General Extension Formula, namely~\cite[Theorem 4.2]{advancesrevised}. This will produce new powerful results in the forthcoming Subsection~\ref{subsec:double-index}.

For all $I\in\s$ and all $T\subseteq\hh$, we will use the notations
\begin{align*}
&\cc_I^\geqslant:=\{\alpha+\beta I\,|\, \alpha,\beta\in\rr, \beta\geq0\}\,,\\
&\cc_I^>:=\{\alpha+\beta I\,|\, \alpha,\beta\in\rr, \beta>0\}\,,
\end{align*}
as well as $T_I^\geqslant:=T\cap\cc_I^{\geqslant}$ and $T_I^>:=T\cap\cc_I^>$. In the sequel, we will speak of a \emph{$\mscr{C}^1$ function} $r:T_I^\geqslant\to\hh$ according to the usual convention: there exists a $\mscr{C}^1$ function $\widetilde r$ from an open neighborhood of $T_I^\geqslant$ in $\cc_I$ to $\hh$, whose restriction to $T_I^\geqslant$ equals $r$. Similarly, a $\mscr{C}^1$ function $r:T_I^\geqslant\to\hh$ will be called \emph{holomorphic} if there exist an open neighborhood $U$ of $T_I^\geqslant$ in $\cc_I$ and a holomorphic map $\widetilde r:(U,I)\to(\hh,I)$ whose restriction to $T_I^\geqslant$ equals $r$. We point out that, if $T_I^\geqslant$ is an open subset of $\cc_I^\geqslant$ and if $r:T_I^\geqslant\to\hh$ is a $\mscr{C}^1$ function, then the real partial derivative of $r$ in an arbitrary direction within $\cc_I$ is well-defined and continuous at each point of $T_I^\geqslant$, including the real points.

\begin{proposition}\label{prop:holomorphicfamily}
Let $\OO=\OO_D$ be a symmetric open subset of $\hh$ and take distinct $J,K\in\s$. Consider $\mscr{C}^1$ functions $r:\OO_J^\geqslant\to\hh$ and $s:\OO_K^\geqslant\to\hh$. For each $I\in\s$, we define $p^I:\OO_I^\geqslant\to\hh$ by setting, for each $\alpha+\beta I\in\OO$ with $\alpha,\beta\in\rr$ and $\beta\geq0$,
\[p^I(\alpha+\beta I) := (J-K)^{-1} \left[J r(\alpha+\beta J) - K s(\alpha+\beta K)\right] + I (J-K)^{-1} \left[r(\alpha+\beta J) - s(\alpha+\beta K)\right]\,.\]
The following properties hold:
\begin{itemize}
\item For each $I\in\s$, the function $p^I$ is $\mscr{C}^1$ in $\OO_I^\geqslant$.
\item $p^J=r$ and $p^K=s$.
\item For each $I\in\s$, it holds
\begin{align*}
\debar_I p^I(\alpha+\beta I)=\,& [(J-K)^{-1}J+I(J-K)^{-1}]\,\debar_J r(\alpha+\beta J)\\
&-[(J-K)^{-1}K+I(J-K)^{-1}]\,\debar_K s(\alpha+\beta K)\,.
\end{align*}
As a consequence, if $r$ is holomorphic in $\OO_J^\geqslant$ and if $s$ is holomorphic in $\OO_K^\geqslant$, then $p^I$ is holomorphic in $\OO_I^\geqslant$.
\end{itemize}
\end{proposition}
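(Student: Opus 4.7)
The plan is to establish the three bullet points by direct calculation. Set $b(\alpha,\beta):=(J-K)^{-1}[Jr(\alpha+\beta J)-Ks(\alpha+\beta K)]$ and $c(\alpha,\beta):=(J-K)^{-1}[r(\alpha+\beta J)-s(\alpha+\beta K)]$, so that $p^I(\alpha+\beta I)=b(\alpha,\beta)+I\,c(\alpha,\beta)$ for every $I\in\s$. Since $r$ and $s$ are $\mscr{C}^1$ on $\OO_J^\geqslant$ and $\OO_K^\geqslant$ respectively, the pullbacks $(\alpha,\beta)\mapsto r(\alpha+\beta J)$ and $(\alpha,\beta)\mapsto s(\alpha+\beta K)$ are $\mscr{C}^1$; hence so are $b$ and $c$, from which the first bullet follows.

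For the second bullet, set $n:=|J-K|^2$. Since $(J-K)^c=-(J-K)$, one has $(J-K)^{-1}=(K-J)/n$, which yields the formulas $(J-K)^{-1}J=(KJ+1)/n$, $J(J-K)^{-1}=(JK+1)/n$, $(J-K)^{-1}K=-(1+JK)/n$, and $K(J-K)^{-1}=-(1+KJ)/n$. Combined with $JK+KJ=-2\langle J,K\rangle$ and $n=2-2\langle J,K\rangle$, one checks that in the expansion of $p^J(\alpha+\beta J)=b+Jc$ the coefficient of $r(\alpha+\beta J)$ is $(J-K)^{-1}J+J(J-K)^{-1}=(2+JK+KJ)/n=1$, while the coefficient of $s(\alpha+\beta K)$ is $-(J-K)^{-1}K-J(J-K)^{-1}=(1+JK)/n-(JK+1)/n=0$; hence $p^J=r$. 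An analogous computation, expanding $p^K=b+Kc$, shows that the $r$-coefficient is $(J-K)^{-1}J+K(J-K)^{-1}=0$ and the $s$-coefficient is $-(J-K)^{-1}K-K(J-K)^{-1}=(2+JK+KJ)/n=1$, giving $p^K=s$.

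For the third bullet, the chain rule gives $\partial_\alpha[r(\alpha+\beta J)]=(\partial_\alpha r)(\alpha+\beta J)$ and $\partial_\beta[r(\alpha+\beta J)]=(\partial_\beta r)(\alpha+\beta J)$ (and similarly for $s$). Applying $\tfrac{1}{2}(\partial_\alpha+I\partial_\beta)$ to $p^I=b+Ic$, using $I^2=-1$, and grouping separately the four terms arising from $\partial_\alpha r$, $\partial_\beta r$, $\partial_\alpha s$, $\partial_\beta s$, the $r$-derivative terms factor, thanks to $J^2=-1$, as
\[
\bigl[(J-K)^{-1}J+I(J-K)^{-1}\bigr]\cdot\tfrac{1}{2}\bigl((\partial_\alpha r)(\alpha+\beta J)+J(\partial_\beta r)(\alpha+\beta J)\bigr)=\bigl[(J-K)^{-1}J+I(J-K)^{-1}\bigr]\debar_J r(\alpha+\beta J),
\]
and the $s$-derivative terms combine analogously into $-[(J-K)^{-1}K+I(J-K)^{-1}]\debar_K s(\alpha+\beta K)$. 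The holomorphic conclusion is then immediate from $\debar_J r\equiv 0$ and $\debar_K s\equiv 0$.

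The main obstacle will lie in the second bullet: the non-commutativity of $\hh$ forces one to keep careful track of the side on which each of $J$ and $K$ appears in every product, since in general $(J-K)^{-1}J\neq J(J-K)^{-1}$ and $(J-K)^{-1}K\neq K(J-K)^{-1}$. The crucial algebraic identities $J^2=K^2=-1$, $JK+KJ=-2\langle J,K\rangle$, and $|J-K|^2=2-2\langle J,K\rangle$ are precisely what make the relevant coefficients collapse to $0$ or $1$.
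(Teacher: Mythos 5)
Your proof is correct and follows essentially the same direct computational approach as the paper: first bullet by inspecting regularity of the pullbacks, second bullet by expanding $p^J=b+Jc$ and $p^K=b+Kc$ and collapsing the coefficients via the algebra of $J,K\in\s$, third bullet by applying $\tfrac12(\partial_\alpha+I\partial_\beta)$ and regrouping the four derivative terms so that $\debar_J r$ and $\debar_K s$ factor out. The only cosmetic difference is that you precompute the identities $(J-K)^{-1}J=(KJ+1)/n$, $J(J-K)^{-1}=(JK+1)/n$, etc., where the paper instead substitutes $(J-K)^{-1}=(K-J)/|J-K|^2$ and expands in place.
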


\begin{proof}
By direct inspection in the definition of $p^I$, we find that $p^I$ is $\mscr{C}^1$ in $\OO_I^\geqslant$. For all $\alpha+\beta J\in\OO_J^\geqslant$, it holds
\begin{align*}
p^J(\alpha+\beta J)=\,& [(J-K)^{-1}J+J(J-K)^{-1}]\,r(\alpha+\beta J)\\
&-[(J-K)^{-1}K+J(J-K)^{-1}]\,s(\alpha+\beta K)\,\\
=\,&|J-K|^{-2}[(-J+K)J+J(-J+K)]\,r(\alpha+\beta J)\\
&-|J-K|^{-2}[(-J+K)K+J(-J+K)]\,s(\alpha+\beta K)\,\\
=\,&[(J-K)(K-J)]^{-1} (2 + KJ + JK)]\,r(\alpha+\beta J)\\
&-|J-K|^{-2} (-JK - 1+ 1 + JK)\,s(\alpha+\beta K)\\
=\,&r(\alpha+\beta J)\,.
\end{align*}
Analogous computations prove that $p^K$ coincides with $s$ in $\OO_K^\geqslant$. Another direct inspection in the definition of $p^I$ yields that
\begin{align*}
\frac{\partial p^I}{\partial\alpha}(\alpha+\beta I)=\,& [(J-K)^{-1}J+I(J-K)^{-1}]\,\frac{\partial r}{\partial\alpha}(\alpha+\beta J)\\
&-[(J-K)^{-1}K+I(J-K)^{-1}]\,\frac{\partial s}{\partial\alpha}(\alpha+\beta K)\,.
\end{align*}
Moreover,
\begin{align*}
I\frac{\partial p^I}{\partial\beta}(\alpha+\beta I)=\,& [I(J-K)^{-1}J-(J-K)^{-1}]\,\frac{\partial r}{\partial\beta}(\alpha+\beta J)\\
&-[I(J-K)^{-1}K-(J-K)^{-1}]\,\frac{\partial s}{\partial\beta}(\alpha+\beta K)\\
=\,&[I(J-K)^{-1}+(J-K)^{-1}J]\,J\frac{\partial r}{\partial\beta}(\alpha+\beta J)\\
&-[I(J-K)^{-1}+(J-K)^{-1}K]\,K\frac{\partial s}{\partial\beta}(\alpha+\beta K)\,.
\end{align*}
The definition $\debar_I p^I(\alpha+\beta I):=\frac12\left(\frac{\partial}{\partial\alpha}+I\frac{\partial}{\partial\beta}\right)p^I(\alpha+\beta I)$ now yields that
\[\debar_I p^I(\alpha+\beta I)=[(J-K)^{-1}J+I(J-K)^{-1}]\,\debar_J r(\alpha+\beta J)-[(J-K)^{-1}K+I(J-K)^{-1}]\,\debar_K s(\alpha+\beta K)\,.\]
It immediately follows that, if $r$ is holomorphic in $\OO_J^\geqslant$ and if $s$ is holomorphic in $\OO_K^\geqslant$, then $p^I$ is holomorphic in $\OO_I^\geqslant$.
\end{proof}

The next result will be extremely useful in the sequel. It follows from the classical Schwarz Reflection Principle if we identify $\rr_\cc$ with $\cc$ and $\hh_\cc$ with $\cc^4$. We include a direct proof for the sake of completeness. We recall that $\overline{x+\ui y}=x-\ui y$ for all $x,y\in\hh$ and we set, for all $D\subseteq\rr_\cc$, the notations
\begin{align*}
D^\geqslant&:=\{\alpha+\ui\beta\in D : \alpha,\beta\in\rr,\beta\geq0\}\,,\\
D^\leqslant&:=\{\alpha+\ui\beta\in D : \alpha,\beta\in\rr,\beta\leq0\}\,,\\
D^>&:=\{\alpha+\ui\beta\in D : \alpha,\beta\in\rr,\beta>0\}\,,\\
D^<&:=\{\alpha+\ui\beta\in D : \alpha,\beta\in\rr,\beta<0\}\,.
\end{align*}

\begin{lemma}[Schwarz reflection principle for stem functions]\label{lem:schwarzreflection}
Let $D$ be an open subset of $\rr_\cc$, symmetric with respect to conjugation $z\mapsto\bar z$. Assume $F:D^\geqslant\to\hh_\cc$ to be a holomorphic function with $\overline{F(\alpha)}=F(\alpha)$ for all $\alpha\in D\cap\rr$ (if any). Setting $F(\bar z):=\overline{F(z)}$ for all $z\in D^\geqslant$ defines a holomorphic stem function $D\to\hh_\cc$.
\end{lemma}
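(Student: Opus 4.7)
There are three things to verify: (i) the formula is a well-defined prescription on $D=D^\geqslant\cup D^\leqslant$, (ii) the resulting function satisfies the stem identity $F(\bar z)=\overline{F(z)}$ throughout $D$, and (iii) it is holomorphic on all of $D$, including across the real axis $D\cap\rr$. The plan is to dispose of (i) and (ii) formally and then to reduce (iii) to the classical scalar Schwarz reflection principle by passing to components with respect to a splitting basis of $\hh$.

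For (i), the only consistency check is on the overlap $D^\geqslant\cap D^\leqslant=D\cap\rr$, where the reflection prescription reads $F(\alpha):=\overline{F(\alpha)}$; this matches the original value exactly because of the reality hypothesis $\overline{F(\alpha)}=F(\alpha)$ on $D\cap\rr$. For (ii), the identity holds on $D^\geqslant$ by definition, and on $D^\leqslant$ it follows by applying conjugation to the defining relation and using that $x+\ui y\mapsto x-\ui y$ is an involution.

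To tackle (iii), I would fix a splitting basis $\{1,I,J,IJ\}$ of $\hh$ provided by the Splitting property. Since $\ui$ is central in $\hh_\cc$, this basis yields an $\rr_\cc$-module decomposition $\hh_\cc=\rr_\cc\oplus\rr_\cc\,I\oplus\rr_\cc\,J\oplus\rr_\cc\,IJ$, and the conjugation of $\hh_\cc$ acts componentwise as the conjugation of $\rr_\cc$ on each coordinate. Writing $F=F^{(1)}+F^{(2)}I+F^{(3)}J+F^{(4)}IJ$ with $F^{(k)}\colon D^\geqslant\to\rr_\cc$, the holomorphy of $F$ is equivalent to the holomorphy of each $F^{(k)}$ as a map $(D^\geqslant,\ui)\to(\rr_\cc,\ui)$, and the reality hypothesis becomes $F^{(k)}(D\cap\rr)\subseteq\rr$. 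Under the canonical biholomorphism $(\rr_\cc,\ui)\cong(\cc,i)$ (compatible with the restrictions of the maps $\phi_I$ discussed in Remark~\ref{rmk:phiholomorphic}), each $F^{(k)}$ falls squarely under the hypotheses of the classical scalar Schwarz reflection principle on the symmetric complex domain $D$, which produces a holomorphic extension by reflection. Reassembling the four extensions recovers a holomorphic $F\colon D\to\hh_\cc$, and the quaternionic reflection formula matches the componentwise one.

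The main obstacle is not mathematical depth but bookkeeping: one must keep careful track of the two distinct complex units in play---the central $\ui$ of $\hh_\cc$ and the unit $i$ sitting in the embedded $\cc\subset\hh\subset\hh_\cc$---and verify that the splitting isomorphism $\hh_\cc\cong(\rr_\cc)^4$ is an isomorphism of complex vector spaces for the left $\ui$-action, so that componentwise holomorphy is genuinely equivalent to holomorphy of $F$. Once that is spelled out, the proof reduces to the classical one-variable statement applied four times in parallel.
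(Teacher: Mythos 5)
Your proof is correct, and it takes the route the paper explicitly acknowledges but does not spell out. In the sentence introducing Lemma~\ref{lem:schwarzreflection}, the authors remark that the result ``follows from the classical Schwarz Reflection Principle if we identify $\rr_\cc$ with $\cc$ and $\hh_\cc$ with $\cc^4$,'' and then state that they include a direct proof for completeness. Your argument supplies precisely that omitted reduction: a splitting basis $\{1,I,J,IJ\}$ of $\hh$ gives a free $\rr_\cc$-module decomposition $\hh_\cc\cong(\rr_\cc)^4$, the central $\ui$ acts diagonally (so componentwise holomorphy is equivalent to holomorphy of $F$), and the conjugation $x+\ui y\mapsto x-\ui y$ acts coordinatewise as the conjugation of $\rr_\cc\cong\cc$; the reality hypothesis then translates into each scalar component being real on $D\cap\rr$, and four applications of the classical reflection principle finish the job. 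The paper's own proof instead avoids any decomposition and verifies holomorphy of the reflected function directly, by computing the limit of the difference quotient at points of $D^\leqslant$ (via the identity $F(z)=\overline{F(\bar z)}$ and the involutivity of conjugation) and at points of $D\cap\rr$ (checking that the one-sided limits from $D^\geqslant$ and $D^\leqslant$ agree). Your reduction buys conceptual economy and leans on a well-known one-variable theorem; the paper's direct computation buys self-containedness and keeps the argument entirely inside the $\hh_\cc$-valued formalism, which they presumably preferred since the same difference-quotient manipulations reappear in nearby proofs. One minor caveat to keep in mind when you say ``falls squarely under the hypotheses'': the paper's convention for holomorphy on $D^\geqslant$ entails extendability to a genuine open neighborhood, which is stronger than the minimal continuity-on-the-closure hypothesis of the sharpest form of the classical reflection principle, so you are using the classical theorem under hypotheses that are comfortably more than sufficient.
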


\begin{proof}
Since $F$ is holomorphic in $D^\geqslant$, it holds
\[\lim_{D^\geqslant\ni z\to z_0}\frac{F(z)-F(z_0)}{z-z_0}=\frac{\partial F}{\partial\alpha}(z_0)\]
for each $z_0\in D^\geqslant$. For $z_0\in D^\leqslant$, we get
\begin{align*}
\lim_{D^\leqslant\ni z\to z_0}\frac{F(z)-F(z_0)}{z-z_0}&=\lim_{D^\leqslant\ni z\to z_0}\frac{\overline{F(\bar z)}-\overline{F(\bar z_0)}}{z-z_0}\\
&=\lim_{D^\leqslant\ni z\to z_0}\overline{\left(\frac{F(\bar z)-F(\bar z_0)}{\bar z-\bar z_0}\right)}\\
&=\lim_{D^\geqslant\ni w\to\bar z_0}\overline{\left(\frac{F(w)-F(\bar z_0)}{w-\bar z_0}\right)}\\
&=\overline{\left(\frac{\partial F}{\partial\alpha}(\bar z_0)\right)}=\frac{\partial}{\partial\alpha} \overline{F(\bar z)}_{|_{z=z_0}}\\
&=\frac{\partial F}{\partial\alpha}(z_0)\,.
\end{align*}
For all $\alpha_0\in D\cap\rr$, it holds
\[\lim_{D^\leqslant\ni z\to \alpha_0}\frac{F(z)-F(\alpha_0)}{z-\alpha_0}=\frac{\partial F}{\partial\alpha}(\alpha_0)=\lim_{D^\geqslant\ni z\to \alpha_0}\frac{F(z)-F(\alpha_0)}{z-\alpha_0}\,.\]
Overall, we have proven that, for each $z_0\in D$,
\[\lim_{D\ni z\to z_0}\frac{F(z)-F(z_0)}{z-z_0}=\frac{\partial F}{\partial\alpha}(z_0)\,,\]
whence $F$ is globally a holomorphic map $(D,\ui)\to(\hh_\cc,\ui)$.
\end{proof}

We are now ready to associate to each couple of holomorphic functions $r:\OO_J^\geqslant\to\hh$ and $s:\OO_K^\geqslant\to\hh$ a holomorphic stem function.

\begin{theorem}[General extension formula]\label{thm:generalextensionformula}
Let $\OO$ be a symmetric open subset of $\hh$ and take distinct $J,K\in\s$. Consider holomorphic functions $r:\OO_J^\geqslant\to\hh$ and $s:\OO_K^\geqslant\to\hh$ and assume the equality $r(\alpha)=s(\alpha)$ to hold for all $\alpha\in\OO\cap\rr$ (if any). Let us set $D:=\phi_J^{-1}(\OO_J)$ (so that $\OO=\OO_D$). Let us define $F:D\to\hh_\cc$ by setting, for all $z=\alpha+\ui\beta\in D^\geqslant$,
\[F(\alpha+\ui\beta):=(J-K)^{-1} \left[J r(\alpha+\beta J) - K s(\alpha+\beta K)\right] + \ui (J-K)^{-1} \left[r(\alpha+\beta J) - s(\alpha+\beta K)\right]\]
as well as $F(\bar z):=\overline{F(z)}$. Then $F$ is a well-defined holomorphic stem function. It induces an element of $\sr(\OO)$ (whence a slice regular function), namely $f:=\I(F):\OO\to\hh$, such that $r=f_{|_{\OO_J^\geqslant}}$ and $s=f_{|_{\OO_K^\geqslant}}$.
\end{theorem}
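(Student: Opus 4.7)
My plan is to treat $F$ on $D^\geqslant$ as the ``stem function analogue'' of the $p^I$ constructed in Proposition~\ref{prop:holomorphicfamily}, extend it to all of $D$ via Lemma~\ref{lem:schwarzreflection}, and then identify the restrictions of $\I(F)$ using the identifications $p^J=r$ and $p^K=s$ already established in that proposition.

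The first preliminary step is to verify the consistency of $F$ at real points. If $\alpha\in D\cap\rr$, so $\beta=0$, the formula on $D^\geqslant$ collapses to $F(\alpha)=(J-K)^{-1}(J-K)\,r(\alpha)=r(\alpha)$ by the compatibility hypothesis $r(\alpha)=s(\alpha)$; hence $F(\alpha)\in\hh$ and $\overline{F(\alpha)}=F(\alpha)$, which guarantees both that the two definitions of $F$ (on $D^\geqslant$ and via the reflection $F(\overline{z}):=\overline{F(z)}$) agree along $D\cap\rr$ and that the real-symmetry hypothesis of Lemma~\ref{lem:schwarzreflection} is satisfied. The second step is to show that $F$ is holomorphic on $D^\geqslant$. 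The formula defining $F$ is literally the formula defining $p^I$ in Proposition~\ref{prop:holomorphicfamily} with the imaginary unit $I\in\s$ replaced by the central imaginary $\ui\in\hh_\cc$. Since $\ui$ commutes with every element of $\hh_\cc$, in particular with $J$, $K$, and with the values of $r,s$, the same algebraic manipulation carried out in the proof of that proposition yields
\[
\debar_\ui F(\alpha+\ui\beta)=\bigl[(J-K)^{-1}J+\ui(J-K)^{-1}\bigr]\debar_J r(\alpha+\beta J)-\bigl[(J-K)^{-1}K+\ui(J-K)^{-1}\bigr]\debar_K s(\alpha+\beta K),
\]
which vanishes by the holomorphicity of $r$ and $s$. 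Lemma~\ref{lem:schwarzreflection} then produces the desired holomorphic stem function $F:D\to\hh_\cc$, and by the definition of $\sr$ the induced slice function $f:=\I(F)$ belongs to $\sr(\OO)$; by the remark at the end of Subsection~\ref{subsec:slicefunctions}, $f$ is slice regular.

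It remains to match the restrictions. For every $I\in\s$, the axial symmetry of $\OO$ implies that $\phi_I$ restricts to a bijection $D^\geqslant\to\OO_I^\geqslant$; moreover, direct inspection of the two formulas gives $\phi_I(F(\alpha+\ui\beta))=p^I(\alpha+\beta I)$ for every $\alpha+\ui\beta\in D^\geqslant$, where $p^I$ is the function of Proposition~\ref{prop:holomorphicfamily}. Combining this with the defining identity $f\circ\phi_I=\phi_I\circ F$ yields $f|_{\OO_I^\geqslant}=p^I$ for every $I\in\s$; specializing to $I=J$ and $I=K$ and invoking $p^J=r$ and $p^K=s$ produces the claimed equalities. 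I do not foresee a genuine obstacle: the only mildly delicate point is transferring the Cauchy--Riemann computation of Proposition~\ref{prop:holomorphicfamily} from $I\in\s$ to $\ui\in\hh_\cc$, which is automatic, and actually easier, because $\ui$ lies in the center of $\hh_\cc$ so every commutation used in that proof becomes unconditional.
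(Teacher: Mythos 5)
Your argument reproduces the paper's proof: both establish real-point compatibility and holomorphy of $F$ on $D^\geqslant$, invoke Lemma~\ref{lem:schwarzreflection} to complete $F$ to a holomorphic stem function on $D$, and then identify $f|_{\OO_I^\geqslant}$ with the $p^I$ of Proposition~\ref{prop:holomorphicfamily} to conclude $r=p^J=f|_{\OO_J^\geqslant}$ and $s=p^K=f|_{\OO_K^\geqslant}$. The only cosmetic difference is that the paper writes out the $\debar F$ computation directly rather than deriving it by the substitution $I\leadsto\ui$ in Proposition~\ref{prop:holomorphicfamily}, but the two are the same calculation.
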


\begin{proof}
The function $F$ is clearly well-defined in $D^\geqslant$. It is also holomorphic in $D^\geqslant$ because
\begin{align*}
\debar F(\alpha+\ui\beta)=\,&[(J-K)^{-1}J+\ui(J-K)^{-1}]\,\debar_J r(\alpha+\beta J)\\
&-[(J-K)^{-1}K+\ui(J-K)^{-1}]\,\debar_K s(\alpha+\beta K)\equiv0\,.
\end{align*}
Moreover, for all $\alpha\in\OO\cap\rr$, it holds
\begin{align*}
F(\alpha)=\,& (J-K)^{-1} \left[J r(\alpha) - K s(\alpha)\right] + \ui (J-K)^{-1} \left[r(\alpha) - s(\alpha)\right]\\
=\,& (J-K)^{-1}(J-K)r(\alpha)+\ui(J-K)^{-1}[r(\alpha)-r(\alpha)]\\
=\,& r(\alpha)\,.
\end{align*}
Now, $\overline{r(\alpha)}=r(\alpha)$ because the map $\overline{x+\ui y}=x-\ui y$ preserves all $x\in\hh$. Thus, $\overline{F(\alpha)}=F(\alpha)$ for all $\alpha\in D\cap\rr$. By applying Lemma~\ref{lem:schwarzreflection}, we get that the function $F:D\to\hh_\cc$ is a well-defined holomorphic stem function, as desired.

If we set $f:=\I(F)$, then for all $I\in\s$ the restriction $f_{|_{\OO_I^\geqslant}}$ is the holomorphic map $p^I:\OO_I^\geqslant\to\hh$ of Proposition~\ref{prop:holomorphicfamily}. In particular, $r=p^J=f_{|_{\OO_J^\geqslant}}$ and $s=p^K=f_{|_{\OO_K^\geqslant}}$, as desired.
\end{proof}

\subsection{Double-index slice regular family}\label{subsec:double-index}

The work done in the previous subsection allows us to associate to each slice regular function a family of holomorphic stem functions depending on two indices $J,K\in\s$. We recall that, for every $E\subseteq\rr_\cc$, the symbol $\overline{E}$ denotes the image of $E$ through conjugation $z\mapsto\bar z$.

\begin{figure}[htbp]
\centering
\includegraphics[height=4cm]{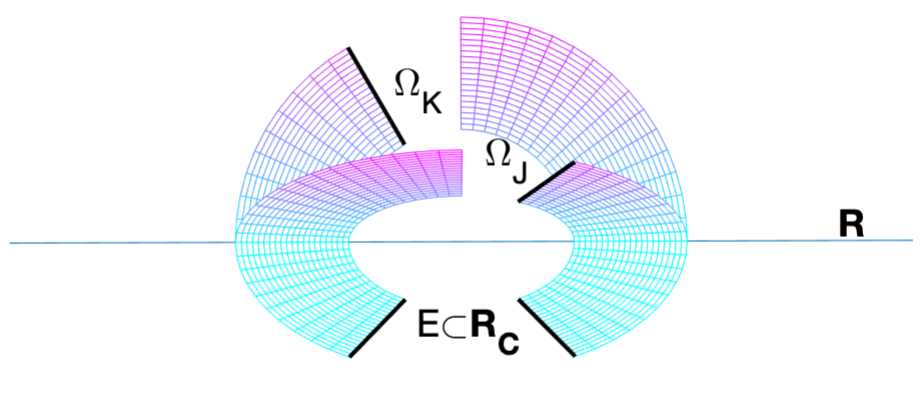}
\caption{An example of $E:=\phi_J^{-1}(\OO_J^\geqslant)\cap\phi_K^{-1}(\OO_K^\geqslant)$.}\label{fig:setE}
\end{figure}

\begin{definition}\label{def:double-index}
Let $\OO\subseteq\hh$ be a domain and let $f:\OO\to\hh$ be a slice regular function. For each choice of distinct $J,K\in\s$, we set
\[E:=\phi_J^{-1}(\OO_J^\geqslant)\cap\phi_K^{-1}(\OO_K^\geqslant),\quad D^{J,K}:=E\cup\overline{E},\quad \OO^{J,K}:=\OO_{D^{J,K}}\,.\]
If $E$ (whence $D^{J,K}$ and $\OO^{J,K}$) is not empty, we can apply Theorem~\ref{thm:generalextensionformula} to $r:=f_{|_{\phi_J(E)}}$ and to $s:=f_{|_{\phi_K(E)}}$ to get a holomorphic stem function $F^{J,K}:D^{J,K}\to\hh_\cc$, defined by setting, for all $z=\alpha+\ui\beta\in E$,
\[F^{J,K}(\alpha+\ui\beta):=(J-K)^{-1} \left[J f(\alpha+\beta J) - K f(\alpha+\beta K)\right] + \ui (J-K)^{-1} \left[f(\alpha+\beta J) - f(\alpha+\beta K)\right]\]
as well as $F^{J,K}(\bar z):=\overline{F^{J,K}(z)}$. The family $\{F^{J,K}\}_{J,K\in\s, J\neq K, D^{J,K}\neq\emptyset}$ is called the \emph{double-index holomorphic stem family associated to $f$}. The \emph{double-index slice regular family associated to $f$} is the family $\{f^{J,K}\}_{J,K\in\s, J\neq K,\OO^{J,K}\neq\emptyset}$ of the elements $f^{J,K}=\I(F^{J,K})$ of $\sr(\OO^{J,K})$ induced by the $F^{J,K}$'s.
\end{definition}

We will soon study the double-index families associated to $f$. Before doing so, we need the next definition and result.

\begin{definition}\label{def:spine}
Let $\OO$ be an open subset of $\hh$. The \emph{spine} of $\OO$ is the open subset $\spine(\OO)$ of $\hh$ defined as the union of all quaternionic open balls $B(\alpha_0,R)$ with $\alpha_0\in\OO\cap\rr$ (if any) and $R>0$ such that $B(\alpha_0,R)\subseteq\OO$. We define $\spine_{\rr_\cc}(\OO)$ as the subset $D$ of $\rr_\cc$, preserved by conjugation $z\mapsto\bar z$, whose circularization $\OO_D$ equals $\spine(\OO)$.
\end{definition}

\begin{remark}
If we adopt the notations of Definition~\ref{def:double-index}, then for all distinct $J,K\in\s$: if $D^{J,K}$ is not empty, then it includes the set $\spine_{\rr_\cc}(\OO)$. It follows that $\spine(\OO)$ is contained in the circularization $\OO^{J,K}$ of $D^{J,K}$, which is the domain of $f^{J,K}$.
\end{remark}

We are now ready for the announced study.

\begin{proposition}\label{prop:doubleindexstemfamily}
Let $\OO\subseteq\hh$ be a domain and let $f:\OO\to\hh$ be a slice regular function. Let $\{F^{J,K}:D^{J,K}\to\hh_\cc\}_{J,K\in\s, J\neq K,D^{J,K}\neq\emptyset}$ and $\{f^{J,K}:\OO^{J,K}\to\hh\}_{J,K\in\s, J\neq K, \OO^{J,K}\neq\emptyset}$ denote the double-index families associated to $f$. Fix $J,K,J',K'\in\s$ with $J\neq K,J'\neq K'$ and $\OO^{J,K}\neq\emptyset\neq\OO^{J',K'}$.
\begin{enumerate}
\item The function $f^{J,K}$ coincides with $f$ in $(\OO^{J,K})_J^\geqslant$ and in $(\OO^{J,K})_K^\geqslant$.
\item The function $f^{J,K}$ coincides with $f$ in $\spine(\OO)$.  Moreover, $f$ and $f^{J,K}$ coincide with $F^{J,K}$ in $\OO\cap\rr=(\OO^{J,K})\cap\rr=D^{J,K}\cap\rr$.
\item The function $F^{J',K'}$ coincides with $F^{J,K}$ in $\spine_{\rr_\cc}(\OO)$.
\item For each connected open subset $C$ of $D^{J',K'}\cap D^{J,K}$ that includes a real point (if any), the function $F^{J',K'}$ coincides with $F^{J,K}$ in $C$ and the function $f^{J',K'}$ coincides with $f^{J,K}$ in $\OO_C$.
\end{enumerate}
\end{proposition}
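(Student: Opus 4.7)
My plan is to prove the four items in the order given, since each builds on the previous ones.

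For item (1), I would begin by matching sets. Recall $E=\phi_J^{-1}(\OO_J^\geqslant)\cap\phi_K^{-1}(\OO_K^\geqslant)$. Since $\phi_J^{-1}(\cc_J^\geqslant)=\rr_\cc^\geqslant$, we have $E\subseteq\rr_\cc^\geqslant$, whence $(D^{J,K})^\geqslant=E$ and consequently $(\OO^{J,K})_J^\geqslant=\phi_J(E)$ and $(\OO^{J,K})_K^\geqslant=\phi_K(E)$. By construction, $F^{J,K}$ is precisely the stem function produced by Theorem~\ref{thm:generalextensionformula} applied to $r:=f_{|_{\phi_J(E)}}$ and $s:=f_{|_{\phi_K(E)}}$. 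That theorem therefore delivers item (1) immediately.

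For item (2), the key tool is Lemma~\ref{lem:representationball}. Given $\alpha_0\in\OO\cap\rr$ and $R>0$ with $B(\alpha_0,R)\subseteq\OO$, a translated version of the lemma provides, for each $(\alpha,\beta)$ with $(\alpha-\alpha_0)^2+\beta^2<R^2$, quaternions $b,c$ depending only on $(\alpha,\beta)$ such that $f(\alpha+\beta I)=b+Ic$ for every $I\in\s$. Substituting $I=J,K$ into the defining formula for $F^{J,K}$ and using $J^2=K^2=-1$, the two bracketed expressions collapse to $(J-K)b$ and $(J-K)c$ respectively, giving $F^{J,K}(\alpha+\ui\beta)=b+\ui c$. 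Applying $\phi_I$ yields $f^{J,K}(\alpha+\beta I)=b+Ic=f(\alpha+\beta I)$, so $f^{J,K}=f$ on each such ball, hence on $\spine(\OO)$. The statement about $\OO\cap\rr$ is the specialization $\beta=0$, already carried out inside the proof of Theorem~\ref{thm:generalextensionformula}, together with the fact that slice functions agree with their stem functions at real points. The equality $\OO\cap\rr=\OO^{J,K}\cap\rr=D^{J,K}\cap\rr$ is a short set-theoretic check: a real $\alpha\in D^{J,K}$ belongs to $E$ and hence to $\phi_J^{-1}(\OO_J)$, giving $\alpha\in\OO$; conversely, any $\alpha\in\OO\cap\rr$ has a ball neighborhood in $\OO$, which places $\alpha$ in $\phi_J^{-1}(\OO_J^\geqslant)\cap\phi_K^{-1}(\OO_K^\geqslant)=E\subseteq D^{J,K}$.

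Item (3) is then essentially free: the expression $b+\ui c$ obtained in item (2) does not involve $J$ or $K$, so $F^{J,K}$ and $F^{J',K'}$ coincide on every $\alpha+\ui\beta$ coming from a ball in $\spine(\OO)$, i.e.\ on $\spine_{\rr_\cc}(\OO)^\geqslant$. Both stem functions are defined on $\spine_{\rr_\cc}(\OO)$ by the Remark preceding the statement, and the Schwarz-reflected definitions $F^{J,K}(\bar z):=\overline{F^{J,K}(z)}$ and $F^{J',K'}(\bar z):=\overline{F^{J',K'}(z)}$ propagate the equality from the upper half to all of $\spine_{\rr_\cc}(\OO)$.

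For item (4), I would invoke the identity principle for holomorphic maps $(C,\ui)\to(\hh_\cc,\ui)$. A real point $\alpha_0\in C\subseteq D^{J,K}\cap D^{J',K'}$ lies in $\OO\cap\rr$ by the characterization just used, so it has a quaternionic ball neighborhood in $\OO$ whose $\rr_\cc$-trace is an open ball around $\alpha_0$ contained in $\spine_{\rr_\cc}(\OO)$. Intersecting with $C$ yields a non-empty open subset of the connected open set $C$ on which $F^{J,K}$ and $F^{J',K'}$ agree by item (3), so the identity principle forces equality on all of $C$; applying the functor $\I$ transports this to $f^{J,K}=f^{J',K'}$ on $\OO_C$. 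The main obstacle I anticipate is not conceptual but bookkeeping: keeping the sets $E,D^{J,K},\OO^{J,K},\spine(\OO),\spine_{\rr_\cc}(\OO)$ and their various intersections straight well enough to invoke Theorem~\ref{thm:generalextensionformula} and Lemma~\ref{lem:representationball} on the correct pieces, particularly when verifying that $\spine_{\rr_\cc}(\OO)$ is genuinely a neighborhood of each real point of $D^{J,K}\cap D^{J',K'}$.
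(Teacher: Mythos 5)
Your proposal is correct and follows essentially the same route as the paper's proof: item (1) via Theorem~\ref{thm:generalextensionformula}, item (2) via Lemma~\ref{lem:representationball} on a translated ball, item (3) as a direct consequence, and item (4) via the identity principle for holomorphic maps $(C,\ui)\to(\hh_\cc,\ui)$. The only small point you gloss over is that, to conclude $f^{J,K}=f^{J',K'}$ on $\OO_C$ in item (4), you should first note that agreement of the stem functions on $C$ propagates to $C\cup\overline{C}$ via the stem-function symmetry $F(\bar z)=\overline{F(z)}$, since $\I$ requires a stem function on a conjugation-invariant domain; the paper makes this step explicit.
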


\begin{proof}
\begin{enumerate}
\item By Theorem~\ref{thm:generalextensionformula}, the induced function $f^{J,K}$ coincides with $f$ in $(\OO^{J,K})_J^\geqslant$ and in $(\OO^{J,K})_K^\geqslant$.
\item Take any $\alpha_0\in D^{J,K}\cap\rr$. The equality $F^{J,K}(\alpha_0)=f(\alpha_0)$ follows by direct inspection in the definition of $F^{J,K}$. The equality $f(\alpha_0)=f^{J,K}(\alpha_0)$ is a special case of property {\it 1.} Now let $R>0$ be such that $B(\alpha_0,R)\subseteq\OO$ (whence $B(\alpha_0,R)\subseteq\OO^{J,K}$). Up to precomposing the function $f$ with the real translation $x\mapsto x+\alpha_0$, we may suppose $\alpha_0=0$. Lemma~\ref{lem:representationball} implies that
\begin{align*}
&f(\alpha+\beta I)=\\
&=(J-K)^{-1} \left[J f(\alpha+\beta J) - K f(\alpha+\beta K)\right] + I (J-K)^{-1} \left[f(\alpha+\beta J) - f(\alpha+\beta K)\right]\\
&=f^{J,K}(\alpha+\beta I)
\end{align*}
for all $\alpha+\beta I\in B(0,R)$. The thesis now follows from the definition of $\spine(\OO)$.
\item By property {\it 2.}, $f^{J,K}=\I(F^{J,K})$ and $f^{J',K'}=\I(F^{J',K'})$ both coincide with $f$ in $\spine(\OO)$. It follows that $F^{J,K}$ coincides with $F^{J',K'}$ in $\spine_{\rr_\cc}(\OO)$.
\item Let $C$ be any connected open subset of $D^{J,K}\cap D^{J',K'}$ with $C\cap\rr\neq\emptyset$ (whence $C\cap\spine_{\rr_\cc}(\OO)$ is a nonempty open subset of $\rr_\cc$). Now, $F^{J,K}_{|_C}$ and $F^{J',K'}_{|_C}$ are holomorphic maps $(C,\ui)\to(\hh_\cc,\ui)$ coinciding in $C\cap\spine_{\rr_\cc}(\OO)$ by property {\it 3.} Since $C$ is connected, the holomorphic maps $F^{J,K}_{|_C}$ and $F^{J',K'}_{|_C}$ must coincide throughout $C$. Since $F^{J,K}$ and $F^{J',K'}$ are stem functions, it follows that they coincide in $C\cup\overline{C}\subseteq D^{J,K}\cap D^{J',K'}$. As a consequence, $f^{J,K}=\I(F^{J,K})$ and $f^{J',K'}=\I(F^{J',K'})$ coincide in the circularization $\OO_C$ of $C$.
\qedhere
\end{enumerate}
\end{proof}

As a byproduct, we obtain a new proof of two well-known results: the General Representation Formula for symmetric slice domains, proven in~\cite{advancesrevised}, and the fact, proven in~\cite{perotti}, that the class of slice regular functions on a symmetric slice domain $\OO_D$ coincides with $\sr(\OO_D)$ (whence is included in the class of real analytic functions). We merge all of these results into a single statement.

\begin{theorem}[Global representation formula for symmetric slice domains]\label{thm:generalrepresentationformula}
Let $\OO_D$ be a symmetric slice domain in $\hh$ and let $f:\OO_D\to\hh$ be a slice regular function. Then $f\in\sr(\OO_D)$ and, in particular, $f$ is real analytic. Moreover, for all distinct $J,K\in\s$, it holds $f=f^{J,K}$ and
\[f(\alpha+\beta I)=(J-K)^{-1} \left[J f(\alpha+\beta J) - K f(\alpha+\beta K)\right] + I (J-K)^{-1} \left[f(\alpha+\beta J) - f(\alpha+\beta K)\right]\]
for all $I\in\s$ and all $\alpha,\beta\in\rr$ with $\beta\geq0$ such that $\alpha+\beta I\in\OO_D$.
\end{theorem}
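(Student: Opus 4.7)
The plan is to show that when $\OO_D$ is a symmetric slice domain, the construction of Definition~\ref{def:double-index} collapses to something very concrete, and then to identify $f$ with $f^{J,K}$ via an identity-principle argument on each slice.

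First I would check that, for any distinct $J,K\in\s$, the set $\OO^{J,K}$ coincides with $\OO_D$. Since $\OO_D$ is symmetric, one has $\OO_J=\phi_J(D)$ and hence $\phi_J^{-1}(\OO_J^{\geqslant})=D^{\geqslant}$; analogously $\phi_K^{-1}(\OO_K^{\geqslant})=D^{\geqslant}$. Therefore $E=D^{\geqslant}$, $D^{J,K}=D^{\geqslant}\cup\overline{D^{\geqslant}}=D$ (because $D$ is invariant under conjugation), and $\OO^{J,K}=\OO_D$. So Theorem~\ref{thm:generalextensionformula} produces a holomorphic stem function $F^{J,K}:D\to\hh_\cc$, and $f^{J,K}=\I(F^{J,K})\in\sr(\OO_D)$.

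Next I would prove $f=f^{J,K}$ on $\OO_D$. Because $D$ is a connected open subset of $\rr_\cc$ meeting $\rr$, the set $\OO_D\cap\rr$ is nonempty, so $\spine(\OO_D)$ is a nonempty open subset of $\OO_D$. By Proposition~\ref{prop:doubleindexstemfamily}(2), $f$ and $f^{J,K}$ agree on $\spine(\OO_D)$. Now fix $I\in\s$. Since $\phi_I:\cc\to\cc_I$ is a homeomorphism and $D$ is connected, the ``slice'' $(\OO_D)_I=\phi_I(D)$ is a connected open subset of $\cc_I$, and $\spine(\OO_D)\cap\cc_I$ is a nonempty open subset of it. The restrictions $f_I$ and $f^{J,K}_I$ are both holomorphic maps $((\OO_D)_I,I)\to(\hh,I)$. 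After choosing a splitting basis $\{1,I,J',IJ'\}$ and applying the splitting lemma to reduce to $\cc_I$-valued components, the classical identity principle forces $f_I=f^{J,K}_I$ throughout $(\OO_D)_I$. Since this holds for every $I\in\s$, and since $\OO_D=\bigcup_{I\in\s}(\OO_D)_I$, I conclude $f=f^{J,K}$ on $\OO_D$.

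The first two assertions of the theorem follow at once: $f=f^{J,K}=\I(F^{J,K})$ lies in $\sr(\OO_D)$, and the real analyticity is inherited from the real analyticity of the holomorphic stem function $F^{J,K}$ composed with the real analytic map $\phi_I$. The representation formula follows by unwinding the definition of $F^{J,K}$: for $\alpha,\beta\in\rr$ with $\beta\geq 0$ and $\alpha+\ui\beta\in D^{\geqslant}=E$, one has $F^{J,K}(\alpha+\ui\beta)=x+\ui y$ where $x,y$ are the explicit quaternionic expressions in the definition, and $f(\alpha+\beta I)=\phi_I(F^{J,K}(\alpha+\ui\beta))=x+Iy$ gives exactly the claimed identity.

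The main obstacle I anticipate is psychological rather than technical: one needs to be confident in applying the identity principle \emph{separately on each slice}, since global connectedness of $\OO_D$ is not what is used — rather, connectedness of each slice $(\OO_D)_I$, which in turn rests on the connectedness of $D$. Once this is pinned down, the rest is bookkeeping between the stem level ($F^{J,K}$ on $D$) and the quaternionic level ($f^{J,K}$ on $\OO_D$).
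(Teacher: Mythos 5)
Your proof is correct, and it reaches the goal by a genuinely different combination of the ingredients from Proposition~\ref{prop:doubleindexstemfamily}. The paper combines part~\emph{4.} (all the $F^{J,K}$ coincide on $D^{J,K}\cap D^{J',K'}=D$, because $D$ is a connected open set meeting $\rr$) with part~\emph{1.} (each $f^{J,K}$ agrees with $f$ on $(\OO_D)_J^\geqslant$), so that the common function $g$ agrees with $f$ on $(\OO_D)_J^\geqslant$ for \emph{every} $J\in\s$, forcing $g=f$ on $\OO_D=\bigcup_J(\OO_D)_J^\geqslant$ without ever invoking the identity principle explicitly. You instead use only part~\emph{2.} (agreement of $f$ and $f^{J,K}$ on $\spine(\OO_D)$) and then run a slicewise identity-principle argument: for each fixed $I$, both $f_I$ and $f^{J,K}_I$ are holomorphic on the connected slice $(\OO_D)_I$ and agree on the nonempty open set $\spine(\OO_D)\cap\cc_I$, hence agree everywhere. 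Both routes are valid and of comparable length; the paper's is marginally slicker because it sidesteps the identity principle at the slice level, leaning on the fact (established once and for all in Prop.~\ref{prop:doubleindexstemfamily}.\emph{4}) that the stem functions themselves all coincide on $D$. Your version has the compensating advantage of making the identity-principle mechanism completely explicit, which you correctly flag as the conceptual crux: what is needed is connectedness of each slice $(\OO_D)_I$, which follows from connectedness of $D$.
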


\begin{proof}
We assume, without loss of generality, $D$ to be preserved by conjugation $z\mapsto\bar z$. Let $\{F^{J,K}\}_{J,K\in\s, J\neq K}$ denote the double-index holomorphic stem family associated to $f:\OO_D\to\hh$. Since $\OO_D$ is a symmetric slice domain, all domains $D^{J,K}$ (whence all intersections $D^{J,K}\cap D^{J',K'}$) coincide with $D$. By hypothesis, the latter set is connected and intersects the real axis. By property {\it 4.} in Proposition~\ref{prop:doubleindexstemfamily}, all $f^{J,K}:\OO_D\to\hh$ coincide and form a single element $g$ of $\sr(\OO_D)$. Moreover, property {\it 1.} in the same theorem guarantees that $g$ coincides with $f$ in $(\OO_D)_J^\geqslant$ for all $J\in\s$, whence $g=f$. As a consequence, the formula
\[f(\alpha+\beta I)=(J-K)^{-1} \left[J f(\alpha+\beta J) - K f(\alpha+\beta K)\right] + I (J-K)^{-1} \left[f(\alpha+\beta J) - f(\alpha+\beta K)\right]\]
holds for all $\alpha+\beta I\in\OO_D$ with $\alpha,\beta\in\rr,\beta\geq0$ and $I\in\s$.
\end{proof}


\section{Speared domains and local representation formula}\label{sec:localrepresentationformula}

\subsection{Speared domains and their cores}

Let us introduce some new terminology.

\begin{definition}\label{def:speared}
A \emph{speared} open subset of $\rr_\cc$ is an open $T\subseteq\rr_\cc$ whose connected components all intersect the real axis.
A \emph{speared} open subset of $\hh$ is an open $U\subseteq\hh$ such that, for all $I\in\s$, each connected component of $U^{\geqslant}_I$ includes a real point. A \emph{speared domain} is a domain $\OO$ of $\hh$ that is a speared open subset of $\hh$.
\end{definition}

We remark that, if $\OO\subseteq\hh$ is a speared domain, then $\OO\cap\rr\neq\emptyset$. In particular, the sets $D^{J,K}$ and $\OO^{J,K}$ of Definition~\ref {def:double-index} are nonempty for each choice of distinct $J,K\in\s$.

As we will see in the forthcoming Section~\ref{sec:classesofdomains}, the class of speared open sets is a rather broad extension of the class of slice domains, whose definition we recall from~\cite[Definition 1.12]{librospringer2ed}.

\begin{definition}\label{def:slicedomain}
Let $\OO$ be a domain in $\hh$. Then $\OO$ is called a \emph{slice domain} if it intersects the real axis $\rr$ and if, for all $J\in\s$, the ``slice'' $\OO_J$ is connected.
\end{definition}

However, if a speared open subset $U$ of $\hh$ happens to be symmetric, we are only slightly generalizing the notion of symmetric slice domain: indeed, like every other symmetric open subset of $\hh$, $U$ is a disjoint union of symmetric slice domains or product domains. Now, every symmetric slice domain is speared and every product domain is not. It follows that our $U$ is a disjoint union of symmetric slice domains. By applying Theorem~\ref{thm:generalrepresentationformula} in each connected component of $U$, we can make the next remark.

\begin{remark}\label{rmk:union}
Let $\OO_D$ be a symmetric speared open subset of $\hh$ and let $f:\OO_D\to\hh$ be a slice regular function. Then $f\in\sr(\OO_D)$. Moreover, for all distinct $J,K\in\s$, it holds $f=f^{J,K}$ and
\[f(\alpha+\beta I)=(J-K)^{-1} \left[J f(\alpha+\beta J) - K f(\alpha+\beta K)\right] + I (J-K)^{-1} \left[f(\alpha+\beta J) - f(\alpha+\beta K)\right]\]
for all $I\in\s$ and all $\alpha,\beta\in\rr$ with $\beta\geq0$ such that $\alpha+\beta I\in\OO_D$.
\end{remark}

If $U$ is a general open subset of $\hh$, we will still be able to apply Remark~\ref{rmk:union} to an appropriately chosen symmetric speared open subset of $U$. This choice will be possible after some preliminary steps. We begin with the next remark, which uses the fact that every open subset of $\rr_\cc$ or $\rr_\cc^\geqslant$ is locally path-connected.

\begin{remark}\label{rmk:path}
An open subset $U$ of $\hh$ is speared if, and only if, for all $I\in\s$ and for all $x\in U_I^{\geqslant}$, there exists a path $\gamma:[0,1]\to\phi_I^{-1}(U_I^{\geqslant})$ such that $\gamma(0)\in\rr$ and $\phi_I(\gamma(1))=x$. The latter property is equivalent to assuming the relative $0$-homology group $H_0(U_I^{\geqslant},U\cap\rr;\zz)$ to be trivial.

An open subset $T$ of $\rr_\cc$ is speared if, and only if, for all $z\in T$, there exists a path $\gamma:[0,1]\to T$ such that $\gamma(0)\in\rr$ and $\gamma(1)=z$. Up to restricting $\gamma$, we can assume the support of $\gamma$ to be entirely contained in $\rr_\cc^\geqslant$ or in $\rr_\cc^\leqslant$.
\end{remark}

For every open subset $U$ of $\hh$, by Definition~\ref{def:spine}, the symmetric set $\spine(U)$ is a speared open subset of $\hh$ (nonempty if $U\cap\rr\neq\emptyset$). Remark~\ref{rmk:path} implies that any union of speared open sets of $\hh$ is speared. This justifies the next definition.

\begin{definition}
Let $U$ be an open subset of $\hh$. The \emph{core} of $U$, denoted as $\core(U)$, is the largest symmetric speared open subset of $U$. We define $\core_{\rr_\cc}(U)$ to be the open subset $D\subseteq\rr_\cc$, preserved by conjugation $z\mapsto\bar z$, such that $\core(U)=\OO_D$.
\end{definition}

Clearly, $\core(\emptyset)=\emptyset$. The core of a nonempty open subset $U\subseteq\hh$ can be obtained as follows. Let us first set $D:=\bigcap_{J\in\s}\phi_J^{-1}(U_J^\geqslant)$: then the circularization $\OO_D$ is the largest symmetric open subset of $U$. This is a consequence of the following facts:
\begin{itemize}
\item each $2$-sphere (or singleton) $x+y\s$ is contained in $\OO_D$ if, and only if, it is contained in $U$;
\item the open set $U$ contains $x+y\s$ (which is compact) if, and only if, it contains a symmetric neighborhood of $x+y\s$.
\end{itemize}
Now, $\OO_D$ is a disjoint union of symmetric slice domains or product domains. The set $\core(U)$ is the union of all connected components of $\OO_D$ that are symmetric slice domains. Clearly, $\spine(U)$ and $\core(U)$ are nested symmetric speared open subsets of $U$. They are nonempty if, and only if, $U\cap\rr\neq\emptyset$ (which is true if $U$ is a nonempty speared open subset of $\hh$). In the forthcoming Example~\ref{ex:speared}, we will construct a speared domain $\underline{\OO}$ with $\emptyset\neq\spine(\underline{\OO})\subsetneq\core(\underline{\OO})\subsetneq\underline{\OO}$: see Figure~\ref{fig:spinecore}.

We can now apply Remark~\ref{rmk:union} to $\OO_D=\core(U)$ to prove the next lemma.

\begin{lemma}\label{lem:representationincore}
Let $U$ be an open subset of $\hh$ with $U\cap\rr\neq\emptyset$ and let $f:U\to\hh$ be a slice regular function. Then $f_{|_{\core(U)}}\in\sr(\core(U))$. Moreover, for all distinct $J,K\in\s$, it holds $f_{|_{\core(U)}}=f^{J,K}_{|_{\core(U)}}$ and
\[f(\alpha+\beta I)=(J-K)^{-1} \left[J f(\alpha+\beta J) - K f(\alpha+\beta K)\right] + I (J-K)^{-1} \left[f(\alpha+\beta J) - f(\alpha+\beta K)\right]\]
for all $I\in\s$ and all $\alpha,\beta\in\rr$ with $\beta\geq0$ such that $\alpha+\beta I\in\core(U)$. As a consequence, for all $J',K'\in\s$ with $J'\neq K'$, the stem function $F^{J',K'}$ coincides with $F^{J,K}$ in $\core_{\rr_\cc}(U)$.
\end{lemma}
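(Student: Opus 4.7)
The plan is to reduce the statement to a single application of Remark~\ref{rmk:union} to the restriction of $f$ to $\core(U)$. By construction, $\core(U)$ is a symmetric speared open subset of $\hh$, and since slice regularity is a local property, $f|_{\core(U)}$ is slice regular on it. Remark~\ref{rmk:union} immediately delivers $f|_{\core(U)} \in \sr(\core(U))$, together with the identity $f|_{\core(U)} = (f|_{\core(U)})^{J,K}$ and the displayed representation formula — but phrased in terms of the double-index family of $f|_{\core(U)}$, not of $f$ itself.

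To bridge with the double-index family of $f$ on $U$, I would first verify the inclusion $\core_{\rr_\cc}(U) \subseteq D^{J,K}$. For any $\alpha+\ui\beta \in \core_{\rr_\cc}(U)$ with $\beta \geq 0$, the whole sphere $\alpha+\beta\s$ lies in $\core(U) \subseteq U$; in particular $\alpha+\beta J$ and $\alpha+\beta K$ belong to $U$, so $\alpha+\ui\beta \in \phi_J^{-1}(U_J^\geqslant) \cap \phi_K^{-1}(U_K^\geqslant) \subseteq D^{J,K}$. The case $\beta < 0$ follows by the conjugation symmetry of $D^{J,K}$, yielding also $\core(U) \subseteq \OO^{J,K}$.

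With this inclusion in hand the comparison is immediate: the defining formula for $F^{J,K}(\alpha+\ui\beta)$ only involves $f(\alpha+\beta J)$ and $f(\alpha+\beta K)$, both values at points of $\core(U)$, so they agree with the corresponding values of $f|_{\core(U)}$. Hence $F^{J,K}|_{\core_{\rr_\cc}(U)}$ coincides with the stem function produced by the same $J,K$ construction applied to $f|_{\core(U)}$, which by the first paragraph induces $f|_{\core(U)}$ itself. Passing to the induced slice functions gives $f^{J,K}|_{\core(U)} = f|_{\core(U)}$, and the representation formula from Remark~\ref{rmk:union} transfers verbatim.

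For the final statement, I would invoke the uniqueness of the inducing stem function (noted just after Definition~\ref{def:slice-function}): both $F^{J,K}|_{\core_{\rr_\cc}(U)}$ and $F^{J',K'}|_{\core_{\rr_\cc}(U)}$ induce the same slice function $f|_{\core(U)} \in \sr(\core(U))$, so they must coincide. The only nontrivial part of the whole argument is the bookkeeping that separates the double-index family of $f$ on $U$ from that of $f|_{\core(U)}$ and establishes the inclusion $\core_{\rr_\cc}(U) \subseteq D^{J,K}$; all the analytic content has already been packaged in Remark~\ref{rmk:union}.
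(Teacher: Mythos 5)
Your proof is correct and follows the same route as the paper, which simply applies Remark~\ref{rmk:union} to $\OO_D=\core(U)$. The only substance you add is the explicit bookkeeping that identifies $F^{J,K}|_{\core_{\rr_\cc}(U)}$ (coming from $f$ on $U$) with the stem function of the $J,K$-construction applied to $f|_{\core(U)}$ — a step the paper leaves implicit but which you correctly reduce to the observation that the defining formula only evaluates $f$ at points of $\core(U)$.
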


Besides its independent interest, we will use Lemma~\ref{lem:representationincore} to prove the forthcoming Theorem~\ref{thm:singleindex}.

\subsection{Slice regular families on speared domains}

The aim of this subsection is proving that the double-index families associated to a slice regular function $f:\OO\to\hh$ reduce to single-index families when $\OO$ is a speared domain. The proof is based on Lemma~\ref{lem:representationincore} and on a second lemma, which is a variant of~\cite[Lemma 3.1]{localrepresentation}. Let us use the notation $\scap(J,\varepsilon):=\{K\in\s : |K-J|<\varepsilon\}$ for the spherical cap of radius $\varepsilon$ centered at $J$ in $\s$ (or the whole sphere $\s$ if $\varepsilon>2$). Let $\scap^*(J,\varepsilon):=\scap(J,\varepsilon)\setminus\{J\}$ denote the same spherical cap, punctured at $J$.

\begin{lemma}\label{lem:neighborhoodcompact}
Let $Y$ be a nonempty open subset of $\hh$. Let $C$ be a nonempty, compact and path-connected subset of $\rr_\cc\simeq\cc$. For each positive real number $\varepsilon$, the set
\[C_\varepsilon:=\{z\in\rr_\cc \ |\  \mathrm{dist}(z,C)<\varepsilon\}\]
is a path-connected open neighborhood of $C$. If $J\in\s$ is such that $\phi_J(C)\subset Y$, then there exists $\varepsilon>0$ such that $\phi_K(C_\varepsilon)\subset Y$ for all $K\in\scap(J,\varepsilon)$.
\end{lemma}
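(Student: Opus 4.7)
The plan is to prove the two parts of the statement in sequence, the first being essentially topological and the second a uniform-continuity/compactness argument.

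For the first claim, openness of $C_\varepsilon$ is immediate since $C_\varepsilon=\bigcup_{z_0\in C}B_{\rr_\cc}(z_0,\varepsilon)$. To show path-connectedness, I would fix $z,z'\in C_\varepsilon$, pick $z_0,z_0'\in C$ with $|z-z_0|<\varepsilon$ and $|z'-z_0'|<\varepsilon$, connect $z_0$ to $z_0'$ by a path $\gamma$ inside $C$ (using the hypothesis that $C$ is path-connected), and then prepend and append the straight-line segments from $z$ to $z_0$ and from $z_0'$ to $z'$; all three portions lie in $C_\varepsilon$ because the segments stay in a ball of radius $\varepsilon$ around a point of $C$, and $\gamma$ stays in $C\subseteq C_\varepsilon$.

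For the second claim, the idea is to first thicken $\phi_J(C)$ inside $Y$ and then control the perturbation caused by changing $z$ in $C_\varepsilon$ and $J$ in $\scap(J,\varepsilon)$ simultaneously. Since $\phi_J(C)$ is compact, $Y$ is open and $\phi_J(C)\subset Y$, there exists $\delta>0$ such that the open $\delta$-tube $\{q\in\hh : \mathrm{dist}(q,\phi_J(C))<\delta\}$ lies in $Y$. Let $M:=\max\{|\beta_0| : \alpha_0+\ui\beta_0\in C\}$, which is finite by compactness. For $z=\alpha+\ui\beta\in C_\varepsilon$, choose $z_0=\alpha_0+\ui\beta_0\in C$ with $|z-z_0|<\varepsilon$, and for any $K\in\scap(J,\varepsilon)$ write
\[
\phi_K(z)-\phi_J(z_0)=(\alpha-\alpha_0)+\beta(K-J)+(\beta-\beta_0)J,
\]
so that
\[
|\phi_K(z)-\phi_J(z_0)|\leq|\alpha-\alpha_0|+|\beta|\,|K-J|+|\beta-\beta_0|\leq\varepsilon+(M+\varepsilon)\varepsilon+\varepsilon.
\]
Choosing $\varepsilon>0$ small enough that $2\varepsilon+(M+\varepsilon)\varepsilon<\delta$ (for instance, any $\varepsilon<\min\{1,\delta/(M+3)\}$) gives $\phi_K(z)\in Y$ for every $z\in C_\varepsilon$ and every $K\in\scap(J,\varepsilon)$, as required.

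Neither step looks like a serious obstacle; the only mildly delicate point is the estimate of $\beta K-\beta_0 J$, where one must insert $\pm\beta J$ to separate the effect of moving $K$ away from $J$ (weighted by the bounded imaginary part $|\beta|\leq M+\varepsilon$) from the effect of moving $z$ away from $z_0$. All other bounds are immediate consequences of the triangle inequality and compactness of $C$.
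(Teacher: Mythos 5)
Your proof is correct and takes essentially the same approach as the paper: compactness of $\phi_J(C)$ gives a positive distance to $\hh\setminus Y$, and a triangle-inequality estimate on $|\phi_K(z)-\phi_J(z_0)|$ in terms of $\varepsilon$ and the bound $M=\max_{z\in C}|\im(z)|$ finishes the job. The only cosmetic difference is the intermediate point: the paper inserts $\pm\phi_K(z_0)$ and exploits that $\phi_K$ is an isometry of $\rr_\cc$ onto $\cc_K$, giving the tighter bound $|z-z_0|+|K-J|\,|\im(z_0)|<\varepsilon(1+M)$ and an explicit formula for $\varepsilon$, whereas your insertion of $\pm\beta J$ loses a factor of $\sqrt2$ and carries $M+\varepsilon$ in place of $M$ — harmless, and your formulation has the slight advantage of not needing the paper's separate $Y=\hh$ case.
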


\begin{proof}
If $Y=\hh$, it suffices to set $\varepsilon:=3$. Let us assume henceforth $\hh\setminus Y\neq\emptyset$.

Since $\phi_J(C)$ is a compact subset of the open set $Y$, the distance between $\hh\setminus Y$ and $\phi_J(C)$ is strictly positive. Set $\ell:=\max_{z\in C}|\im(z)|$ and
\[\varepsilon:=\frac{\mathrm{dist}(\hh\setminus Y,\phi_J(C))}{\ell+1}>0\,.\]

We observe that $C_\varepsilon$ is the union of open disks of radii $\varepsilon$ centered at arbitrary points $z_0\in C$, whence $C_\varepsilon$ is an open neighborhood of $C$. Since $C$ is path-connected and since any point $z\in C_\varepsilon$ can be joined to a point $z_0\in C$ through a line segment included in $C_\varepsilon$, it follows that $C_\varepsilon$ is path-connected.

We finally show that $\phi_K(C_\varepsilon)\subset Y$ for all $K\in\scap(J,\varepsilon)$. If $z\in C_\varepsilon$, then there exists $z_0\in C$ such that $|z-z_0|<\varepsilon$. It follows that
\begin{align*}
\mathrm{dist}(\phi_K(z),\phi_J(C))&\leq|\phi_K(z)-\phi_J(z_0)|\\
&\leq|\phi_K(z)-\phi_K(z_0)|+|\phi_K(z_0)-\phi_J(z_0)|\\
&=|z-z_0|+|K-J||\im(z_0)|<\varepsilon+\varepsilon\ell\\
&=\mathrm{dist}(\hh\setminus Y,\phi_J(C)).
\end{align*}
As a consequence, $\phi_K(z)\in Y$, as desired.
\end{proof}

We are now ready to prove that the double index reduces to a single index if we are dealing with a speared domain. We recall that, for every $E\subseteq\rr_\cc$, the symbol $\overline{E}$ denotes the image of $E$ through conjugation $z\mapsto\bar z$.

\begin{theorem}\label{thm:singleindex}
Let $\OO\subseteq\hh$ be a speared domain, let $f:\OO\to\hh$ be a slice regular function and let $\{F^{J,K}:D^{J,K}\to\hh_\cc\}_{J,K\in\s, J\neq K}$ denote the double-index holomorphic stem family associated to $f$. Let us fix $J\in\s$ and consider the speared open set
\[D^J:=\phi_J^{-1}(\OO_J^\geqslant)\cup\overline{\phi_J^{-1}(\OO_J^\geqslant)}\,.\]
Then there exists a holomorphic stem function $F^J:D^J\to\hh_\cc$ with the following property: for each $z\in D^J$, there exist  a real number $\varepsilon_z>0$ and a speared open neighborhood $U_z$ of $z$ in $D^J$ such that
\begin{enumerate}
\item it holds $\core_{\rr_\cc}(\OO)\subseteq U_z$ and $U_z\setminus\core_{\rr_\cc}(\OO)$ is contained in either $\rr_\cc^>$ or $\rr_\cc^<$;
\item for any choice of distinct $J',K'\in\scap(J,\varepsilon_z)$, it holds $U_z\subseteq D^{J',K'}$ and $F^{J',K'}_{|_{U_z}}=F^J_{|_{U_z}}$.
\end{enumerate}
In case $z\in\core_{\rr_\cc}(\OO)$, we can assume $U_z$ to be $\core_{\rr_\cc}(\OO)$ and $\varepsilon_z=3$ (whence $\scap(J,\varepsilon_z)=\s$).
\end{theorem}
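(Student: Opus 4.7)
The plan is to define $F^J$ locally as the restriction of some $F^{J',K'}$ for $J',K'\in\s$ close enough to $J$, and then to appeal to Proposition~\ref{prop:doubleindexstemfamily}(4) to show that the particular choice of $(J',K')$ does not matter. The technical workhorse will be Lemma~\ref{lem:neighborhoodcompact}: applied to the support of a path from a real point to $z$, it will supply both the spherical-cap radius $\varepsilon_z$ and a tubular neighborhood of the path inside which $\phi_K$ maps into $\OO$ uniformly for $K$ in the cap.

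\textbf{Constructing $U_z$ for $z\in D^J\setminus\core_{\rr_\cc}(\OO)$.} Up to conjugation, assume $z\in\rr_\cc^>$. I would first pick a path $\gamma_0:[0,1]\to\phi_J^{-1}(\OO_J^\geqslant)$ with $\gamma_0(0)\in\rr$ and $\gamma_0(1)=z$ (Remark~\ref{rmk:path}), then cut it in two steps. First, setting $t_0:=\sup\{t\in[0,1]:\gamma_0(t)\in\rr\}<1$ and taking $\gamma_1$ to be the reparametrization of $\gamma_0|_{[t_0,1]}$ forces $\alpha_0:=\gamma_1(0)\in\rr$ and $\gamma_1((0,1])\subseteq\rr_\cc^>$. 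Second, since every real point of $\OO$ lies in $\core_{\rr_\cc}(\OO)$ (the largest symmetric open neighborhood of $\alpha_0$ inside $\OO$ is a symmetric slice domain, hence a component of the core), I can fix $\delta>0$ with $B(\alpha_0,\delta)\subseteq\core_{\rr_\cc}(\OO)$; setting $s_1:=\inf\{s\in[0,1]:\gamma_1(s)\notin B(\alpha_0,\delta)\}\in(0,1]$ (well-defined since $z\notin\core_{\rr_\cc}(\OO)$) and letting $\gamma$ be the reparametrization of $\gamma_1|_{[s_1,1]}$, the support $C_1:=\gamma([0,1])$ is compact in $\rr_\cc^>$, with $\gamma(0)\in\partial B(\alpha_0,\delta)\cap\rr_\cc^>$ and $d:=\mathrm{dist}(C_1,\rr)>0$. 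Applying Lemma~\ref{lem:neighborhoodcompact} to $C_1$ with $Y:=\OO$ (and shrinking the resulting parameter below $d$) then yields $\varepsilon_z>0$ such that $(C_1)_{\varepsilon_z}$ is a path-connected open set contained in $\rr_\cc^>$ and $\phi_K((C_1)_{\varepsilon_z})\subseteq\OO$ for every $K\in\scap(J,\varepsilon_z)$. I finally set $U_z:=\core_{\rr_\cc}(\OO)\cup(C_1)_{\varepsilon_z}$.

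\textbf{Verifying $U_z$ and defining $F^J$.} The set $U_z$ is open, contains $z$ and $\core_{\rr_\cc}(\OO)$, and satisfies $U_z\setminus\core_{\rr_\cc}(\OO)\subseteq(C_1)_{\varepsilon_z}\subseteq\rr_\cc^>$. Since $\gamma(0)\in\partial B(\alpha_0,\delta)\cap\rr_\cc^>$ with $\mathrm{dist}(\gamma(0),\rr)=d>\varepsilon_z$, the ball $B(\gamma(0),\varepsilon_z)\subseteq(C_1)_{\varepsilon_z}$ meets $B(\alpha_0,\delta)\subseteq\core_{\rr_\cc}(\OO)$ inside $\rr_\cc^>$; hence the connected component of $U_z$ that contains $(C_1)_{\varepsilon_z}$ also contains the core-component of $\alpha_0$, which meets $\rr$. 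The remaining components of $U_z$ are components of $\core_{\rr_\cc}(\OO)$, each meeting $\rr$ by spearedness of the core, so $U_z$ is speared. For any distinct $J',K'\in\scap(J,\varepsilon_z)$, the inclusion $(C_1)_{\varepsilon_z}\subseteq D^{J',K'}$ follows from Lemma~\ref{lem:neighborhoodcompact} combined with $(C_1)_{\varepsilon_z}\subseteq\rr_\cc^>$, while $\core_{\rr_\cc}(\OO)\subseteq D^{J',K'}$ follows from the symmetry of $\core(\OO)$; thus $U_z\subseteq D^{J',K'}$. Applying Proposition~\ref{prop:doubleindexstemfamily}(4) componentwise in $U_z$ yields $F^{J',K'}|_{U_z}=F^{J'',K''}|_{U_z}$ for all choices of distinct $J',K'$ and $J'',K''$ in $\scap(J,\varepsilon_z)$, so I define $F^J|_{U_z}$ to be this common restriction. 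Consistency on overlaps $U_{z_1}\cap U_{z_2}$ follows by picking $J',K'\in\scap(J,\min(\varepsilon_{z_1},\varepsilon_{z_2}))$.

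\textbf{Wrap-up, special case, and main obstacle.} This defines $F^J$ on $D^J\cap\rr_\cc^\geqslant$; I extend to $D^J\cap\rr_\cc^<$ by setting $U_z:=\overline{U_{\bar z}}$, $\varepsilon_z:=\varepsilon_{\bar z}$, and $F^J(z):=\overline{F^J(\bar z)}$ (consistent on $\rr$ because the stem property of each $F^{J',K'}$ makes $F^J$ real there). The stem property is then built into the construction, and holomorphy is automatic since $F^J$ coincides locally with holomorphic $F^{J',K'}$'s (in particular in a neighborhood of any real point, via the special case). In the special case $z\in\core_{\rr_\cc}(\OO)$, taking $U_z:=\core_{\rr_\cc}(\OO)$ and $\varepsilon_z:=3$ suffices because Lemma~\ref{lem:representationincore} guarantees that all $F^{J',K'}$ with distinct $J',K'\in\s$ agree on $\core_{\rr_\cc}(\OO)$. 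The main obstacle, to my mind, is the two-step cutting of $\gamma_0$: a single cut at $t_0$ would leave the trimmed path accumulating on $\rr$ at $\alpha_0$, preventing any tubular neighborhood $(C_1)_{\varepsilon_z}$ from lying inside $\rr_\cc^>$; the second cut near $\alpha_0$ is precisely what reconciles one-sidedness of $U_z\setminus\core_{\rr_\cc}(\OO)$ with the connection to the core that guarantees spearedness.
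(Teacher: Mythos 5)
Your proof is correct and follows essentially the same strategy as the paper's: trim a path from a real point to $z$ to a compact set $C$ inside $\rr_\cc^>$, apply Lemma~\ref{lem:neighborhoodcompact} to obtain a tubular neighborhood $C_\varepsilon$ and a spherical-cap radius, use Proposition~\ref{prop:doubleindexstemfamily}(4) componentwise on $U_z := \core_{\rr_\cc}(\OO)\cup C_\varepsilon$ to show all nearby $F^{J',K'}$ agree, and reflect to $\rr_\cc^<$. The only genuine deviation is in the trimming step: you cut the path twice (at the last time it meets $\rr$, then at the boundary of a small ball in the core around that real point), whereas the paper cuts once, at $t_0 := \sup\{t : \gamma(t)\in\core_{\rr_\cc}(\OO)\}$. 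Since every real point of $\OO$ lies in $\core_{\rr_\cc}(\OO)$, this single cut already forces $C := \gamma([t_0,1]) \subset \rr_\cc^>$ and places $\gamma(t_0)$ on $\partial\core_{\rr_\cc}(\OO)$, so $C_\varepsilon$ meets the core directly. Relatedly, the paper applies Lemma~\ref{lem:neighborhoodcompact} with $Y := \OO \setminus \rr$ rather than $Y := \OO$: then $C_\varepsilon$, being path-connected and $\rr$-avoiding, lands in $\rr_\cc^>$ automatically, with no need for the explicit bound $\varepsilon_z < \mathrm{dist}(C_1,\rr)$. The obstacle you flag in your last paragraph --- a one-cut trimmed path accumulating on $\rr$ --- only arises if the single cut is placed at the last \emph{real} point of the path, not at the last \emph{core} point; the paper's choice dissolves it. Finally, your remark that Lemma~\ref{lem:schwarzreflection} is not strictly needed (holomorphy near real points is inherited from the $F^{J',K'}$'s via the core case) is correct; the paper invokes the reflection lemma anyway, which is an equivalent way to conclude.
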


\begin{proof}
Let us begin by choosing, for each $z=\alpha+\beta J\in (D^J)^\geqslant$, appropriate $\varepsilon_z>0$ and $U_z$.
\begin{itemize}
\item If $z\in\core_{\rr_\cc}(\OO)$, we can define $U_z:=\core_{\rr_\cc}(\OO)\subseteq D^J$ and $\varepsilon_z:=3$: Lemma~\ref{lem:representationincore} guarantees that $F^{J,K}_{|_{U_z}}=F^{J',K'}_{|_{U_z}}$ for all $J',K,K'\in\scap(J,\varepsilon_z)=\s$ with $J\neq K$ and $J'\neq K'$.
\item Assume $z\not\in\core_{\rr_\cc}(\OO)$ (whence $\beta>0$). Remark~\ref{rmk:path} implies that there exists a path $\gamma:[0,1]\to\phi_J^{-1}(\OO_J^{\geqslant})$ with $\gamma(0)\in\rr$ and $\gamma(1)=z$. Let us set
\[C:=\gamma([t_0,1]),\quad t_0:=\sup\{t\in[0,1]\,|\,\gamma(t)\in\core_{\rr_\cc}(\OO)\}\in(0,1]\,.\]
We observe, for future reference, that $\gamma(t_0)$ belongs to the boundary of $\core_{\rr_\cc}(\OO)$ and that
\[C\subset \phi_J^{-1}(\OO_J^{\geqslant})\setminus\core_{\rr_\cc}(\OO)\subseteq\rr_\cc^>\,.\]
Now, $C$ is compact and path-connected; moreover, $\phi_J(C)\subset\OO_J^>\subset\OO\setminus\rr$. If we apply Lemma~\ref{lem:neighborhoodcompact} to $C$ and to $Y:=\OO\setminus\rr$, we conclude that there exists a real number $\varepsilon>0$ such that the path-connected open neighborhood $C_\varepsilon$ of $C$ fulfills the condition $\phi_K(C_\varepsilon)\subset\OO\setminus\rr$ (whence $\phi_K(C_\varepsilon)\subset\OO_K^>$) for all $K\in\scap(J,\varepsilon)$. We define $U_z:=\core_{\rr_\cc}(\OO)\cup C_\varepsilon$ and observe that $z\in U_z\subseteq D^{J',K'}$ for all distinct $J',K'\in\scap(J,\varepsilon)$. We also remark that, since $C$ intersects the boundary of $\core_{\rr_\cc}(\OO)$ at $\gamma(t_0)$, its open neighborhood $C_\varepsilon$ intersects $\core_{\rr_\cc}(\OO)$ at some point $p_0$. We can prove that the open set $U_z$ is speared by applying Remark~\ref{rmk:path}. Indeed, we can exhibit for every $p\in U_z$ a path $\gamma_p:[0,1]\to U_z$ with $\gamma_p(0)\in\rr$ and $\gamma_p(1)=p$:
\begin{itemize}
\item if $p\in \core_{\rr_\cc}(\OO)$, the existence of a suitable path $\gamma_p:[0,1]\to \core_{\rr_\cc}(\OO)\subset U_z$ is guaranteed because $\core_{\rr_\cc}(\OO)$ is speared;
\item if $p\in C_\varepsilon$, we can construct $\gamma_p:[0,1]\to U_z$ by joining a path $\gamma_{p_0}$ from a real point to $p_0$ in $\core_{\rr_\cc}(\OO)$ with a path from $p_0$ to $p$ in $C_\varepsilon$ (which exists because $C_\varepsilon$ is path-connected).
\end{itemize}
By applying property {\it 4.} of Proposition~\ref{prop:doubleindexstemfamily} to each connected component of $U_z$, we obtain that $F^{J,K}_{|_{U_z}}=F^{J',K'}_{|_{U_z}}$ for all $J',K,K'\in\scap(J,\varepsilon)$ with $J\neq K$ and $J'\neq K'$. We conclude by setting $\varepsilon_z:=\varepsilon$.
\end{itemize}
We are now ready to define $F^J(z)$ as the constant value of the map $\scap^*(J,\varepsilon_z)\to\hh_\cc,\ K\mapsto F^{J,K}(z)$. In particular,
\[F^J(z)=\lim_{\scap^*(J,\varepsilon_z)\ni K\to J}F^{J,K}(z)\,.\]
We notice that the value $F^J(z)$ does not depend on the choice of $\varepsilon_z$ nor on the choice of $U_z$. Furthermore, $F^J$ is holomorphic near $z$ because, for $K\in\scap^*(J,\varepsilon_z)$, it coincides in $U_z$ with the holomorphic stem function $F^{J,K}$.

We have therefore defined a holomorphic $F^J:(D^J)^\geqslant\to\hh_\cc$ such that, for all $\alpha\in D^J\cap\rr$ it holds $F^J(\alpha)=f(\alpha)\in\hh$ (whence $\overline{F^J(\alpha)}=F^J(\alpha)$). Lemma~\ref{lem:schwarzreflection} implies that $F^J$ extends to a holomorphic stem function $D^J\to\hh_\cc$ by setting $F^J(\bar z):=\overline{F^J(z)}$ for each $z\in(D^J)^\geqslant$.

We conclude the proof by setting $U_{\bar z}:=\overline{U_z}$ and $\varepsilon_{\bar z}:=\varepsilon_z$ for all $z\in(D^J)^\geqslant$. We can do so because $F^{J',K'}_{|_{\overline{{U_z}}}}=F^{J}_{|_{\overline{{U_z}}}}$ for all distinct $J',K'\in\scap(J,\varepsilon_z)$, as a consequence of the fact that $F^{J,'K'}$ and $F^J$ are stem functions.
\end{proof}

\begin{definition}
In the situation described in Theorem~\ref{thm:singleindex}, $\{F^J:D^J\to\hh_\cc\}_{J\in\s}$ is called the \emph{holomorphic stem family associated to $f$}. For each $J\in\s$, let us consider the symmetric speared open set $\OO^J:=\OO_{D^J}$: the \emph{slice regular family associated to $f$} is the family $\{f^J:\OO^J\to\hh\}_{J\in\s}$ of the elements $f^J=\I(F^J)$ of $\sr(\OO^J)$ induced by the $F^J$'s.
\end{definition}

For future use, we prove the next lemma.

\begin{lemma}\label{lem:connectedcomponent}
Let $\OO\subseteq\hh$ be a speared domain, let $f:\OO\to\hh$ be a slice regular function and let $\{F^J:D^J\to\hh_\cc\}_{J\in\s}$ be the holomorphic stem family associated to $f$. Fix $J_0,J_1\in\s$. The holomorphic stem functions $F^{J_0},F^{J_1}$ coincide in $\core_{\rr_\cc}(\OO)\supseteq\OO\cap\rr$. Now, let $\alpha,\beta\in\rr$ with $\beta\geq0$ be such that $x_0=\alpha+\beta J_0,x_1=\alpha+\beta J_1$ both belong to $\OO$. If $x_0,x_1$ belong to the same connected component of $(\alpha+\beta\s)\cap\OO$, then there exists an open neighborhood $U$ of $\alpha+\beta\ui$ in $D^{J_0,J_1}$ such that $F^{J_0},F^{J_1}$ coincide in $U$.
\end{lemma}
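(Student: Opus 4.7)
The plan is to handle the two claims in sequence.

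For the first claim, I would apply the special case of Theorem~\ref{thm:singleindex} that permits $U_z = \core_{\rr_\cc}(\OO)$ and $\varepsilon_z = 3$ whenever $z \in \core_{\rr_\cc}(\OO)$, so that $\scap(J,\varepsilon_z) = \s$. Picking any $K \in \s \setminus \{J_0, J_1\}$ then gives $F^{J_0} = F^{J_0, K}$ and $F^{J_1} = F^{J_1, K}$ throughout $\core_{\rr_\cc}(\OO)$. The final clause of Lemma~\ref{lem:representationincore} yields $F^{J_0, K} = F^{J_1, K}$ on $\core_{\rr_\cc}(\OO)$, hence $F^{J_0} = F^{J_1}$ there.

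For the second claim, the case $\beta = 0$ reduces to the first: $x_0 = x_1 = \alpha \in \OO \cap \rr \subseteq \core_{\rr_\cc}(\OO)$, and an easy set-theoretic check gives $\core_{\rr_\cc}(\OO) \subseteq D^{J_0, J_1}$. Assume henceforth $\beta > 0$. Since $(\alpha+\beta\s)\cap\OO$ is open in the $2$-sphere $\alpha+\beta\s$, its connected components are path-connected, so a path from $x_0$ to $x_1$ lifts via $\delta(t) = \alpha + \beta K(t)$ to a continuous $K:[0,1]\to\s$ with $K(0)=J_0$, $K(1)=J_1$, and $\alpha+\beta K(t)\in\OO$ for every $t$. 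Set $z_0 := \alpha + \beta\ui$, so $z_0 \in (D^{K(t)})^\geqslant$ for each $t$, and apply Theorem~\ref{thm:singleindex} at $z_0$ with $J = K(t)$: this yields $\varepsilon_t > 0$ and a speared open neighborhood $U_t$ of $z_0$ in $D^{K(t)}$ such that $F^{J',K'}|_{U_t} = F^{K(t)}|_{U_t}$ for all distinct $J',K'\in\scap(K(t),\varepsilon_t)$. The key local claim is that, for every $J_s\in\scap(K(t),\varepsilon_t)$, one has $U_t\subseteq D^{J_s}$ and $F^{J_s}|_{U_t} = F^{K(t)}|_{U_t}$: the containment follows from $U_t\subseteq D^{J_s,K'}\subseteq D^{J_s}$ for any $K'\in\scap(K(t),\varepsilon_t)\setminus\{J_s\}$; for the pointwise equality at $z\in U_t$, Theorem~\ref{thm:singleindex} applied at $z$ with $J = J_s$ provides $\varepsilon_z^{(J_s)} > 0$ with $F^{J_s}(z) = F^{J_s, K''}(z)$ for any $K''\in\scap^*(J_s,\varepsilon_z^{(J_s)})$, and choosing $K''$ in the non-empty open intersection $\scap^*(J_s,\varepsilon_z^{(J_s)})\cap\scap(K(t),\varepsilon_t)$ gives $F^{J_s,K''}(z) = F^{K(t)}(z)$.

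To conclude, continuity of $K$ and compactness of $[0,1]$ produce a subdivision $0 = s_0 < s_1 < \ldots < s_N = 1$ together with $\tau_i\in[s_i,s_{i+1}]$ satisfying $K([s_i,s_{i+1}])\subseteq\scap(K(\tau_i),\varepsilon_{\tau_i})$. The local claim then yields $F^{K(s_i)} = F^{K(\tau_i)} = F^{K(s_{i+1})}$ on $U_{\tau_i}$, and chaining these equalities on the open neighborhood $U := \bigcap_{i=0}^{N-1} U_{\tau_i}$ of $z_0$ gives $F^{J_0} = F^{J_1}$ on $U$. The local claim also gives $U\subseteq U_{\tau_0}\subseteq D^{J_0}$ and $U\subseteq U_{\tau_{N-1}}\subseteq D^{J_1}$; a short computation splitting $\rr_\cc$ into $\rr_\cc^>$, $\rr_\cc^<$ and $\rr$ shows that $D^{J_0}\cap D^{J_1} = D^{J_0, J_1}$, so $U\subseteq D^{J_0, J_1}$ as required. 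The main obstacle is the key local claim, where one must reconcile the independently-defined limits $F^{J_0}$ and $F^{J_1}$ by threading a common auxiliary direction $K''$ through the intersection of two spherical caps.
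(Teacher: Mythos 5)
Your proof is correct and follows essentially the same route as the paper: apply Theorem~\ref{thm:singleindex} to produce, at each point of the sphere, a neighborhood and a spherical cap on which all single-index stem functions with index in the cap agree, then cover the lifted path by compactness and chain the equalities on the common intersection of neighborhoods. Your ``key local claim'' is in fact a useful explicit unpacking of the step the paper compresses into an ``It follows that $F^{J'}_{|_{U^J}}=F^{K'}_{|_{U^J}}$'', where one passes from the double-index equality $F^{J',K'}=F^J$ to the single-index one by threading an auxiliary $K''$ through the overlap of the two relevant spherical caps -- the same mechanism, just spelled out; the separate treatment of $\beta=0$ is harmless but unnecessary, since in that case $\alpha+\beta\ui$ lies in $\core_{\rr_\cc}(\OO)$ and the general argument collapses to the first claim.
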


\begin{proof}
The first statement concerning $\core_{\rr_\cc}(\OO)$ immediately follows from Theorem~\ref{thm:singleindex} and Lemma~\ref{lem:representationincore}. Now let us prove the second statement about $x_0$ and $x_1$ belonging to the same connected component $\mathcal{C}$ of $(\alpha+\beta\s)\cap\OO$. It suffices to pick any $\mc{J}:[0,1]\to\s$ such that $\alpha+\beta\mc{J}$ is a path within $\mathcal{C}$ and to show that there exists an open neighborhood $U$ of $\alpha+\beta\ui$ in $D^{\mc{J}(0),\mc{J}(1)}$ such that $F^{\mc{J}(0)},F^{\mc{J}(1)}$ coincide in $U$. 

For every $J\in\s$ such that  $\alpha+\beta J\in\OO$, Theorem~\ref{thm:singleindex} guarantees that there exist an an open neighborhood $U^J$ of the point $\alpha+\beta\ui$ in $D^{J}$ and an $\varepsilon^J>0$ with the following property: for any choice of distinct $J',K'\in\scap(J,\varepsilon^J)$, it holds $U^J\subseteq D^{J',K'}$ and $F^{J',K'}_{|_{U^J}}=F^J_{|_{U^J}}$. It follows that $F^{J'}_{|_{U^J}}=F^{K'}_{|_{U^J}}$ for all $J',K'\in\scap(J,\varepsilon^J)$. Consider the open cover $\{\mc{J}^{-1}(\scap(J,\varepsilon^J))\}_{J\in\s}$ of the compact interval $[0,1]$. Take $N\in\nn$ large enough for $1/N$ to be a Lebesgue number for this cover (see~\cite[Lemma 7.2]{libromunkres} for details). It follows that, for each $s\in\{0,\ldots,N-1\}$, there exists $J_s\in\s$ such that $\mc{J}([s/N,(s+1)/N])\subset\scap(J^s,\varepsilon^{J^s})$. As a consequence, $U^{J^s}\subseteq D^{\mc{J}(s/N),\mc{J}((s+1)/N)}$ and $F^{\mc{J}(s/N)}_{|_{U^{J^s}}}=F^{\mc{J}((s+1)/N)}_{|_{U^{J^s}}}$. The intersection $U:=\bigcap_{s=0}^NU^{J^s}$ is an open neighborhood of $\alpha+\beta\ui$ in $D^{\mc{J}(s/N)}$ for all $s\in\{0,\ldots,N\}$ (whence in $D^{\mc{J}(0)}\cap D^{\mc{J}(1)}=D^{\mc{J}(0),\mc{J}(1)}$) and we have
\[F^{\mc{J}(0)}_{|_U}=F^{\mc{J}(1/N)}_{|_U}=\ldots=F^{\mc{J}((N-1)/N)}_{|_U}=F^{\mc{J}(1)}_{|_U}\,,\]
whence the thesis follows.
\end{proof}

\subsection{Semi-global extension and representation for speared domains}

Theorem~\ref{thm:singleindex} allows us to extend and strengthen~\cite[Theorem 3.2]{localrepresentation} to our first main theorem.

\begin{theorem}[Semi-global extension for speared domains]\label{thm:localextension}
Let $\OO\subseteq\hh$ be a speared domain, let $f:\OO\to\hh$ be a slice regular function and let $\{f^J\}_{J\in\s}$ be the slice regular family associated to $f$. Fix $J_0\in\s$. There exists a speared open set $\Lambda$ with
\[\core(\OO)\cup\OO_{J_0}^\geqslant\subset\Lambda\subseteq\OO\cap\OO^{J_0}\]
such that $f$ coincides with $f^{J_0}$ in $\Lambda$. In particular, for each $x_0\in\OO_{J_0}^\geqslant$, $f$ coincides with $f^{J_0}$ in a speared domain including $x_0$: namely, the connected component of $\Lambda$ including $x_0$.
\end{theorem}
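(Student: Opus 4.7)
The plan is to construct $\Lambda$ by hand, by adjoining to $\core(\OO)$ a family of open ``tubes'' --- one for each $x \in \OO_{J_0}^> \setminus \core(\OO)$ --- recycled from the data produced inside the proof of Theorem~\ref{thm:singleindex}.

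First I would verify that $f = f^{J_0}$ already holds on $\core(\OO) \cup \OO_{J_0}^\geqslant$. On $\core(\OO)$ this combines Lemma~\ref{lem:representationincore}, which gives $f = f^{J_0,K}$ there for every $K \neq J_0$, with the last clause of Theorem~\ref{thm:singleindex}, which gives $F^{J_0} = F^{J_0,K}$ on $\core_{\rr_\cc}(\OO)$; hence $f^{J_0} = f^{J_0,K} = f$ on $\core(\OO)$. For a non-real $x = \alpha_0 + \beta_0 J_0 \in \OO_{J_0}^>$, set $z := \alpha_0 + \beta_0 \ui \in (D^{J_0})^\geqslant$ and pick $K \in \scap^*(J_0, \varepsilon_z)$ close enough to $J_0$ so that $\alpha_0 + \beta_0 K \in \OO$ as well (possible because $\OO$ is open and contains $\alpha_0 + \beta_0 J_0$). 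Then $x \in (\OO^{J_0,K})_{J_0}^\geqslant$, so Proposition~\ref{prop:doubleindexstemfamily}.1 gives $f(x) = f^{J_0,K}(x)$, while $F^{J_0,K}(z) = F^{J_0}(z)$ (Theorem~\ref{thm:singleindex}) gives $f^{J_0,K}(x) = f^{J_0}(x)$. Real points of $\OO_{J_0}^\geqslant$ lie in $\spine(\OO) \subseteq \core(\OO)$, so they are covered by the first case.

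Next I would upgrade this pointwise identity to equality on an open set. For each $x \in \OO_{J_0}^> \setminus \core(\OO)$, the proof of Theorem~\ref{thm:singleindex} produces: a path $\gamma : [0,1] \to \phi_{J_0}^{-1}(\OO_{J_0}^\geqslant)$ from $\rr$ to $z$, the compact set $C = \gamma([t_0,1]) \subset \rr_\cc^>$ meeting $\partial \core_{\rr_\cc}(\OO)$ at $\gamma(t_0)$, an $\varepsilon > 0$, and an open connected neighborhood $C_\varepsilon \subset \rr_\cc^>$ of $C$ satisfying $\phi_L(C_\varepsilon) \subset \OO$ for all $L \in \scap(J_0,\varepsilon)$ and $F^{J_0,K} = F^{J_0}$ on $C_\varepsilon$ for all $K \in \scap^*(J_0, \varepsilon)$. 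Set
\[W_x := \{ \alpha + \beta L \in \hh \,:\, L \in \scap(J_0,\varepsilon),\ \alpha + \beta \ui \in C_\varepsilon \}.\]
Since $C_\varepsilon \subset \rr_\cc^>$, the set $W_x$ lies in $\hh \setminus \rr$, is open in $\hh$, is contained in $\OO \cap \OO^{J_0}$, and contains $x$. The identity $f = f^{J_0}$ on $W_x$ is checked pointwise by the previous paragraph's scheme, splitting according to whether $L = J_0$ or $L \in \scap^*(J_0,\varepsilon)$. Defining
\[\Lambda := \core(\OO) \cup \bigcup_{x \in \OO_{J_0}^> \setminus \core(\OO)} W_x\,,\]
we obtain an open subset of $\OO \cap \OO^{J_0}$, containing $\core(\OO) \cup \OO_{J_0}^\geqslant$, on which $f = f^{J_0}$.

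The main obstacle is showing that $\Lambda$ is speared. Fix $I \in \s$ and a connected component $A$ of $\Lambda_I^\geqslant$. Referring to the $W_x, C_\varepsilon, \varepsilon$ above with implicit index $x$, the slice $W_x \cap \cc_I^\geqslant$ equals the connected open set $\phi_I(C_\varepsilon)$ if $I \in \scap(J_0,\varepsilon)$, and is empty otherwise. If $A$ meets any such slice, connectedness forces $\phi_I(C_\varepsilon) \subseteq A$; the key observation, already recorded inside the proof of Theorem~\ref{thm:singleindex}, is that $C_\varepsilon$ meets $\core_{\rr_\cc}(\OO)$ at some point $p_0$ (because $\gamma(t_0) \in \partial \core_{\rr_\cc}(\OO) \cap \overline{C_\varepsilon}$), so that $\phi_I(p_0) \in A \cap \core(\OO)_I^\geqslant$. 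Otherwise $A$ meets $\core(\OO)_I^\geqslant$ directly. In either case, the connected component of $\core(\OO)_I^\geqslant$ through a point of $A \cap \core(\OO)_I^\geqslant$ is a connected subset of $\Lambda_I^\geqslant$ meeting $A$, hence is contained in $A$, and it includes a real point because $\core(\OO)$ is speared, so $A$ does too. The final sentence of the statement follows since a connected component of a speared open subset of $\hh$ is always a speared domain.
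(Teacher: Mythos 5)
Your proof is correct and follows essentially the same route as the paper's. The paper packages the neighborhood around each $x_0\in\OO_{J_0}^\geqslant$ as $V_{x_0}:=\core(\OO)\cup\bigcup_{K\in\scap(J_0,\varepsilon_{z_0})}\phi_K(U_{z_0}\setminus\core_{\rr_\cc}(\OO))$, using the sets $U_{z_0}$ directly from the \emph{statement} of Theorem~\ref{thm:singleindex}, and then sets $\Lambda:=\bigcup_{x_0}V_{x_0}$, invoking that a union of speared open sets is speared (Remark~\ref{rmk:path}) rather than re-verifying spearedness from scratch. You instead re-open the \emph{proof} of Theorem~\ref{thm:singleindex} to extract the tube $C_\varepsilon$ and build $W_x=\bigcup_{L\in\scap(J_0,\varepsilon)}\phi_L(C_\varepsilon)$; since $U_{z_0}=\core_{\rr_\cc}(\OO)\cup C_\varepsilon$ there, your $\core(\OO)\cup W_x$ is exactly the paper's $V_{x_0}$ and the two $\Lambda$'s coincide. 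The ingredients (Theorem~\ref{thm:singleindex}, Proposition~\ref{prop:doubleindexstemfamily}(1), Lemma~\ref{lem:representationincore}) and the pointwise-then-open-set strategy are the same; your explicit connected-component argument for spearedness is a more verbose substitute for the paper's reliance on Remark~\ref{rmk:path}, but both are valid.
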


\begin{proof}
Let $\{F^J:D^J\to\hh_\cc\}_{J\in\s}$ and $\{F^{J,K}:D^{J,K}\to\hh_\cc\}_{J,K\in\s, J\neq K}$ denote, respectively, the holomorphic stem family and the double-index holomorphic stem family associated to $f$. In particular, $f^{J_0}=\I(F^{J_0})$.

Let us choose $x_0=\alpha_0+\beta_0 J_0\in\OO_{J_0}^\geqslant$ (with $\alpha_0,\beta_0\in\rr$). If we set $z_0:=\alpha_0+\ui\beta_0$ (so that $x_0=\phi_{J_0}(z_0)$), then Theorem~\ref{thm:singleindex} guarantees that there exist $\varepsilon_{z_0}>0$ and a speared open neighborhood $U_{z_0}$ of $z_0$ in $D^{J_0}$ such that:
\begin{enumerate}
\item it holds $\core_{\rr_\cc}(\OO)\subseteq U_{z_0}$ and $U_{z_0}':=U_{z_0}\setminus\core_{\rr_\cc}(\OO)$ is contained in either $\rr_\cc^>$ or $\rr_\cc^<$;
\item for all $K\in\scap^*({J_0},\varepsilon_{z_0})$, it holds $U_{z_0}\subseteq D^{{J_0},K}$ and $F^{{J_0},K}_{|_{U_{z_0}}}=F^{J_0}_{|_{U_{z_0}}}$.
\end{enumerate}
We can produce a speared open neighborhood $V_{x_0}$ of $x_0$ in $\OO\cap\OO_{U_{z_0}}\subseteq\OO\cap\OO_{D^{J_0}}$, as follows. We set $V_{x_0}:=\core(\OO)\cup V_{x_0}'$, where
\[V_{x_0}':=\bigcup_{K\in\scap({J_0},\varepsilon_{z_0})}\phi_K(U_{z_0}')=\{\alpha+\beta K : \alpha,\beta\in\rr,\alpha+\ui\beta\in U_{z_0}', K\in\scap({J_0},\varepsilon_{z_0})\}\,.\]
Now, $f^{J_0}=\I(F^{J_0})$ coincides in $V_{x_0}$ with $f^{{J_0},K}=\I(F^{{J_0},K})$ for all $K\in\scap^*({J_0},\varepsilon_{z_0})$. Let us apply property {\it 1.} in Proposition~\ref{prop:doubleindexstemfamily}: namely, for all $K\in\scap^*({J_0},\varepsilon_{z_0})$, the function $f^{{J_0},K}$ coincides with $f$ in $(\OO_{U_{z_0}})_{J_0}^\geqslant=(V_{x_0})_{J_0}^\geqslant$ and in $(\OO_{U_{z_0}})_{K}^\geqslant=(V_{x_0})_{K}^\geqslant$. It follows that $f^{J_0}$ coincides with $f$ in $V_{x_0}'$. Lemma~\ref{lem:representationincore} allows us to conclude that $f^{J_0}$ coincides with $f$ in $V_{x_0}=\core(\OO)\cup V_{x_0}'$.

The thesis now follows setting $\Lambda:=\bigcup_{x_0\in\OO_{J_0}^\geqslant}V_{x_0}$.
\end{proof}

Let us add the next definition, which extends~\cite[Definition 2.14]{geometricfunctiontheory}. We have recalled the notion of slice domain in Definition~\ref{def:slicedomain}.

\begin{definition}
Let $\OO$ be a domain in $\hh$ and let $f:\OO\to\hh$ be a slice regular function. Whenever $N$ is a symmetric speared open set, $\widetilde f:N\to\hh$ is a slice regular function and $\Lambda$ is a speared open set contained in $\OO\cap N$ such that $f_{|_\Lambda}=\widetilde f_{|_\Lambda}$, then $(\widetilde f, N, \Lambda)$ is called a \emph{speared extension triplet} for $f$. A speared extension triplet $(\widetilde f, N, \Lambda)$ for $f$ where $N$ is a symmetric slice domain and $\Lambda$ is a slice domain is called a \emph{slice extension triplet} for $f$.
\end{definition}

Using the last definition, Theorem~\ref{thm:localextension} can be reformulated as follows.

\setcounter{theorem}{10}
\begin{theorem}[Semi-global extension for speared domains, rephrased]
If $f$ is a slice regular function on a speared domain $\OO$ then, for all $J_0\in\s$, the couple $f^{J_0},\OO^{J_0}$ can be completed to a speared extension triplet $(f^{J_0},\OO^{J_0}, \Lambda)$ with $\Lambda\supseteq\core(\OO)\cup\OO_{J_0}^\geqslant$. In particular, if we define $\Lambda_0$ to be the connected component of $\Lambda$ including $x_0$, then $\Lambda_0$ is a speared domain and $(f^{J_0},\OO^{J_0}, \Lambda_0)$ is still a speared extension triplet for $f$.
\end{theorem}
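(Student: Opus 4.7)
The plan is to recast Theorem~\ref{thm:localextension} in the language of the newly introduced speared extension triplets, with only one genuinely new ingredient: passing from $\Lambda$ to its connected component $\Lambda_0$ through $x_0$ must preserve the speared property.

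First I would check that $\OO^{J_0}=\OO_{D^{J_0}}$ is a symmetric speared open set, so that $(f^{J_0},\OO^{J_0})$ is a valid candidate for the ``extended'' part of the triplet. Since $D^{J_0}=\phi_{J_0}^{-1}(\OO_{J_0}^\geqslant)\cup\overline{\phi_{J_0}^{-1}(\OO_{J_0}^\geqslant)}$ is preserved by conjugation and since, by the speared assumption on $\OO$, every connected component of $\phi_{J_0}^{-1}(\OO_{J_0}^\geqslant)$ meets $\rr$, each connected component of $D^{J_0}$ meets $\rr$ and is therefore preserved by conjugation; its circularization is a symmetric slice domain, so $\OO^{J_0}$ is a disjoint union of symmetric slice domains and in particular a symmetric speared open set. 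Moreover $f^{J_0}=\I(F^{J_0})\in\sr(\OO^{J_0})$ is slice regular by construction.

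Next I would invoke Theorem~\ref{thm:localextension} verbatim to produce a speared open $\Lambda$ with $\core(\OO)\cup\OO_{J_0}^\geqslant\subseteq\Lambda\subseteq\OO\cap\OO^{J_0}$ and $f_{|_\Lambda}=f^{J_0}_{|_\Lambda}$. Combined with the preceding paragraph, this immediately exhibits $(f^{J_0},\OO^{J_0},\Lambda)$ as a speared extension triplet in the sense of the new definition, proving the first half of the statement.

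The only substantive step is to show that the connected component $\Lambda_0$ of $\Lambda$ through $x_0$ is itself speared; once that is established, the inclusions $\Lambda_0\subseteq\Lambda\subseteq\OO\cap\OO^{J_0}$ and the equality $f_{|_{\Lambda_0}}=f^{J_0}_{|_{\Lambda_0}}$ trivially upgrade the triplet. To this end, fix $I\in\s$ and a connected component $C$ of $(\Lambda_0)_I^\geqslant=\Lambda_0\cap\cc_I^\geqslant$; let $C'$ denote the connected component of $\Lambda_I^\geqslant$ containing $C$. Since $\Lambda$ is speared, $C'$ contains a real point $\alpha$. The crux of the argument — and what I expect to be the only subtle point — is the observation that $C'$, being a connected subset of $\Lambda$ that meets $\Lambda_0$ through $C$, must lie entirely inside the connected component $\Lambda_0$; hence $C'\subseteq(\Lambda_0)_I^\geqslant$, so by the connectedness of $C'$ within $(\Lambda_0)_I^\geqslant$ the real point $\alpha$ belongs to $C$. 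This verifies the speared property for $\Lambda_0$ and completes the proof.
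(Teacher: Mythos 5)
The paper treats this statement as a direct reformulation of Theorem~\ref{thm:localextension} and supplies no separate proof, so what you are really supplying is the unstated routine verification. Your argument is correct and matches the intended reading: the only genuine content is the observation that a connected component of a speared open set is itself speared, and your proof of it is exactly right — the connected component $C'$ of $\Lambda_I^\geqslant$ containing $C$ meets $\Lambda_0$, hence lies in $\Lambda_0$ by maximality of the connected component $\Lambda_0$ in $\Lambda$; being a connected subset of $(\Lambda_0)_I^\geqslant$ containing the maximal connected subset $C$, it must equal $C$, which therefore contains the real point $\alpha$. Your preliminary check that $\OO^{J_0}$ is a symmetric speared open set is also sound, though the paper already asserts this when it introduces the slice regular family right after Theorem~\ref{thm:singleindex}, so you could simply cite it.
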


\setcounter{theorem}{12}

Theorem~\ref{thm:localextension} implies that every speared domain has the Local Extension Property, as defined in~\cite[Definition 11.14]{librospringer2ed}. We prove this fact as the next Corollary.

\begin{corollary}[Local extension for speared domains]\label{cor:localextension}
If $f$ is a slice regular function on a speared domain $\OO$ and if $x_0\in\OO$, then there exists a slice extension triplet $(\widetilde f, N_{x_0}, \Lambda_{x_0})$ for $f$ with $\Lambda_{x_0}\ni x_0$. As a consequence, $f$ is real analytic.
\end{corollary}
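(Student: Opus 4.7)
The plan is to extract the requested slice extension triplet from the speared extension triplet supplied by Theorem~\ref{thm:localextension}, and then to derive real analyticity via Theorem~\ref{thm:generalrepresentationformula}.

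I choose $J_0\in\s$ with $x_0\in\OO_{J_0}^\geqslant$, writing $x_0=\alpha_0+\beta_0J_0$ with $\beta_0\geq 0$. The rephrased form of Theorem~\ref{thm:localextension} supplies a speared extension triplet $(f^{J_0},\OO^{J_0},\Lambda_0)$ with $\Lambda_0$ a speared domain containing $x_0$, $\Lambda_0\subseteq\OO\cap\OO^{J_0}$, and $f=f^{J_0}$ on $\Lambda_0$. Now $\OO^{J_0}=\OO_{D^{J_0}}$ is symmetric, and $D^{J_0}\subseteq\rr_\cc$ is speared by Theorem~\ref{thm:singleindex}; any connected component $D'$ of $D^{J_0}$ meets $\rr$, so $D'$ and $\overline{D'}$ share a real point and must coincide by connectedness, hence $D'$ is symmetric. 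Therefore every component of $\OO^{J_0}$ is a symmetric slice domain. Let $N_{x_0}$ denote the component of $\OO^{J_0}$ containing $x_0$ (so $\Lambda_0\subseteq N_{x_0}$ by connectedness) and set $\widetilde f:=f^{J_0}_{|_{N_{x_0}}}\in\sr(N_{x_0})$.

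The heart of the proof is the construction of a slice domain $\Lambda_{x_0}\subseteq\Lambda_0$ containing $x_0$. If $x_0\in\rr$, any small Euclidean ball around $x_0$ contained in $\Lambda_0$ is already a slice domain. Otherwise, the spearedness of $\Lambda_0$ yields a path $\gamma:[0,1]\to(\Lambda_0)_{J_0}^\geqslant$ with $\gamma(0)\in\rr$ and $\gamma(1)=x_0$, and I arrange $\gamma$ (possibly after replacing the starting real point within the component of $(\Lambda_0)_{J_0}^\geqslant$ containing $x_0$) so that its imaginary coordinate $\beta(t)$ is monotonically increasing. For sufficiently small $\rho>0$ the tube $\Lambda_{x_0}:=\bigcup_{t\in[0,1]}B(\gamma(t),\rho)$ lies in $\Lambda_0$ and is open, connected, and meets $\rr$ at $\gamma(0)$. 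For each $J\in\s$ the intersection $B(\gamma(t),\rho)\cap\cc_J$ is a disk about the orthogonal projection of $\gamma(t)$ onto $\cc_J$, nonempty precisely when $\beta(t)\sqrt{1-\langle J_0,J\rangle^2}<\rho$; by monotonicity of $\beta$ the set of admissible $t$'s is an interval $[0,t^\ast(J))$ containing $0$, and the center of the disk depends continuously on $t$, so $(\Lambda_{x_0})_J$ is path-connected. Hence $\Lambda_{x_0}$ is a slice domain, and since $\Lambda_{x_0}\subseteq\Lambda_0$ we have $f_{|_{\Lambda_{x_0}}}=f^{J_0}_{|_{\Lambda_{x_0}}}=\widetilde f_{|_{\Lambda_{x_0}}}$; thus $(\widetilde f,N_{x_0},\Lambda_{x_0})$ is the desired slice extension triplet.

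Real analyticity is then immediate: by Theorem~\ref{thm:generalrepresentationformula}, $\widetilde f\in\sr(N_{x_0})$ is real analytic on $N_{x_0}$, hence on $\Lambda_{x_0}$; since $f=\widetilde f$ on the open neighborhood $\Lambda_{x_0}$ of $x_0$ and $x_0\in\OO$ was arbitrary, $f$ is real analytic on $\OO$. The main technical obstacle is securing the slice-by-slice connectedness of $\Lambda_{x_0}$, which is why $\gamma$ must be chosen with monotone imaginary coordinate; for a pathological component of $(\Lambda_0)_{J_0}^\geqslant$ where no globally monotone path exists, the construction has to be refined by covering the path with a finite sequence of monotone subtubes and combining them, but the underlying idea is the same.
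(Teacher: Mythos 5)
The overall architecture of your proof matches the paper's: extract $N_{x_0}$ as the connected component of $\OO^{J_0}$ containing $x_0$, observe it is a symmetric slice domain, restrict $f^{J_0}$ to it, and then shrink the speared set $\Lambda_0$ to a slice domain $\Lambda_{x_0}$ around $x_0$. Your argument that every component of $\OO^{J_0}$ is a symmetric slice domain is correct. The genuine gap is in the shrinking step, which the paper delegates to Proposition~\ref{prop:localslicedomain}.

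Your construction of $\Lambda_{x_0}$ as a quaternionic tube $\bigcup_{t}B(\gamma(t),\rho)$ around a path in $\cc_{J_0}$ with monotone imaginary coordinate $\beta(t)$ is not sound, because such a monotone path need not exist: the component of $(\Lambda_0)_{J_0}^\geqslant$ containing $x_0$ may be shaped so that every path from a real point to $x_0$ has to lose height along the way (think of an S-shaped strip), and re-choosing the starting real point cannot fix this. The suggested repair---covering the path with finitely many monotone subtubes and taking the union---does not restore slice-by-slice connectedness. Concretely, for a path whose $\beta$ dips below $\rho/\sqrt{1-\langle J_0,J\rangle^2}$ on two separated portions having different real parts, the slice $(\Lambda_{x_0})_J$ has two components that fail to meet (the second dip happens far from the initial ball around $\gamma(0)$, so the junction between consecutive subtubes lies outside $\cc_J$). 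This is exactly the failure your construction was meant to avoid, and monotonicity is the only thing preventing it.

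The paper's Proposition~\ref{prop:localslicedomain} sidesteps this entirely: instead of a tube in $\hh$ around the path, one takes a planar tube $C_\varepsilon\subset\rr_\cc$ around the compact arc of the path outside a ball $\Delta$ (via Lemma~\ref{lem:neighborhoodcompact}), and sets $\Lambda_{x_0}:=B\cup\bigcup_{K\in\scap(I,\varepsilon)}\phi_K(C_\varepsilon)$. Then every slice $(\Lambda_{x_0})_J$ is either the disk $B_J$ alone or $B_J$ joined to the path-connected set $\phi_J(C_\varepsilon)$ (and possibly $\phi_{-J}(C_\varepsilon)$) through a common point on the boundary of $\Delta$, so connectedness holds automatically with no monotonicity hypothesis. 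You should replace your tube construction by an appeal to this proposition, or reproduce its proof; as written, the step constructing $\Lambda_{x_0}$ does not go through in general.
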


\begin{proof}
We apply Theorem~\ref{thm:localextension} to find a speared extension triplet $(f^{J_0},\OO^{J_0}, \Lambda)$ for $f$ with $\Lambda\supseteq\core(\OO)\cup\OO_{J_0}^\geqslant$. We define $N_{x_0}$ to be the connected component of $\OO^{J_0}$ that includes $x_0$: since $\OO^{J_0}$ is a symmetric speared open set, it follows immediately that $N_{x_0}$ is a symmetric slice domain. The connected component $\Lambda_0$ of $\Lambda$ that includes $x_0$ is a speared domain contained in $N_{x_0}$. Moreover, $\Lambda_0$ is contained in $\OO$ because $\Lambda$ was. By the forthcoming Proposition~\ref{prop:localslicedomain}, there exists a slice domain $\Lambda_{x_0}\ni x_0$ contained in $\Lambda_0\subseteq\OO\cap N_{x_0}$. If we define $\widetilde f$ as the restriction of $f^{J_0}$ to $N_{x_0}$, then $(\widetilde f, N_{x_0}, \Lambda_{x_0})$ is a slice extension triplet for $f$.

Our final statement concerning real analyticity now follows from Theorem~\ref{thm:generalrepresentationformula}.
\end{proof}

The original result~\cite[Theorem 3.2]{localrepresentation} was exactly Corollary~\ref{cor:localextension}, under the additional hypothesis that $\OO$ be a slice domain. Based on the same result, the article~\cite{geometricfunctiontheory} developed a rather complete theory of regular functions on general slice domains. Later on,~\cite[\S11]{librospringer2ed} defined the class of domains \emph{having the Local Extension Property} and proved several properties of slice regular functions on such domains: for instance, the existence of a $*$-algebra structure~\cite[Corollary 11.16]{librospringer2ed}, the Strong Identity Principle~\cite[Corollary 11.19]{librospringer2ed}, the existence of the regular reciprocal $f^{-*}$ of a slice regular function $f$~\cite[Proposition 11.28]{librospringer2ed}, the Maximum Modulus Principle~\cite[Theorem 11.53]{librospringer2ed}, the Minimum Modulus Principle~\cite[Theorem 11.54]{librospringer2ed}, the Open Mapping Theorem~\cite[Theorem 11.57]{librospringer2ed}, the Local Cauchy Formula~\cite[Proposition 11.58]{librospringer2ed} and its volumetric analog~\cite[Proposition 11.59]{librospringer2ed}, as well as the existence of local spherical series expansions~\cite[Theorem 11.63]{librospringer2ed}. Our Corollary~\ref{cor:localextension} implies that the properties just listed are true on general speared domains.

By applying Remark~\ref{rmk:union} to $f^{J_0}$, we obtain another corollary of Theorem~\ref{thm:localextension}. The corollary strengthens~\cite[Theorem 3.4]{localrepresentation} and extends it to speared domains.

\begin{corollary}[Semi-global representation formula for speared domains]\label{cor:localrepresentationformula}
Let $\OO\subseteq\hh$ be a speared domain, let $f:\OO\to\hh$ be a slice regular function and let $J_0\in\s$. There exists a speared open subset $\Lambda\subseteq\OO$, including both $\core(\OO)$ and $\OO_{J_0}^\geqslant$, such that the formula
\[f(\alpha+\beta I)=(J-K)^{-1} \left[J f(\alpha+\beta J) - K f(\alpha+\beta K)\right] + I (J-K)^{-1} \left[f(\alpha+\beta J) - f(\alpha+\beta K)\right]\]
holds for all $I,J,K\in\s$ and all $\alpha,\beta\in\rr$ with $J\neq K, \beta\geq0$ and $\alpha+\beta I,\alpha+\beta J,\alpha+\beta K\in \Lambda$.
\end{corollary}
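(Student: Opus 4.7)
The plan is to deduce this representation formula directly from the Semi-global Extension Theorem~\ref{thm:localextension} combined with the representation formula for symmetric speared open sets in Remark~\ref{rmk:union}. The essential observation is that the desired identity is already known on the symmetric speared open set $\OO^{J_0}$ for the induced function $f^{J_0}\in\sr(\OO^{J_0})$; the extension theorem lets us transport it back to $f$ on a large enough subset of $\OO$.

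More concretely, first I would apply Theorem~\ref{thm:localextension} to $f$ and $J_0$ to obtain a speared open set $\Lambda$ with
\[\core(\OO)\cup\OO_{J_0}^{\geqslant}\subseteq\Lambda\subseteq\OO\cap\OO^{J_0}\]
and such that $f_{|_\Lambda}=f^{J_0}_{|_\Lambda}$. This is the set that will appear in the statement. Next, I would invoke Remark~\ref{rmk:union} applied to the slice regular function $f^{J_0}$ on the symmetric speared open set $\OO^{J_0}$: this yields, for all distinct $J,K\in\s$, all $I\in\s$ and all $\alpha,\beta\in\rr$ with $\beta\geq 0$ such that $\alpha+\beta I\in\OO^{J_0}$, the identity
\[f^{J_0}(\alpha+\beta I)=(J-K)^{-1}\bigl[J f^{J_0}(\alpha+\beta J)-K f^{J_0}(\alpha+\beta K)\bigr]+I(J-K)^{-1}\bigl[f^{J_0}(\alpha+\beta J)-f^{J_0}(\alpha+\beta K)\bigr].\]
Note that since $\OO^{J_0}$ is symmetric, the points $\alpha+\beta J$ and $\alpha+\beta K$ automatically lie in $\OO^{J_0}$ as soon as $\alpha+\beta I$ does, so no additional hypothesis on the right-hand side points is needed for the $f^{J_0}$ formula.

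Finally, suppose $\alpha+\beta I,\alpha+\beta J,\alpha+\beta K$ all lie in $\Lambda$, with $\beta\geq 0$ and $J\neq K$. Since $\Lambda\subseteq\OO\cap\OO^{J_0}$ and $f_{|_\Lambda}=f^{J_0}_{|_\Lambda}$, each occurrence of $f$ at these three points may be replaced by the corresponding value of $f^{J_0}$, and the identity for $f^{J_0}$ established above becomes exactly the claimed identity for $f$. I expect no genuine obstacle in this argument: all the technical work has been absorbed into Theorem~\ref{thm:localextension} and Remark~\ref{rmk:union}; the only subtlety is to verify that the inclusions $\Lambda\subseteq\OO$ and $\Lambda\subseteq\OO^{J_0}$ are the right ones to legitimately identify $f$ with $f^{J_0}$ at all three sample points simultaneously, which is immediate from the choice of $\Lambda$ provided by the extension theorem.
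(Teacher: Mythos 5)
Your proof is correct and takes essentially the same route as the paper: the paper's proof is exactly the one-line observation that one applies Remark~\ref{rmk:union} to $f^{J_0}$ on the symmetric speared open set $\OO^{J_0}$, then uses the $\Lambda$ from Theorem~\ref{thm:localextension} to identify $f$ with $f^{J_0}$ at the three sample points. Your additional remark about symmetry of $\OO^{J_0}$ guaranteeing membership of $\alpha+\beta J$ and $\alpha+\beta K$ is a correct and helpful clarification that the paper leaves implicit.
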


The last corollary allows us to extend~\cite[Definition 6]{perotti} and~\cite[Definition 3.1]{geometricfunctiontheory} to all speared domains.

\begin{definition}\label{def:sphericalvalueanderivative}
In the situation described in Corollary~\ref{cor:localrepresentationformula}, for all $\alpha,\beta\in\rr$ with $\beta\geq0$ such that $x_0:=\alpha+\beta J_0\in\OO$, we define the \emph{spherical value} of $f$ at $x_0$ as the quaternion
\[f^\circ_s(x_0):=(J-K)^{-1} \left[J f(\alpha+\beta J) - K f(\alpha+\beta K)\right]\,,\]
which does not depend on the specific choice of $J\neq K$ such that $\alpha+\beta J,\alpha+\beta K\in \Lambda$. If, moreover, $\beta>0$, we define the \emph{spherical derivative} of $f$ at $x_0$ as the quaternion
\[f'_s(x_0):=\beta^{-1}(J-K)^{-1} \left[f(\alpha+\beta J) - f(\alpha+\beta K)\right]\,,\]
which does not depend on the specific choice of $J\neq K$ such that $\alpha+\beta J,\alpha+\beta K\in \Lambda$.
\end{definition}

\begin{remark}\label{rmk:sphericalvalueandderivative}
Let $\OO\subseteq\hh$ be a speared domain, let $f:\OO\to\hh$ be a slice regular function and let $\{F^J:D^J\to\hh_\cc\}_{J\in\s}$ denote the holomorphic stem family associated to $f$. Fix $J_0\in\s$ and decompose the $\hh_\cc$-valued function $F^{J_0}$ as $F^{J_0}_1+\ui F^{J_0}_2$, where $F^{J_0}_1,F^{J_0}_2$ are $\hh$-valued. Then, for all $\alpha,\beta\in\rr$ with $\beta\geq0$ such that $\alpha+\beta J_0\in\OO$, we have $f^\circ_s(\alpha+\beta J_0)=F^{J_0}_1(\alpha+\beta\ui)$ and, if $\beta>0$, $f'_s(\alpha+\beta J_0)=\beta^{-1}F^{J_0}_2(\alpha+\beta\ui)$. As a consequence, the functions $f^\circ_s:\OO\to\hh$ and $f'_s:\OO\setminus\rr\to\hh$ are real analytic.
\end{remark}

Corollary~\ref{cor:localrepresentationformula} implies the following properties. We recall that the \emph{slice (or complex) derivative} of any slice regular function $f:\OO\to\hh$ is the slice regular function $f'_c:\OO\to\hh$ defined as
\[f'_c(\alpha+\beta I):=\frac12\left(\frac{\partial}{\partial\alpha}+I\frac{\partial}{\partial\beta}\right)f_I(\alpha+\beta I)\]
at any $\alpha+\beta I\in\OO$ (see~\cite[Definition 1.8]{librospringer2ed}). If $f$ happens to be $C^1$, let us denote the real differential of $f$ at a point $x_0\in\OO$ as $df_{x_0}$.

\begin{proposition}\label{prop:sphericalvalueandderivative}
Let $\OO\subseteq\hh$ be a speared domain and let $f:\OO\to\hh$ be a slice regular function.
\begin{enumerate}
\item For all $x\in\OO\setminus\rr$, the equality $f(x)=f^\circ_s(x)+\im(x)f'_s(x)$ holds, while $f(x)=f^\circ_s(x)$ at all $x_0\in\OO\cap\rr$.
\item If we fix $\alpha,\beta\in\rr$ with $\beta>0$ and set $S:=\alpha+\beta\s$, then the maps 
\begin{align*}
&S\cap\OO\to\hh\quad J\mapsto f^\circ_s(\alpha+\beta J)\,,\\
&S\cap\OO\to\hh\quad J\mapsto f'_s(\alpha+\beta J)
\end{align*}
are locally constant, whence constant on each connected component of $S\cap\OO$.
\item $f$ is a locally slice function according to~\cite[Definition 3.6]{geometricfunctiontheory} (see also~\cite[Definition 11.6]{librospringer2ed}).
\item At each $x_0\in\OO\cap\rr$, for any $v\in T_{x_0}\OO\simeq\hh$, we have $df_{x_0} v = v f'_c(x_0)$. As a consequence, $df_{x_0}$ is singular if, and only if, $f'_c(x_0)=0$. At each $x_0\in\OO\setminus\rr$, say $x_0\in\OO_{J}^>$, if we decompose $T_{x_0}\OO\simeq\hh$ as $\cc_J\oplus\cc_J^\perp$, then for any $v\in\cc_J$ and any $w\in\cc_J^\perp$ we have $df_{x_0} (v+w) = v f'_c(x_0)+w f'_s(x_0)$. As a consequence, $df_{x_0}$ is singular if, and only if, $f'_c(x_0)f'_s(x_0)^c\in\cc_J^\perp$.
\end{enumerate}
\end{proposition}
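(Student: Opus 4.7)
The plan is to leverage the local structure provided by Theorem~\ref{thm:localextension}: around any $x_0 \in \OO$, the function $f$ coincides with $f^{J_0} = \I(F^{J_0})$ for a suitable $J_0$, where $F^{J_0} = F^{J_0}_1 + \ui F^{J_0}_2$ is the holomorphic stem function of Theorem~\ref{thm:singleindex}. Combined with Remark~\ref{rmk:sphericalvalueandderivative}, which identifies $f^\circ_s(\alpha+\beta J_0)$ with $F^{J_0}_1(\alpha+\beta\ui)$ and $f'_s(\alpha+\beta J_0)$ with $\beta^{-1}F^{J_0}_2(\alpha+\beta\ui)$, this bridges Definition~\ref{def:sphericalvalueanderivative} with the stem representation. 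Parts 1--3 then follow fairly directly: Part 1 reads off from $f(\alpha+\beta J_0) = F^{J_0}_1(\alpha+\beta\ui) + J_0 F^{J_0}_2(\alpha+\beta\ui)$ (choosing $J_0$ so that $\beta \geq 0$), with $F^{J_0}_2(\alpha) = 0$ at real points because $F^{J_0}$ is a stem function; Part 2 is a direct consequence of Lemma~\ref{lem:connectedcomponent}, since two points in the same connected component of $(\alpha+\beta\s)\cap\OO$ share a common local stem function near $\alpha+\beta\ui$; and Part 3 is just a rephrasing of Corollary~\ref{cor:localextension}, which supplies a local slice extension triplet at every point.

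Part 4 is the main work. At a real point $x_0$, Corollary~\ref{cor:localextension} together with Lemma~\ref{lem:representationball} yields a local power series $f(x_0+h) = \sum_n h^n a_n$, from which $df_{x_0} v = v a_1 = v f'_c(x_0)$ is immediate; since right-multiplication on $\hh$ by a quaternion is singular iff that quaternion vanishes, $df_{x_0}$ is singular iff $f'_c(x_0) = 0$. At a non-real $x_0 \in \OO_J^>$, for $v \in \cc_J$ the holomorphicity of $f_J$ on the slice directly yields $df_{x_0} v = v f'_c(x_0)$. For $w \in \cc_J^\perp$, one must move off the slice: parameterize $x_0 + tw = \alpha + \beta(t) L(t)$ with $\beta(t) := \sqrt{\beta^2+t^2}$ and $L(t) := (\beta J + tw)/\beta(t)$, so that $\beta'(0) = 0$ and $L'(0) = w/\beta$; differentiating the local slice extension $f^J(x_0+tw) = F^J_1(\alpha+\beta(t)\ui) + L(t) F^J_2(\alpha+\beta(t)\ui)$ at $t = 0$ gives $df_{x_0} w = (w/\beta) F^J_2(\alpha+\beta\ui) = w f'_s(x_0)$, and linearity extends the formula to all of $\cc_J^\perp$.

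The singularity analysis at a non-real $x_0$ then reduces to linear algebra in $\hh$: the map $v+w \mapsto v f'_c(x_0) + w f'_s(x_0)$ fails to be injective iff some $(v,w) \in \cc_J \times \cc_J^\perp$, not both zero, satisfies $v f'_c(x_0) = -w f'_s(x_0)$. If $f'_s(x_0) = 0$, any $w \neq 0$ lies in the kernel and the criterion $f'_c(x_0) f'_s(x_0)^c \in \cc_J^\perp$ holds trivially; otherwise one sets $w = -v f'_c(x_0) f'_s(x_0)^{-1}$ and uses the fact that left multiplication by nonzero elements of $\cc_J$ bijectively preserves $\cc_J^\perp$ (verified from $vJ = Jv$ for $v \in \cc_J$ and $wJ = -Jw$ for $w \in \cc_J^\perp$) to reduce the existence of such a nonzero $v$ to $f'_c(x_0) f'_s(x_0)^{-1} \in \cc_J^\perp$, equivalently $f'_c(x_0) f'_s(x_0)^c \in \cc_J^\perp$. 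The main obstacle is the perpendicular-direction computation of $df_{x_0} w$ in Part 4, which requires careful handling of the moving-sphere parameterization and consistent use of the stem function representation off the reference slice.
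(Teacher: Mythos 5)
Your proof is correct, and for Parts 1--3 it takes essentially the same route as the paper: Corollary~\ref{cor:localrepresentationformula} (equivalently, the local coincidence $f=f^{J_0}$ from Theorem~\ref{thm:localextension} combined with Remark~\ref{rmk:sphericalvalueandderivative}) immediately gives the local stem-function representation from which Parts 1--2 are read off, and Part 3 follows either from those or directly from Corollary~\ref{cor:localextension} as you do. The genuine difference is in Part 4: the paper disposes of it in one line by citing~\cite[Proposition 3.5]{geometricfunctiontheory} (with further references to~\cite[\S8.4 and \S8.5]{librospringer2ed} and~\cite{altavilladifferential,gporientation,conformality} for the symmetric case), whereas you rebuild the differential computation from scratch. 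Your argument is sound: the real-point case follows from a local power-series expansion, the tangential (slice) directions from holomorphicity of $f_J$ on the slice, and the normal directions $w\in\cc_J^\perp$ from the clean parameterization $x_0+tw=\alpha+\beta(t)L(t)$ with $\beta'(0)=0$ and $L'(0)=w/\beta$ applied to the stem representation $f=F^J_1+L\,F^J_2$; the ensuing linear-algebra reduction of the singularity criterion (using that left multiplication by a nonzero element of $\cc_J$ preserves the splitting $\cc_J\oplus\cc_J^\perp$) is also correct, including the degenerate cases $f'_s(x_0)=0$ and $f'_c(x_0)=0$. What your route buys over the paper's is a self-contained verification that does not defer to prior literature; what the paper's citation buys is brevity, and also insulates the statement from a small misprint in the text's displayed formula for $f'_c$ (which should carry a minus sign, $\tfrac12(\partial_\alpha - I\partial_\beta)$, exactly as you implicitly use). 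Two minor points worth flagging for precision: you should state that you take $w$ of unit norm when writing $\beta(t)=\sqrt{\beta^2+t^2}$, and you should note that the formula $f(x_0+tw)=F^J_1(\alpha+\beta(t)\ui)+L(t)F^J_2(\alpha+\beta(t)\ui)$ is valid for $t$ small because the speared open set $\Lambda$ of Theorem~\ref{thm:localextension} is open in $\hh$, hence contains a Euclidean ball around $x_0$. Neither is a real gap.
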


\begin{proof}
Points {\it 1.} and {\it 2.} are immediate consequences of Corollary~\ref{cor:localrepresentationformula}. In turn, they imply point {\it 3.} Point {\it 4.} now follows from~\cite[Proposition 3.5]{geometricfunctiontheory} (see also~\cite[\S8.4 and \S8.5]{librospringer2ed} and~\cite{altavilladifferential,gporientation,conformality} for the case when $\OO$ is a symmetric slice domain).
\end{proof}

In view of property {\it 3.} of Proposition~\ref{prop:sphericalvalueandderivative}, we point out that our Definition~\ref{def:sphericalvalueanderivative} is consistent with~\cite[Definition 11.6]{librospringer2ed}.


\section{Hinged points and global extension theorem for hinged domains}\label{sec:hinged}

\subsection{Hinged points}\label{subsec:hingedpoints}

\begin{assumption}
Throughout this subsection, we assume $\OO\subseteq\hh$ to be a speared domain and we adopt the notations
\[
D^J:=\phi_J^{-1}(\OO_J^\geqslant)\cup\overline{\phi_J^{-1}(\OO_J^\geqslant)}, \quad E^{J,K}:=\phi_J^{-1}(\OO_J^\geqslant)\cap\phi_K^{-1}(\OO_K^\geqslant),\quad D^{J,K}:=E^{J,K}\cup\overline{E^{J,K}}\]
for all distinct $J,K\in\s$. Moreover, we assume $f:\OO\to\hh$ to be a slice regular function and denote by $\{F^{J,K}:D^{J,K}\to\hh_\cc\}_{J,K\in\s, J\neq K}$ the double-index holomorphic stem family associated to $f$ and by $\{F^J:D^J\to\hh_\cc\}_{J\in\s}$ the holomorphic stem family associated to $f$.
\end{assumption}

As customary, in the previous assumption the symbol $\overline{E}$ denotes the image of a set $E\subseteq\rr_\cc$ through conjugation $z\mapsto\bar z$. We now define and study several relations on $\OO$, which will be useful to prove our Global Extension Theorem.

\begin{figure}[htbp]
\centering
\includegraphics[height=4cm]{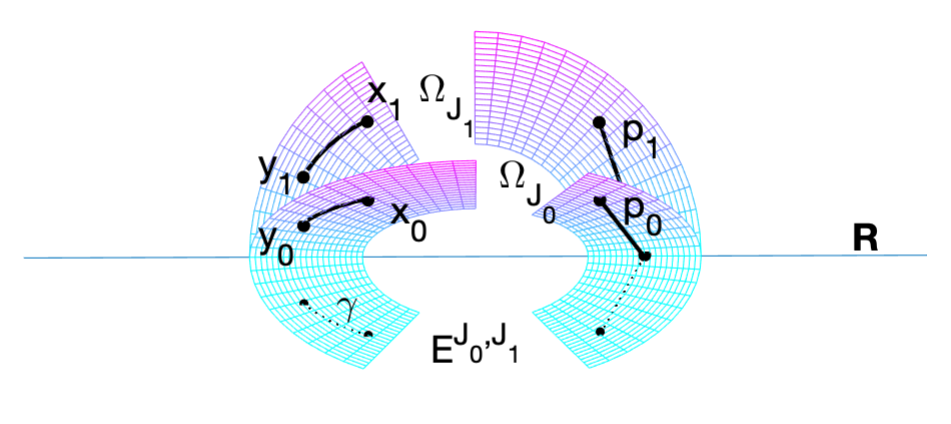}
\caption{An example where $(x_1,y_1)$ shadows $(x_0,y_0)$ in $\Omega$ and $p_1$ is strongly hinged to $p_0$ in $\Omega$.}\label{fig:shadowing}
\end{figure}

\begin{definition}\label{def:stronglyhinged}
Let us choose $\alpha,\alpha',\beta,\beta'\in\rr,J_0,J_1\in\s$ (with $\beta,\beta'\geq0$) such that $x_0=\alpha+\beta J_0,x_1=\alpha+\beta J_1,y_0=\alpha'+\beta' J_0,y_1=\alpha'+\beta' J_1$ all belong to $\OO$.

We say that $(x_1,y_1)$ \emph{shadows} $(x_0,y_0)$ in $\OO$ if there exists a path $\gamma:[0,1]\to E^{J_0,J_1}$ such that $\gamma(0)=\alpha+\beta\ui$ and $\gamma(1)=\alpha'+\beta'\ui$.

We say that $x_1$ is \emph{strongly hinged to $x_0$ in $\OO$} if there exists $y \in \OO \cap \rr$ such that $(x_1,y)$ shadows $(x_0,y)$.
\end{definition}

The expression ``strongly hinged'', used here for the first time, subsumes ideas already used in the articles~\cite{douren1,dourensabadini,geometricfunctiontheory,localrepresentation}. Figure~\ref{fig:shadowing} portrays an example of shadowing and an example of point that is strongly hinged to another point.

\begin{remark}\label{rmk:stronglyhinged}
Shadowing is a partial equivalence relation on $\bigcup_{J\in\s}(\OO_J^\geqslant\times\OO_J^\geqslant)$, i.e., it is symmetric and transitive. If $\OO_J^\geqslant$ is path-connected, then the relation is also reflexive, whence an equivalence relation, on $\OO_J^\geqslant\times\OO_J^\geqslant$. By construction, if $(x_1,y_1)$ shadows $(x_0,y_0)$, then: $\s_{x_1}=\s_{x_0}$; $\s_{y_1}=\s_{y_0}$; and $(y_1,x_1)$ shadows $(y_0,x_0)$.

Being strongly hinged is a reflexive and symmetric relation on $\OO$. By construction, if $x_1$ is strongly hinged to $x_0$, then $\s_{x_1}=\s_{x_0}$.
\end{remark}

For future use, we establish the following lemma.

\begin{lemma}\label{lem:stronglyhinged}
Let us choose $\alpha,\alpha',\beta,\beta'\in\rr,J_0,J_1\in\s$ (with $\beta,\beta'\geq0$) such that $x_0=\alpha+\beta J_0,x_1=\alpha+\beta J_1,y_0=\alpha'+\beta' J_0,y_1=\alpha'+\beta' J_1$ all belong to $\OO$.
\begin{enumerate}
\item If $(x_1,y_1)$ shadows $(x_0,y_0)$ and if $F^{J_0},F^{J_1}$ coincide in a neighborhood of $\alpha'+\beta'\ui$, then there exists a path-connected open subset of $D^{J_0,J_1}$ including both $\alpha+\beta\ui$ and $\alpha'+\beta'\ui$ where $F^{J_0},F^{J_1}$ coincide.
\item If $x_1$ is strongly hinged to $x_0$, then there exists speared open subset of $D^{J_0,J_1}$ including $\alpha+\beta\ui$ where $F^{J_0},F^{J_1}$ coincide. In particular, the connected component of $D^{J_0,J_1}$ including $\alpha+\beta\ui$ intersects the real axis.
\end{enumerate}
\end{lemma}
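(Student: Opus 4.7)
The plan is to prove both parts via the identity principle for holomorphic $\hh_\cc$-valued maps applied on the open subset $D^{J_0,J_1}$ of $\rr_\cc$. First, I note that $D^{J_0,J_1}\subseteq D^{J_0}\cap D^{J_1}$: by construction $E^{J_0,J_1}\subseteq\phi_{J_0}^{-1}(\OO_{J_0}^\geqslant)\subseteq D^{J_0}$ and analogously for $D^{J_1}$, while $D^{J_0}$ and $D^{J_1}$ are both invariant under conjugation $z\mapsto\bar z$. Consequently the restrictions of $F^{J_0}$ and $F^{J_1}$ to $D^{J_0,J_1}$ are both holomorphic maps $(D^{J_0,J_1},\ui)\to(\hh_\cc,\ui)$.

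For part 1, the shadowing path $\gamma:[0,1]\to E^{J_0,J_1}\subseteq D^{J_0,J_1}$ places $\alpha+\beta\ui$ and $\alpha'+\beta'\ui$ in the same connected component $C$ of $D^{J_0,J_1}$. Since $\rr_\cc$ is locally path-connected, $C$ is an open path-connected subset of $\rr_\cc$. The difference $F^{J_0}-F^{J_1}$ is holomorphic on $C$ and vanishes on the open neighborhood of $\alpha'+\beta'\ui$ afforded by the hypothesis; the classical identity principle, applied to the components after identifying $\hh_\cc\simeq\cc^4$, forces $F^{J_0}=F^{J_1}$ throughout $C$, and $C$ itself is the subset required by the statement.

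For part 2, let $y\in\OO\cap\rr$ be a real point such that $(x_1,y)$ shadows $(x_0,y)$. By Lemma~\ref{lem:connectedcomponent}, $F^{J_0}$ and $F^{J_1}$ coincide on $\core_{\rr_\cc}(\OO)$. I claim that $\core_{\rr_\cc}(\OO)\subseteq D^{J_0,J_1}$: any $z=\alpha+\gamma\ui\in\core_{\rr_\cc}(\OO)$ satisfies $\alpha+\gamma J_0,\alpha+\gamma J_1\in\core(\OO)\subseteq\OO$, and by symmetry of $\core(\OO)$ also $\alpha-\gamma J_0,\alpha-\gamma J_1\in\OO$, so either $z$ (when $\gamma\geq 0$) or $\bar z$ (when $\gamma\leq 0$) lies in $E^{J_0,J_1}$. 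Hence $F^{J_0}=F^{J_1}$ in an open neighborhood of $y\in\rr$ inside $D^{J_0,J_1}$. Applying part 1 to the shadowing $(x_1,y)$ of $(x_0,y)$ produces a path-connected open subset $U\subseteq D^{J_0,J_1}$ containing both $\alpha+\beta\ui$ and $y$, on which $F^{J_0}=F^{J_1}$. Since $U$ is connected and meets $\rr$ at $y$, it is automatically speared; the connected component of $D^{J_0,J_1}$ containing $\alpha+\beta\ui$ includes $U$, hence contains the real point $y$.

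The only delicate step is the inclusion $\core_{\rr_\cc}(\OO)\subseteq D^{J_0,J_1}$, which, however, is a direct unravelling of definitions using the symmetry of $\core(\OO)$; everything else is a routine application of the identity principle combined with the openness and path-connectedness of connected components of open subsets of $\rr_\cc$.
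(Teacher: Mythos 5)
Your proof is correct, and the overall structure matches the paper's: in Part~1, find a connected open subset of $D^{J_0,J_1}$ containing both $\alpha+\beta\ui$ and $\alpha'+\beta'\ui$ and apply the identity principle; in Part~2, use the real point $y$ from the definition of ``strongly hinged'' together with Lemma~\ref{lem:connectedcomponent} to produce the hypothesis needed for Part~1. The difference is in how you obtain the connected open set in Part~1. The paper builds an explicit tube $C_\varepsilon$ around the shadowing path via Lemma~\ref{lem:neighborhoodcompact} (a quantitative construction involving the distance to $\hh\setminus Y$), and argues that this tube lies in $D^{J_0}\cap D^{J_1}=D^{J_0,J_1}$. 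You instead notice that the path's image already lies in $E^{J_0,J_1}\subseteq D^{J_0,J_1}$, so the two endpoints belong to the same connected component $C$ of $D^{J_0,J_1}$, and since $D^{J_0,J_1}$ is open (implicit in Definition~\ref{def:double-index}, where $F^{J,K}$ is a holomorphic stem function on $D^{J,K}$), $C$ is open and path-connected. This is a genuine, if mild, simplification: it replaces the metric estimate of Lemma~\ref{lem:neighborhoodcompact} with a purely topological observation, and it suffices because later uses of Part~1 (in Theorem~\ref{thm:hingedpoints}) only rely on its exact conclusion, not on the particular shape of the set. Your extra verification that $\core_{\rr_\cc}(\OO)\subseteq D^{J_0,J_1}$ in Part~2 is correct but not strictly needed, since it is already implicit in the statement of Lemma~\ref{lem:connectedcomponent}, which asserts that $F^{J_0}$ and $F^{J_1}$ (whose domains are $D^{J_0}$ and $D^{J_1}$) both coincide on $\core_{\rr_\cc}(\OO)$.
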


\begin{proof}
\begin{enumerate}
\item There exists a path $\gamma:[0,1]\to E^{J_0,J_1}$ such that $\gamma(0)=\alpha+\beta\ui$ and $\gamma(1)=\alpha'+\beta'\ui$. By Lemma~\ref{lem:neighborhoodcompact}, the path-connected open neighborhood $C_\varepsilon$ of $C:=\gamma([0,1])$ in $\rr_\cc$ is included in $D^{J_0}$ and in $D^{J_1}$. As a consequence, $C_\varepsilon$ is contained in $D^{J_0}\cap D^{J_1}=D^{J_0,J_1}$. Since the holomorphic maps $F^{J_0},F^{J_1}$ coincide near the point $\alpha'+\beta'\ui \in C_\varepsilon$, it follows that they coincide throughout the connected open set $C_\varepsilon$ (which includes $\alpha+\beta\ui$, too).
\item There exists $y=\alpha' \in \OO \cap \rr$ such that $(x_1,y)$ shadows $(x_0,y)$. Moreover, Lemma~\ref{lem:connectedcomponent} guarantees that $F^{J_0},F^{J_1}$ coincide in the neighborhood $\core_{\rr_\cc}(\OO)$ of $\alpha'$. If we repeat the construction made in point {1.}, we find a speared open subset $C_\varepsilon$ of $D^{J_0,J_1}$ including $\alpha+\beta\ui$ where $F^{J_0},F^{J_1}$ coincide.\qedhere
\end{enumerate}
\end{proof}

In preparation for the main result in this section, we will need one more relation.

\begin{definition}\label{def:hinged}
Take $t\in\nn^*:=\nn\setminus\{0\}$ and let $T:=\{0,\ldots,t\}$. Let us consider a finite (ordered) sequence $\{x_s\}_{s=0}^t$ in $\OO$. For each $s,s'\in T$ with $|s-s'|=1$, we say that there is a \emph{simple step} at $s_0:=\min\{s,s'\}$ if one of the following conditions is fulfilled:
\begin{enumerate}
\item $x_{s'}$ belongs to the connected component of $\s_{x_s}\cap\OO$ that includes $x_s$;
\item $x_{s'}$ is strongly hinged to $x_s$.
\end{enumerate}
If conditions {1.} and {2.} are false, but
\begin{enumerate}
\item[3.] there exist $s'',s'''\in T\setminus\{s,s'\}$ with $s''-s'''=s'-s$ such that $(x_{s''},x_{s'''})$ shadows $(x_s,x_{s'})$,
\end{enumerate}
and if we set $\underline{s}:=\min\{s,s',s'',s'''\},\overline{s}:=\max\{s,s',s'',s'''\}-1$, then we say that there is a \emph{double step} at $(\underline{s},\overline{s})$.

The sequence $\{x_s\}_{s=0}^t$ is called a \emph{chain of length $t$} connecting $x_0$ and $x_t$ if, after erasing from $T$ the index $t$ and all indices $s_0$ where a simple step happens, there remain in $T$ an even number $2\ell$ of indices, which can be listed in a form $\underline{s}_1,\overline{s}_1,\ldots,\underline{s}_\ell,\overline{s}_\ell$ such that, for all $m,n \in \{1,\ldots,\ell\}$:
\begin{itemize}
\item a double step happens at $(\underline{s}_m,\overline{s}_m)$ (whence $\underline{s}_m<\overline{s}_m$);
\item $\underline{s}_1<\ldots<\underline{s}_\ell$; in other words, the double steps are listed in the same order they start; and
\item if $m<n$, then either $\overline{s}_m<\underline{s}_n$ or $\overline{s}_m>\overline{s}_n$; in other words, if the $n$th double step starts before the $m$th double step is concluded, then the $n$th double step must also end before the $m$th double step does; equivalently, the double step at $(\underline{s}_n,\overline{s}_n)$ may be nested within the double step at $(\underline{s}_m,\overline{s}_m)$, but not intertwined with it.
\end{itemize}

For all points $x,x'\in\OO$, we say that $x'$ is \emph{hinged} to $x$ in $\OO$, and we write $x\sim x'$, if there exists a chain connecting $x$ and $x'$.
\end{definition}

An example of chain of length $4$ is portrayed in the forthcoming Figure~\ref{fig:chain}.

We will soon prove that $\sim$ is an equivalence relation. Before doing so, we establish a few useful technical results.

\begin{remark}\label{rmk:spheres}
Let us consider a finite sequence $\{x_s\}_{s=0}^t\subset\OO$. If there is a simple step at $s$, then $\s_{x_s}=\s_{x_{s+1}}$. If there is a double step at $(\underline{s},\overline{s})$, then $\s_{x_{\underline{s}}}=\s_{x_{\overline{s}+1}}$ and $\s_{x_{\underline{s}+1}}=\s_{x_{\overline{s}}}$. This is a consequence of the definitions of simple and double step, as well as of Remark~\ref{rmk:stronglyhinged}.
\end{remark}

\begin{lemma}\label{lem:chain}
If $\{x_s\}_{s=0}^t$ is a chain of length $t\geq2$ connecting $x_0$ and $x_t$, then one of the following facts is true.
\begin{enumerate}
\item There exists $u$ with $0<u<t$ such that $\{x_s\}_{s=0}^{u}$ is a chain of length $u$ connecting $x_0$ and $x_{u}$ (whence $x_0\sim x_{u}$) and $\{x_s\}_{s=u}^{t}$ is a chain of length $t-u$ connecting $x_{u}$ and $x_t$ (whence $x_{u}\sim x_t$).
\item There is a double step at $(0,t-1)$ and $\{x_s\}_{s=1}^{t-1}$ is a chain of length $t-2$ connecting $x_1$ and $x_{t-1}$ (whence $x_1\sim x_{t-1}$).
\end{enumerate}
The second case is excluded when $t=2$.
\end{lemma}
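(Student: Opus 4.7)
The plan is to argue combinatorially from the bracket-nesting structure of the double steps. Let $(\underline{s}_1,\overline{s}_1),\ldots,(\underline{s}_\ell,\overline{s}_\ell)$ be the double steps of the given chain, listed so that $\underline{s}_1<\cdots<\underline{s}_\ell$. The nesting condition amounts to saying that the intervals $[\underline{s}_m,\overline{s}_m]$ form a laminar family, exactly like balanced parentheses, with the remaining indices in $\{0,\ldots,t-1\}$ being simple steps. My task is to cut or shrink the chain so as to preserve this laminar structure on each piece.

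First I will dispose of the case $\ell=0$: every step is then simple, and for any $u$ with $0<u<t$ both $\{x_s\}_{s=0}^u$ and $\{x_s\}_{s=u}^t$ consist only of simple steps, so case 1 holds (for instance with $u=1$). Next, if $\ell\geq 1$ and $\underline{s}_1>0$, the step at index $0$ cannot appear among the $\underline{s}_m,\overline{s}_m$ (all of which are at least $\underline{s}_1>0$) and is therefore simple; taking $u=1$ then works, since the sub-chain $\{x_s\}_{s=1}^t$ retains all double steps together with their nesting.

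The interesting case is $\underline{s}_1=0$, which I will split according to whether $\overline{s}_1<t-1$ or $\overline{s}_1=t-1$. In the first sub-case I set $u:=\overline{s}_1+1$: the left sub-chain $\{x_s\}_{s=0}^u$ inherits the double step $(0,u-1)$ together with those nested strictly inside it (characterized by $\underline{s}_n>0$ and $\overline{s}_n<\overline{s}_1$), while the right sub-chain $\{x_s\}_{s=u}^t$ inherits precisely the double steps disjoint from the first (those with $\underline{s}_n\geq u$). In the second sub-case, the first double step spans the whole interval, so by nesting every other double step satisfies $1\leq\underline{s}_n<\overline{s}_n\leq t-2$; consequently $\{x_s\}_{s=1}^{t-1}$ inherits all remaining double steps with their nesting intact and is a chain of length $t-2$, which is case 2.

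The main obstacle is verifying carefully that on each piece the simple/double classification of every step is unchanged and that the four indices defining each inherited double step still lie inside the restricted index range, so that the nesting inequalities survive restriction to a contiguous index interval; both the disjointness relation $\overline{s}_m<\underline{s}_n$ and the strict-containment relation $\overline{s}_m>\overline{s}_n$ are plainly preserved under such restriction. The final observation about $t=2$ is immediate: a double step requires four distinct indices in $T$, which is impossible when $|T|=3$, so necessarily $\ell=0$ and we are forced into case 1.
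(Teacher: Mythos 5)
Your proof is correct and follows essentially the same route as the paper's: the paper opens by asking whether there is a simple step at index $0$, while you ask whether $\ell=0$ or $\underline{s}_1>0$ — an equivalent dichotomy — and then both arguments set $u:=\overline{s}_1+1$ when $\underline{s}_1=0,\,\overline{s}_1<t-1$, and invoke the nesting condition to show that each double step with $m>1$ is either strictly nested inside $(0,\overline{s}_1)$ or starts at or after $u$, so that the chain property is inherited by both pieces. Your added remark justifying the $t=2$ exclusion (a double step requires four distinct indices, impossible when $|T|=3$) is a nice touch that the paper leaves implicit.
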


\begin{proof}
If there is a simple step at $0$, then the sequence $\{x_s\}_{s=0}^{1}$ is a chain of length $1$ connecting $x_0$ and $x_1$ and there exists no double step of the form $(0,s)$. Thus, $\{x_s\}_{s=1}^{t}$ is automatically a chain of length $t-1$ connecting $x_1$ and $x_{t}$. Overall, we are in case {\it 1.}

Suppose, instead, that there is not a simple step at $0$. Then, if we list all double steps as in Definition~\ref{def:hinged}, we find $(0,\overline{s}_1),\ldots,(\underline{s}_\ell,\overline{s}_\ell)$ for some natural number $\ell$. We separate two cases.

\begin{itemize}
\item If $\overline{s}_1<t-1$ and we set $u:=\overline{s}_1+1$, then we are in case {\it 1.} because $\{x_s\}_{s=0}^u$ is a chain of length $u$ connecting $x_0$ and $x_u$ and $\{x_s\}^{t}_{s=u}$ is a chain of length $t-u$ connecting $x_u$ and $x_{t}$. Indeed, every simple step in the original chain will still be a simple step in one of the two subsequences. Every double step $(\underline{s}_m,\overline{s}_m)$ with $m>1$ will be a double step in the first subsequence if it is nested within $(0,\overline{s}_1)$ (i.e., if $0<\underline{s}_m<\overline{s}_m<\overline{s}_1$) or a double step in the second subsequence if it starts after the double step at $(0,\overline{s}_1)$ is concluded (i.e., if $\overline{s}_1+1=u\leq\underline{s}_m$).

\item If $\overline{s}_1=t-1$, then there is a double step at $(0,t-1)$. We are in case {\it 2.} because $\{x_s\}_{s=1}^{t-1}$ is a chain of length $t-2$ connecting $x_1$ and $x_{t-1}$. Indeed, every simple step in the original chain will still be a simple step in the subsequence. Every double step $(\underline{s}_m,\overline{s}_m)$ with $m>1$ will be a double step in the subsequence because it is nested within the double step at $(0,t-1)$ (i.e., $0<\underline{s}_m<\overline{s}_m<t-1$).\qedhere
\end{itemize}
\end{proof}

\begin{lemma}
The relation $\sim$ is an equivalence relation on $\OO$. Moreover, $x\sim x'$ implies $\s_x=\s_{x'}$.
\end{lemma}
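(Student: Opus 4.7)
The plan is to reduce all four claims (reflexivity, symmetry, transitivity of $\sim$, and $\s_x = \s_{x'}$ whenever $x \sim x'$) to inductions on the chain length $t$, using Lemma~\ref{lem:chain} as the canonical decomposition tool and Remark~\ref{rmk:spheres} to transport sphere equalities across steps. I would dispatch the sphere claim first: by strong induction on $t$, Lemma~\ref{lem:chain} either splits the chain into two shorter chains $\{x_s\}_{s=0}^u$ and $\{x_s\}_{s=u}^t$ (giving $\s_{x_0}=\s_{x_u}=\s_{x_t}$ by the inductive hypothesis), or exhibits a double step at $(0,t-1)$, in which case Remark~\ref{rmk:spheres} gives $\s_{x_0}=\s_{x_t}$ directly. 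The base case $t=1$ forces a simple step at $0$ and is handled by Remark~\ref{rmk:spheres} as well.

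Reflexivity I would obtain by exhibiting the length-$1$ sequence $(x,x)$ as a chain: condition 1 of Definition~\ref{def:stronglyhinged} holds trivially, producing a simple step at $0$, and after erasing indices $0$ and $t=1$ no indices remain, so the chain condition is satisfied with $\ell=0$. For transitivity, I would concatenate chains $\{x_s\}_{s=0}^t$ and $\{y_s\}_{s=0}^u$ (with $x_t=y_0$) into a sequence of length $t+u$. Every simple or double step of either half remains valid within the concatenation, and since all double steps of the first half end at an index $\leq t-1$ while all double steps of the second half begin at an index $\geq t$, the two sets of double steps are pairwise disjoint and the nesting/ordering condition is automatic.

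Symmetry is the nontrivial claim, which I would prove by induction on $t$: if $\{x_s\}_{s=0}^t$ is a chain then so is the reversed sequence $\{x_{t-s}\}_{s=0}^t$. The base case $t=1$ is immediate since both clauses of simple-step (conditions 1 and 2 of Definition~\ref{def:stronglyhinged}) are symmetric in the ordered pair (condition 2 by Remark~\ref{rmk:stronglyhinged}). For $t\geq 2$ I apply Lemma~\ref{lem:chain} again. In case 1, I reverse each of the two shorter subchains using the inductive hypothesis and concatenate. In case 2, the inner chain $\{x_s\}_{s=1}^{t-1}$ reverses by induction, and the outer double step at $(0,t-1)$ survives reversal: any pairing of the four indices $\{0,1,t-1,t\}$ that produces a shadowing in the original produces a shadowing between the correspondingly reversed pairs (after a possible swap of pair arguments, which is licit by Remark~\ref{rmk:stronglyhinged}), giving a double step at $(0,t-1)$ in the reversed sequence.

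The main obstacle I anticipate is the bookkeeping of the nesting/ordering condition on double steps under reversal: a direct index-shift approach forces one to track a tree of nested intervals and verify that its reverse listing still satisfies the condition. Lemma~\ref{lem:chain} is precisely what bypasses this, by canonically decomposing any chain into either a concatenation of shorter chains or a single outermost double step enclosing an inner chain. Both patterns transform transparently under reversal and under concatenation, which is why the induction closes cleanly.
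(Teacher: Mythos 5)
Your argument is correct in substance, and for reflexivity, transitivity, and the sphere claim it coincides with the paper's proof (same length-$1$ chain for reflexivity, same concatenation for transitivity, same induction via Lemma~\ref{lem:chain} and Remark~\ref{rmk:spheres} for the sphere claim). For symmetry you take a genuinely different route. The paper argues \emph{directly}: the index involution $s\mapsto t-s-1$ sends a simple step at $s_0$ to a simple step at $t-s_0-1$, and a double step at $(\underline{s},\overline{s})$ to a double step at $(t-\overline{s}-1,t-\underline{s}-1)$, which is enough because an order-reversing involution preserves the nested/disjoint structure of the intervals (a point the paper leaves implicit). You instead run an induction on $t$ using the canonical decomposition of Lemma~\ref{lem:chain}. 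This is a legitimate alternative and, as you observe, shifts the burden away from tracking the laminar family of double-step intervals; but it shifts it onto a \emph{converse} to Lemma~\ref{lem:chain} that you use without stating in Case~2, namely: if $\{x_s\}_{s=1}^{t-1}$ is a chain and there is a double step at $(0,t-1)$ in $\{x_s\}_{s=0}^t$, then $\{x_s\}_{s=0}^t$ is a chain. That is true (the extra indices $0,t-1$ are not simple-step indices, and the outer interval $(0,t-1)$ strictly contains all remaining double-step intervals of the inner chain, so the erase-and-pair condition persists), but it is a separate verification that should be spelled out. One small slip: the simple-step conditions~1 and~2 that you invoke live in Definition~\ref{def:hinged}, not Definition~\ref{def:stronglyhinged}.
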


\begin{proof}
The relation $\sim$ is reflexive: for each $x\in\OO$, it holds $x\sim x$ because we can set $t=1, x_0=x, x_1=x$ and observe that there is a simple step at $0$ because $x_0,x_1$ fulfill condition {1.} of Definition \ref{def:hinged}.

The relation $\sim$ is symmetric: if $\{x_s\}_{s=0}^t$ is a chain connecting $x$ and $x'$, then $\{x_{t-s}\}_{s=0}^t$ is a chain connecting $x'$ and $x$. Indeed, if there is a simple step at $s_0$ in the original sequence, then there is a simple step at $t-s_0-1$ in the new sequence because conditions {1.} or {2.} are unchanged when we swap $s$ and $s'$. If there is a double step at $(\underline{s},\overline{s})$ in the original sequence, then there is a double step at $(t-\overline{s}-1,t-\underline{s}-1)$ in the new sequence because condition {3.} stays equivalent if we swap $s$ and $s'$, while also swapping $s''$ and $s'''$.

The relation $\sim$ is transitive. Assume $x\sim x'$ and $x'\sim x''$. Let $\{x_s\}_{s=0}^t$ be a chain connecting $x$ and $x'$ and let $\{y_u\}_{u=0}^{t'}$ be a chain connecting $x'$ and $x''$. We form a longer sequence $x_0=x,x_1,\ldots,x_t=x',x_{t+1},\ldots,x_{t+t'}=x''$ with $x_{t+u}:=y_u$ for all $u \in \{1,\ldots,t'\}$ and argue that $\{x_s\}_{s=0}^{t+t'}$ is a chain of length $t+t'$ connecting $x$ and $x''$, thus proving that $x\sim x''$. Indeed, any simple step happening at $s_0$ in the first short sequence will still be a simple step at $s_0$ in the long sequence, while any simple step happening at $u_0$ in the second short sequence corresponds to a simple step at $t+u_0$ in the long sequence. If, according to Definition~\ref{def:hinged}, the double steps in the first short sequence can be listed as $(\underline{s}_1,\overline{s}_1),\ldots,(\underline{s}_\ell,\overline{s}_\ell)$ and the double steps in the second short sequence can be listed as $(\underline{u}_1,\overline{u}_1),\ldots,(\underline{u}_{\ell'},\overline{u}_{\ell'})$, then the double steps in the long sequence can be listed as $(\underline{s}_1,\overline{s}_1),\ldots,(\underline{s}_\ell,\overline{s}_\ell),(t+\underline{u}_1,t+\overline{u}_1),\ldots,(t+\underline{u}_{\ell'},t+\overline{u}_{\ell'})$. Here, $\underline{s}_1<\ldots<\underline{s}_\ell<t+\underline{u}_1<\ldots<t+\underline{u}_{\ell'}$ and no double steps are intertwined because they were not in either short list and because $\max\{\overline{s}_1,\ldots,\overline{s}_\ell\}<t\leq t+\underline{u}_1=\min\{ t+\underline{u}_1,\ldots, t+\underline{u}_{\ell'}\}$.

We have therefore proven the first statement.

The second statement can be proven by induction on the length $t$ of the chain $\{x_s\}_{s=0}^t$ connecting $x$ and $x'$. If $t=1$, then there is a simple step at $0$: Remark~\ref{rmk:spheres} guarantees that $\s_{x}=\s_{x_0}=\s_{x_1}=\s_{x'}$. Now suppose the thesis true for all $t<t_0$ and let us prove it for $t=t_0$. If there exists $u$ such that $\{x_s\}_{s=0}^{t_0}$ breaks at $s=u$ into two chains of lengths less than $t_0$, then the inductive hypothesis yields that $\s_{x}=\s_{x_u}=\s_{x'}$. If there exists no such $u$, then Lemma~\ref{lem:chain} guarantees that there is a double step at $(0,t_0-1)$. In such a case, Remark~\ref{rmk:spheres} guarantees that $\s_{x}=\s_{x_0}=\s_{x_{t_0}}=\s_{x'}$.
\end{proof}

Hinged points have an extremely relevant property: if $x\sim x'$, then the local representation of $f$ near $x'$ is consistent with the local representation of $f$ near $x$. Equivalently, the corresponding holomorphic stem functions locally coincide. 

\begin{theorem}\label{thm:hingedpoints}
Let $\OO\subseteq\hh$ be a speared domain, let $f:\OO\to\hh$ be a slice regular function and let $\{F^J:D^J\to\hh_\cc\}_{J\in\s}$ be the holomorphic stem family associated to $f$. Let $\alpha_0,\beta_0\in\rr$ (with $\beta_0\geq0$) and $I,I'\in\s$ be such that $x=\alpha_0+\beta_0 I,x'=\alpha_0+\beta_0 I' \in \OO$. If $x\sim x'$, then there exists an open neighborhood $U$ of $\alpha_0+\beta_0\ui$ in $D^{I,I'}$ where $F^{I},F^{I'}$ coincide. As a consequence, for all ($\alpha,\beta\in\rr$ with $\beta\geq0$ such that) $\alpha+\beta\ui\in U$, the equality $f^\circ_s(\alpha+\beta I)=f^\circ_s(\alpha+\beta I')$ holds and the equality $f'_s(\alpha+\beta I)=f'_s(\alpha+\beta I')$ holds provided $\beta>0$.
\end{theorem}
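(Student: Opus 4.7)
I would induct on the length $t$ of a chain $\{x_s\}_{s=0}^t \subset \OO$ witnessing $x \sim x'$, showing at each stage that $F^I$ and $F^{I'}$ agree on an open neighborhood of $\alpha_0 + \beta_0\ui$ in $D^{I,I'}$. Once this is established, the consequences for $f^\circ_s$ and $f'_s$ will follow by decomposing $F^I = F^I_1 + \ui F^I_2$ and $F^{I'} = F^{I'}_1 + \ui F^{I'}_2$ and invoking the formulas of Remark~\ref{rmk:sphericalvalueandderivative}.

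\emph{Base case $t=1$.} A simple step at $0$ means either that $x'$ lies in the connected component of $\s_x \cap \OO$ containing $x$, in which case Lemma~\ref{lem:connectedcomponent} directly furnishes an open neighborhood of $\alpha_0 + \beta_0\ui$ in $D^{I,I'}$ on which $F^I=F^{I'}$; or that $x'$ is strongly hinged to $x$, in which case Lemma~\ref{lem:stronglyhinged} (part 2) does the same.

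\emph{Inductive step $t > 1$.} I would invoke Lemma~\ref{lem:chain}. In its first alternative the chain splits at some $u$ into sub-chains of lengths $u<t$ and $t-u<t$ connecting $x$ to $x_u$ and $x_u$ to $x'$; since $\sim$ preserves spheres, $x_u = \alpha_0+\beta_0 J$ for some $J\in\s$, and the inductive hypothesis yields open neighborhoods of $\alpha_0 + \beta_0\ui$ in $D^{I,J}$ and $D^{J,I'}$ on which $F^I=F^J$ and $F^J=F^{I'}$, respectively; their intersection lies in $D^{I,I'}$ and is the required neighborhood. In the second alternative there is a double step at $(0,t-1)$ and $\{x_s\}_{s=1}^{t-1}$ is a chain of length $t-2$. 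Unpacking Definition~\ref{def:hinged} across each admissible ordering of $(s,s',s'',s''')$ would show that $x_1$ and $x_{t-1}$ lie on a common auxiliary sphere, with $x_1 = \alpha'+\beta' I$ and $x_{t-1} = \alpha'+\beta' I'$, and that the shadowing furnishes a path in $E^{I,I'}$ from $\alpha_0+\beta_0\ui$ to $\alpha'+\beta'\ui$. The inductive hypothesis then gives $F^I=F^{I'}$ on a neighborhood of $\alpha'+\beta'\ui$, and Lemma~\ref{lem:stronglyhinged} (part 1) transports this equality along the shadowing path to produce an open, path-connected subset of $D^{I,I'}$ containing $\alpha_0+\beta_0\ui$ on which $F^I=F^{I'}$.

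\textbf{Main obstacle.} The delicate point is the bookkeeping in the double-step case: regardless of which of the four orderings of $(s,s',s'',s''')$ with $\min=0$ and $\max=t$ is realised, I need the imaginary units of $x_1$ and $x_{t-1}$ to come out to be precisely $I$ and $I'$, so that the inductive hypothesis applies exactly to the pair of stem functions whose coincidence I wish to propagate via shadowing. This compatibility is built into Definition~\ref{def:hinged}, but verifying it uniformly across the admissible subcases (while also checking that the shadowing endpoints are $\alpha_0+\beta_0\ui$ and $\alpha'+\beta'\ui$ in $E^{I,I'}$) is what will consume most of the care of the proof.
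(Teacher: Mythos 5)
Your proposal is correct and follows essentially the same route as the paper's proof: induction on chain length, with the base case handled via Lemma~\ref{lem:connectedcomponent} and part~2 of Lemma~\ref{lem:stronglyhinged}, and the inductive step handled via the dichotomy in Lemma~\ref{lem:chain}, using an intersection of neighborhoods in the split case and the inductive hypothesis together with part~1 of Lemma~\ref{lem:stronglyhinged} in the double-step case. The "main obstacle" you flag is real but is exactly what the paper disposes of by citing the definition of double step and Remark~\ref{rmk:spheres}, and your intended resolution is the same.
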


\begin{proof}
Since $x\sim x'$, there exist a natural number $t\geq1$ and a chain $\{x_s\}_{s=0}^t$ connecting $x$ and $x'$. We prove the thesis by induction on the length $t$ of the chain.

If $t=1$, then there is a simple step at $0$. If $x'=x_1$ belongs to the connected component of $\s_{x}\cap\OO$ that includes $x=x_0$, then the thesis follows immediately from Lemma~\ref{lem:connectedcomponent}. If, instead, $x'=x_1$ is strongly hinged to $x=x_0$, then case {\it 2.} in Lemma~\ref{lem:stronglyhinged} yields the thesis.

Now choose $t_0\geq2$, suppose the thesis true for all $t<t_0$ and let us prove it for $t=t_0$.
\begin{itemize}
\item Let us first assume that there exists $u$ with $0<u<t_0$ such that $\{x_s\}_{s=0}^{t_0}$ breaks at $s=u$ into two chains of lengths $u$ and $t_0-u$. In particular, $x\sim x_u$ and $x_u\sim x'$, whence $x_u=\alpha_0+\beta_0 I''$ for some $I''\in\s$. Since $u$ and $t_0-u$ are both strictly less than $t_0$, the inductive hypothesis yields that there exist open neighborhoods $U,U'$ of $\alpha_0+\beta_0\ui$ in $D^{I,I''},D^{I'',I'}$ (respectively) such that $F^{I},F^{I''}$ coincide in $U$ and $F^{I''},F^{I'}$ coincide in $U'$. It follows that $U\cap U'$ is an open neighborhood of $\alpha_0+\beta_0\ui$ in $D^{I,I''}\cap D^{I'',I'}\subseteq D^{I,I'}$ where $F^{I},F^{I'}$ coincide, as desired.
\item Let us now assume, instead, that there exists no $u$ with $0<u<t_0$ such that $\{x_s\}_{s=0}^{t_0}$ breaks at $s=u$ into two chains. Lemma~\ref{lem:chain} guarantees that $t_0\geq3$ and that there is a double step at $(0,t_0-1)$ and that $\{x_s\}_{s=1}^{t_0-1}$ is a chain of length $t_0-2$ from $x_1$ to $x_{t_0-1}$. In particular, by the definition of double step and by Remark~\ref{rmk:spheres}, there exist $\alpha_1,\beta_1\in\rr$ (with $\beta_1\geq0$) such that $x_1=\alpha_1+\beta_1 I,x_{t_0-1}=\alpha_1+\beta_1I'$. Since $t_0-2<t_0$, the inductive hypothesis guarantees that there exists an open neighborhood of $\alpha_1+\beta_1\ui$ in $D^{I,I'}$ where $F^{I},F^{I'}$ coincide. Since $(x_{t_0},x_{t_0-1})$ shadows $(x_0,x_1)$, point {\it 1.} in Lemma~\ref{lem:stronglyhinged} guarantees that there exists a path-connected open subset of $D^{I,I'}$ including both $\alpha_0+\beta_0\ui$ and $\alpha_1+\beta_1\ui$ where $F^{I},F^{I'}$ coincide.
\end{itemize}
To prove the last statement, it suffices to perform the following computation, where we apply Remark~\ref{rmk:sphericalvalueandderivative}:
\begin{align*}
&f^\circ_s(\alpha+\beta I)=F^{I}_1(\alpha+\beta\ui)=F^{I'}_1(\alpha+\beta\ui)=f^\circ_s(\alpha+\beta I')\,,\\
&f'_s(\alpha+\beta I)=\beta^{-1}F^{I}_2(\alpha+\beta\ui)=\beta^{-1}F^{I'}_2(\alpha+\beta\ui)=f'_s(\alpha+\beta I')
\end{align*}
for $\alpha+\beta\ui\in U$.
\end{proof}

\subsection{Hinged domains and extension theorem}\label{subsec:extensiontheorem}

This subsection is devoted to the class of domains described in the next definition and proves for them the Global Extension Theorem and the Representation Formula.

\begin{definition}\label{def:hingeddomains}
A speared domain $\OO\subseteq\hh$ is termed \emph{hinged} if, for each $x\in\OO$, it holds $x\sim x'$ for all $x'\in\s_x\cap\OO$; or, equivalently, if $\s_x\cap\OO$ is an equivalence class under the equivalence relation $\sim$.
\end{definition}

Over a hinged domain, we can draw the following consequence from Theorem~\ref{thm:hingedpoints}.

\begin{corollary}\label{cor:hingeddomain}
Let $\OO\subseteq\hh$ be a hinged domain, let $f:\OO\to\hh$ be a slice regular function and let $\{F^J:D^J\to\hh_\cc\}_{J\in\s}$ be the holomorphic stem family associated to $f$. If we set $D:=\bigcup_{J\in\s}D^J$, then there exists a holomorphic stem function $F:D\to\hh_\cc$ such that, for each $J\in\s$, the function $F^J$ is the restriction $F_{|_{D^J}}$.
\end{corollary}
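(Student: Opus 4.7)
The plan is to define $F:D\to\hh_\cc$ pointwise by setting $F(z):=F^J(z)$ for any $J\in\s$ such that $z\in D^J$, then to verify well-definedness, and finally to check that $F$ is a holomorphic stem function. The identity $F_{|_{D^J}}=F^J$ will be built into the construction, so everything reduces to these three checks. I would also note at the outset that $D=\bigcup_{J\in\s}D^J$ is an open subset of $\rr_\cc$, invariant under $z\mapsto\bar z$ because each $D^J$ is, and that whenever $z\in D^J\cap D^{J'}$ with $J\neq J'$ one actually has $z\in D^{J,J'}$ (a trivial set-theoretic check using $\phi_J^{-1}(\OO_J^\geqslant)\subseteq\rr_\cc^\geqslant$ and $\overline{\phi_J^{-1}(\OO_J^\geqslant)}\subseteq\rr_\cc^\leqslant$, plus the observation that the cross-intersections reduce to $\OO\cap\rr$).

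For the main step, suppose $z\in D^J\cap D^{J'}$ with $J\neq J'$; write $z=\alpha+\ui\beta$ and first treat the case $\beta\geq0$. Then $x:=\alpha+\beta J$ and $x':=\alpha+\beta J'$ both belong to $\s_x\cap\OO$. The hypothesis that $\OO$ is hinged forces $x\sim x'$ (trivially if $\beta=0$, because then $x=x'=\alpha$). Theorem~\ref{thm:hingedpoints} now supplies an open neighborhood of $\alpha+\beta\ui$ in $D^{J,J'}$ on which $F^J$ and $F^{J'}$ agree, and in particular $F^J(z)=F^{J'}(z)$. The case $\beta<0$ follows at once from the case just handled, applied to $\bar z$, together with the stem-function identity $F^J(z)=\overline{F^J(\bar z)}$ satisfied by every member of the family $\{F^J\}_{J\in\s}$.

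Once $F$ is well defined, the remaining verifications are routine. The stem-function identity $F(\bar z)=\overline{F(z)}$ holds because, picking any $J$ with $z\in D^J$, we have $\bar z\in D^J$ and hence
\[F(\bar z)=F^J(\bar z)=\overline{F^J(z)}=\overline{F(z)}.\]
For holomorphy at a prescribed $z\in D$, one picks $J$ with $z\in D^J$; the open set $D^J\subseteq\rr_\cc$ is then an $\rr_\cc$-open neighborhood of $z$ on which $F$ coincides with the holomorphic stem function $F^J$, so $F$ is holomorphic at $z$. Holomorphy being local, we conclude that $F:D\to\hh_\cc$ is a holomorphic stem function, and by construction $F_{|_{D^J}}=F^J$ for every $J\in\s$.

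The only genuinely nontrivial ingredient is the well-definedness step in the case $\beta>0$: the global combinatorial hypothesis packaged into the definition of hinged domain is exactly what is needed to force $x\sim x'$ for every pair of points lying on the same $\s_x\cap\OO$, after which the hard work has already been done in Theorem~\ref{thm:hingedpoints}. So I expect the proof to be essentially an unpacking of that theorem together with the definition of hinged domain, with no further analytic input required.
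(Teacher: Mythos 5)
Your proposal is correct and follows essentially the same route as the paper: define $F$ by gluing the $F^J$, then use the hinged hypothesis to obtain $x\sim x'$ and invoke Theorem~\ref{thm:hingedpoints} for well-definedness, with holomorphy and the stem property being local consequences. The only difference is that you spell out the set-theoretic identity $D^J\cap D^{J'}=D^{J,J'}$ and the $\beta<0$ case via the conjugation symmetry, both of which the paper treats as immediate.
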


\begin{proof}
For each $J\in\s$, we define $F$ to equal $F^J$ in $D^J$. The function $F$ is well defined: by construction, for each $\alpha,\beta\in\rr$ (with $\beta\geq0$) such that $\alpha+\beta\ui\in D^J\cap D^{J'}=D^{J,J'}$, the equivalence $\alpha+\beta J\sim \alpha+\beta J'$ holds, whence $F^J$ and $F^{J'}$ agree near $\alpha+\beta\ui$ by Theorem~\ref{thm:hingedpoints}. Since each $F^J$ is a holomorphic stem function, it follows that $F$ is a holomorphic stem function, as desired.
\end{proof}

We are now in a position to prove the announced Global Extension Theorem. It had been proven in~\cite[Theorem 4.5]{localrepresentation} only for the so-called simple slice domains. In the forthcoming Subsection~\ref{subsec:hingeddomains}, we will recall the definition of simple slice domain, prove that all simple slice domains are hinged and provide many examples of hinged domains that are not simple.

\begin{theorem}[Global extension for hinged domains]\label{thm:extension}
Let $\OO\subseteq\hh$ be a hinged domain and let $f:\OO\to\hh$ be a slice regular function. Then there exists a unique slice regular function $\widetilde{f}:\widetilde{\OO}\to\hh$ that extends $f$ to the symmetric completion $\widetilde{\OO}$ of its domain $\OO$.
\end{theorem}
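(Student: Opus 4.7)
The idea is to realize $\widetilde{f}$ as the slice function induced by the single holomorphic stem function produced by Corollary~\ref{cor:hingeddomain}, then to derive uniqueness from the rigidity of holomorphic stem functions on a connected set.

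First I would apply Corollary~\ref{cor:hingeddomain} to obtain the holomorphic stem function $F:D\to\hh_\cc$, with $D=\bigcup_{J\in\s}D^J$, restricting to $F^J$ on each $D^J$. The essential identification is $\OO_D=\widetilde{\OO}$: since circularization commutes with unions, $\OO_D=\bigcup_{J\in\s}\OO_{D^J}=\bigcup_{J\in\s}\OO^J$, and a direct inspection of the definition of $D^J$ shows $\OO^J=\widetilde{\OO_J}$, so $\OO_D=\bigcup_{J\in\s}\widetilde{\OO_J}=\widetilde{\OO}$ because every point of $\OO$ lies in some slice $\OO_J$. Setting $\widetilde{f}:=\I(F)$, the remark following Definition~\ref{def:slice-function} then gives $\widetilde f\in\sr(\widetilde{\OO})$, hence $\widetilde{f}$ is slice regular. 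To check that it extends $f$, fix any $x_0=\alpha+\beta J\in\OO$ with $\beta\geq 0$; by construction $\widetilde f(x_0)=\phi_J(F^J(\alpha+\ui\beta))=f^J(x_0)$, and the Semi-global Extension Theorem~\ref{thm:localextension} applied with $J_0=J$ provides a speared set $\Lambda\supseteq\OO_J^\geqslant\ni x_0$ on which $f=f^J$, giving $\widetilde f(x_0)=f(x_0)$.

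For uniqueness, I would first observe that $\widetilde{\OO}$ is a symmetric slice domain in the sense of Subsection~\ref{subsec:slicefunctions}: it is symmetric by construction; connected because $\OO\subset\widetilde{\OO}$ is connected and each added sphere $\s_x$ is itself connected and meets $\OO$ at $x$; and it contains $\OO\cap\rr\neq\emptyset$, which rules out the product domain alternative in the structural decomposition recalled in that subsection. Hence $\widetilde{\OO}=\OO_{D_0}$ for a connected $D_0\subseteq\rr_\cc$ preserved by conjugation with $D_0\cap\rr\neq\emptyset$. If $g,h:\widetilde{\OO}\to\hh$ are two slice regular extensions of $f$, Theorem~\ref{thm:generalrepresentationformula} places both in $\sr(\widetilde{\OO})$, so $g=\I(G)$ and $h=\I(H)$ for holomorphic stem functions $G,H:D_0\to\hh_\cc$; as $g=f=h$ on the nonempty open subset $\OO\cap\rr\subseteq D_0\cap\rr$, we have $G=H$ on an open set, and connectedness of $D_0$ combined with the identity principle for holomorphic maps $(D_0,\ui)\to(\hh_\cc,\ui)$ forces $G\equiv H$, hence $g\equiv h$.

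The main delicate point is the identification $\OO_D=\widetilde{\OO}$ together with showing that $\widetilde{\OO}$ really is a symmetric slice domain (and not just a symmetric open set), since the uniqueness argument depends crucially on the connectedness of the underlying subset $D_0$ of $\rr_\cc$. Once these structural facts are secured, existence is essentially handed to us by Corollary~\ref{cor:hingeddomain} together with Theorem~\ref{thm:localextension}, and uniqueness reduces to the standard identity principle for holomorphic stem functions.
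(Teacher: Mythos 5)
Your existence argument is essentially the paper's: apply Corollary~\ref{cor:hingeddomain} to get the global stem function $F$, note $\OO_D=\widetilde{\OO}$, set $\widetilde f:=\I(F)$, and invoke Theorem~\ref{thm:localextension} to see $\widetilde f=f$ on $\OO=\bigcup_J\OO_J^\geqslant$. One small imprecision: $\OO^J=\OO_{D^J}$ is the symmetric completion of $\OO_J^\geqslant$, not of the whole slice $\OO_J$ (the lower half of the slice contributes spheres via $\OO^{-J}$, not via $\OO^J$), so the intermediate claim $\OO^J=\widetilde{\OO_J}$ is false in general; however, since $\bigcup_{J\in\s}\OO^J=\bigcup_{J\in\s}\widetilde{\OO_J^\geqslant}=\bigcup_{J\in\s}\widetilde{\OO_J}=\widetilde{\OO}$ anyway, the conclusion $\OO_D=\widetilde{\OO}$ stands. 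On uniqueness, the paper leaves it implicit, whereas you supply a correct argument: $\widetilde{\OO}$ is a symmetric slice domain (symmetric, connected because every added sphere meets the connected set $\OO$, and it contains real points, ruling out the product-domain alternative), so any slice regular extension lies in $\sr(\widetilde{\OO})$ by Theorem~\ref{thm:generalrepresentationformula}, and two extensions have stem functions agreeing on $\OO\cap\rr$; the identity principle on the connected $D_0$ then forces them to coincide. (Strictly speaking, $\OO\cap\rr$ is open in $\rr$, not in $\rr_\cc$; what the identity principle uses is that it has an accumulation point in $D_0$, which it does.) This is worthwhile added detail that the paper does not spell out.
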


\begin{proof}
We recall that $\{F^J:D^J\to\hh_\cc\}_{J\in\s}$ denotes the holomorphic stem family associated to $f$ and $\{f^J:\OO^J\to\hh\}_{J\in\s}$ denotes the slice regular family associated to $f$.
Corollary~\ref{cor:hingeddomain} guarantees that there there exists a holomorphic stem function $F$ from the union $D:=\bigcup_{J\in\s}D^J$ to $\hh_\cc$ such that, for each $J\in\s$, the function $F^J$ is the restriction $F_{|_{D^J}}$. We remark that $\OO_D=\widetilde{\OO}$ and define $\widetilde{f}$ to be the element $\I(F)$ of $\sr(\widetilde{\OO})$ induced by $F$. For each $J\in\s$, the definition $f^J:=\I(F^J)$ yields that $f^J$ is the restriction of $\widetilde{f}$ to $\OO^J:=\OO_{D^J}\subseteq\widetilde{\OO}$. We can now apply Theorem~\ref{thm:localextension} to conclude that $f$ coincides with $\widetilde{f}$ in $\bigcup_{J\in\s}\OO_J^\geqslant=\OO$.
\end{proof}

Theorem~\ref{thm:extension} immediately implies the Global Representation Formula for hinged domains.

\begin{corollary}[Global representation formula for hinged domains]
Let $\OO\subseteq\hh$ be a hinged domain and let $f:\OO\to\hh$ be a slice regular function. For all $\alpha,\beta\in\rr$ with $\beta\geq0$ and all $I,J,K\in\s$ with $J\neq K$, such that $\alpha+\beta I,\alpha+\beta J,\alpha+\beta K\in\OO$, the equality
\[f(\alpha+\beta I)=(J-K)^{-1} \left[J f(\alpha+\beta J) - K f(\alpha+\beta K)\right] + I (J-K)^{-1} \left[f(\alpha+\beta J) - f(\alpha+\beta K)\right]\]
holds. As a consequence, for each $x\in\OO$, the spherical value function $f^\circ_s:\OO\to\hh$ is constant on $\s_x\cap\OO$ and, if $x\not\in\rr$, spherical derivative $f'_s:\OO\setminus\rr\to\hh$ is constant on $\s_x\cap\OO$.
\end{corollary}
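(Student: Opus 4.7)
The plan is to deduce this corollary directly from Theorem~\ref{thm:extension} combined with Remark~\ref{rmk:union}, which already gives the representation formula on any symmetric speared open set. First I would invoke Theorem~\ref{thm:extension} to obtain the unique slice regular extension $\widetilde{f}:\widetilde{\OO}\to\hh$ of $f$ to the symmetric completion $\widetilde{\OO}$ of $\OO$.

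Next I would check that $\widetilde{\OO}$ is a symmetric speared open set. Symmetry is built into the symmetric completion. For spearedness, I would use the description of $\widetilde{\OO}$ coming out of the proof of Theorem~\ref{thm:extension}: one has $\widetilde{\OO}=\OO_D$ with $D=\bigcup_{J\in\s}D^J$, where each $D^J=\phi_J^{-1}(\OO_J^\geqslant)\cup\overline{\phi_J^{-1}(\OO_J^\geqslant)}$ is a speared open subset of $\rr_\cc$ (because $\OO$ is a speared domain, each connected component of $\phi_J^{-1}(\OO_J^\geqslant)$ meets $\rr$). By Remark~\ref{rmk:path}, a union of speared open subsets of $\rr_\cc$ is still speared, so $D$ is speared, hence so is $\widetilde{\OO}=\OO_D$.

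Having established this, Remark~\ref{rmk:union} applied to $\widetilde{f}$ yields
\[\widetilde{f}(\alpha+\beta I)=(J-K)^{-1}\left[J\widetilde{f}(\alpha+\beta J)-K\widetilde{f}(\alpha+\beta K)\right]+I(J-K)^{-1}\left[\widetilde{f}(\alpha+\beta J)-\widetilde{f}(\alpha+\beta K)\right]\]
for all $I,J,K\in\s$ with $J\neq K$ and all $\alpha,\beta\in\rr$ with $\beta\geq0$ such that $\alpha+\beta I\in\widetilde{\OO}$. Since $\widetilde{f}_{|_\OO}=f$ and $\OO\subseteq\widetilde{\OO}$, restricting to the case in which $\alpha+\beta I,\alpha+\beta J,\alpha+\beta K$ all lie in $\OO$ delivers the stated formula for $f$.

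For the final consequence, I would observe that in the formula above the first summand $(J-K)^{-1}[Jf(\alpha+\beta J)-Kf(\alpha+\beta K)]$ is independent of $I$; this is exactly $f^\circ_s(\alpha+\beta I)$ by Definition~\ref{def:sphericalvalueanderivative}, so $f^\circ_s$ takes the same value at every point of $\s_x\cap\OO$. Likewise, when $\beta>0$, the factor multiplying $I$ is $\beta f'_s(\alpha+\beta I)$, which is also independent of $I$, yielding constancy of $f'_s$ on $\s_x\cap\OO$. No step presents a genuine obstacle: the whole corollary is a formal consequence of Theorem~\ref{thm:extension} and Remark~\ref{rmk:union}, once the elementary verification that $\widetilde{\OO}$ is a symmetric speared open set is in hand.
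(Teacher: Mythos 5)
Your proposal is correct and follows essentially the route the paper has in mind when it says the corollary is an immediate consequence of Theorem~\ref{thm:extension}: extend $f$ to $\widetilde{f}\in\sr(\widetilde{\OO})$ via the Global Extension Theorem, observe that $\widetilde{\OO}=\OO_D$ with $D=\bigcup_{J\in\s}D^J$ is a symmetric speared open set (each $D^J$ is speared because $\OO$ is, and a union of speared sets is speared by Remark~\ref{rmk:path}), apply Remark~\ref{rmk:union} to $\widetilde{f}$, and restrict to $\OO$. Your identification of the two summands with $f^\circ_s$ and $\beta f'_s$ to deduce the constancy on spheres is likewise in line with Remark~\ref{rmk:sphericalvalueandderivative}.
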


An equivalent statement is: if $\{f^J:\OO^J\to\hh\}_{J\in\s}$ is the slice regular family associated to a slice regular function $f$ on a hinged domain $\OO$, then $f$ coincides with $f^J$ in $\OO\cap\OO^J$.


\section{Study of speared domains and hinged domains}\label{sec:classesofdomains}

This section is devoted to the study of speared domains and hinged domains.

\subsection{Speared domains}\label{subsec:speared}

We begin by proving that the class of speared domains (see Definition~\ref{def:speared}) includes the class of slice domains (see Definition~\ref{def:slicedomain}).

\begin{proposition}
Every quaternionic slice domain is a speared domain.
\end{proposition}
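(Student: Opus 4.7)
The plan is to verify the equivalent path characterization given in Remark~\ref{rmk:path}: for every $I\in\s$ and every $x\in\OO_I^\geqslant$, one must exhibit a continuous path inside $\OO_I^\geqslant$ joining $x$ to some real point (equivalently, inside $\phi_I^{-1}(\OO_I^\geqslant)$, joining $\phi_I^{-1}(x)$ to a real point). The ingredients I can extract from the definition of a slice domain (Definition~\ref{def:slicedomain}) are precisely the two that define it: $\OO\cap\rr\neq\emptyset$, and $\OO_I$ is connected — hence path-connected, since it is an open subset of $\cc_I\simeq\rr^2$.

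Fix $I\in\s$ and $x\in\OO_I^\geqslant$. The case $x\in\rr$ is handled by a constant path, so I assume $x=\alpha_0+\beta_0I$ with $\beta_0>0$. Picking any $\alpha_*\in\OO\cap\rr\subseteq\OO_I$, path-connectedness of $\OO_I$ supplies a continuous path $\delta:[0,1]\to\OO_I$ with $\delta(0)=x$ and $\delta(1)=\alpha_*$. Since $\OO_I$ is not assumed to be symmetric about the real axis, $\delta$ may dip into $\cc_I^<$; this is the only real obstacle, and I remove it by truncating the path at its first crossing of $\rr$.

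Writing $\delta(t)=\alpha(t)+\beta(t)I$ with $\alpha,\beta:[0,1]\to\rr$ continuous, $\beta(0)=\beta_0>0$, and $\beta(1)=0$, I set
\[t_*:=\inf\{t\in[0,1]\,:\,\beta(t)=0\}\,.\]
This set is a nonempty closed subset of $[0,1]$, so the infimum is attained and $\delta(t_*)\in\OO_I\cap\rr$; moreover $t_*>0$, since $\beta(0)>0$. By continuity $\beta$ cannot change sign on $[0,t_*)$, so it stays positive there and $\beta\geq0$ throughout $[0,t_*]$. Therefore $\delta([0,t_*])\subseteq\OO_I^\geqslant$, and this truncated segment — reversed and transported via $\phi_I^{-1}$ — supplies the path demanded by Remark~\ref{rmk:path}. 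I anticipate no deeper obstruction: the construction is local in $x$ and uniform in $I$, so applying it to every $I\in\s$ and every $x\in\OO_I^\geqslant$ yields the speared property for $\OO$ at once.
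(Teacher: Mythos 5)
Your proof is correct, and it takes a genuinely different route from the paper. The paper argues by contrapositive: assuming $\OO$ is not speared, it picks $J\in\s$ and a connected component $\mathcal{C}$ of $\OO_J^\geqslant$ with $\mathcal{C}\cap\rr=\emptyset$, observes that such a $\mathcal{C}$ is automatically a whole connected component of $\OO_J$ (since it is relatively open and closed in $\OO_J$, being contained in $\cc_J^>$), and concludes that either $\OO_J$ is disconnected or $\OO_J=\mathcal{C}$ misses $\rr$, so $\OO$ fails to be a slice domain. Your proof goes forward: it exploits the path-connectedness of the open connected set $\OO_I$, picks a path from $x$ to a real point, and truncates it at the first zero of the $\beta$-coordinate so that the truncated path stays in $\OO_I^\geqslant$. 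Both are correct and about equally short. The paper's argument trades the explicit path construction for the slightly more conceptual topological observation about components of $\OO_J^\geqslant$ versus components of $\OO_J$; your argument is more elementary and constructive, making direct use of Remark~\ref{rmk:path} without needing that component identification.
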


\begin{proof}
Let $\OO\subseteq\hh$ be a domain. If $\OO$ is not speared, then it is not a slice domain. Indeed: assume there exists $J\in\s$ such that $\OO^\geqslant_J$ has a connected component $\mathcal{C}$ with $\mathcal{C}\cap\rr=\emptyset$. Necessarily, $\mathcal{C}$ is also a connected component of $\OO_J$. In this situation, either $\OO_J$ is not connected or $\OO_J=\mathcal{C}$ does not intersect $\rr$, whence $\OO\cap\rr=\emptyset$. In either case, $\OO$ is not a slice domain.
\end{proof}

We now prove that every speared open subset can be locally shrunk to a slice domain.

\begin{proposition}\label{prop:localslicedomain}
Let $U$ be a speared open subset of $\hh$. For every $x_0\in U$, there exists a slice domain $\Lambda_{x_0}\subseteq U$ including $x_0$.
\end{proposition}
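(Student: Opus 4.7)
The argument naturally splits according to whether $x_0$ is real. If $x_0\in U\cap\rr$, the plan is to take $\Lambda_{x_0}:=B(x_0,r)$ for $r>0$ small enough that $B(x_0,r)\subseteq U$: this open ball is connected, meets $\rr$, and has each slice $B(x_0,r)\cap\cc_J$ equal to a planar open disk, so it is immediately a slice domain containing $x_0$. The substantive case is $x_0=\alpha_0+\beta_0 I_0$ with $\beta_0>0$, on which I focus next.

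In this case, I will use Remark~\ref{rmk:path} to extract a path $\gamma:[0,1]\to\phi_{I_0}^{-1}(U_{I_0}^\geqslant)$ from a real point to $\alpha_0+\beta_0\ui$. After replacing $\gamma$ by its restriction to $[T,1]$, where $T:=\max\gamma^{-1}(\rr)$, I may assume $\gamma$ touches $\rr$ only at its starting point $\alpha_1:=\gamma(0)$; then $C:=\gamma([0,1])$ is a compact path-connected subset of $\rr_\cc^\geqslant$ meeting $\rr$ only at $\alpha_1$. Applying Lemma~\ref{lem:neighborhoodcompact} with $Y:=U$ and $J:=I_0$ will produce some $\varepsilon_1>0$ for which $\phi_K(C_{\varepsilon_1})\subseteq U$ holds for every $K\in\scap(I_0,\varepsilon_1)$. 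I then fix $r>0$ with $B(\alpha_1,r)\subseteq U$ and, using compactness of $C$ together with $C\cap\rr=\{\alpha_1\}$, shrink $\varepsilon\in(0,\min\{\varepsilon_1,1\})$ so that $C_\varepsilon\cap\rr\subseteq(\alpha_1-r,\alpha_1+r)\subset B(\alpha_1,r)$. The candidate slice domain will be
\[
\Lambda_{x_0}\;:=\;B(\alpha_1,r)\,\cup\,\bigcup_{K\in\scap(I_0,\varepsilon)}\phi_K(C_\varepsilon).
\]

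It then remains to verify the defining properties of a slice domain. The inclusion $\Lambda_{x_0}\subseteq U$, the membership $x_0=\phi_{I_0}(\alpha_0+\beta_0\ui)\in\Lambda_{x_0}$, and $\alpha_1\in\Lambda_{x_0}\cap\rr$ are immediate. Openness will be checked separately at non-real points (via the local continuity of the decomposition $y=\alpha+\beta K$ of a non-real quaternion, which makes membership in $\scap(I_0,\varepsilon)$ and in $C_\varepsilon$ stable under small perturbation) and at real points of $\Lambda_{x_0}$, which by our choice of $\varepsilon$ lie inside the open ball $B(\alpha_1,r)$. Connectedness follows because every $\phi_K(C_\varepsilon)$ is path-connected and contains $\alpha_1\in B(\alpha_1,r)$.

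The main technical obstacle, which I expect to require the most care, will be to verify that each slice $(\Lambda_{x_0})_J$ is connected. The key observations will be that for every $K\in\scap(I_0,\varepsilon)$ with $K\neq\pm J$ the intersection $\phi_K(C_\varepsilon)\cap\cc_J$ is contained in $\cc_K\cap\cc_J=\rr$, hence in $B(\alpha_1,r)\cap\cc_J$, while the condition $\varepsilon<1$ prevents both $J$ and $-J$ from lying in $\scap(I_0,\varepsilon)$ simultaneously (since $|J-(-J)|=2$). Consequently $(\Lambda_{x_0})_J$ will reduce to the planar disk $B(\alpha_1,r)\cap\cc_J$ together with at most one further piece ($\phi_J(C_\varepsilon)$ if $J\in\scap(I_0,\varepsilon)$, or $\phi_{-J}(C_\varepsilon)\subset\cc_J$ if $-J\in\scap(I_0,\varepsilon)$): this extra piece is path-connected and contains $\alpha_1$, so the union is connected, completing the proof.
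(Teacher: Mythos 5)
Your proof is correct and follows essentially the same strategy as the paper: thicken the path from Remark~\ref{rmk:path} into a ``sail'' $\bigcup_{K\in\scap(I_0,\varepsilon)}\phi_K(C_\varepsilon)$ via Lemma~\ref{lem:neighborhoodcompact} and glue on a ball around a real point, then check that each slice of the union is a disk plus at most a couple of path-connected pieces attached to it. The only difference is a small technical variant in how $C$ is arranged relative to $\rr$: the paper truncates $C$ at the boundary of the disk $\Delta$ around $\alpha$ (so that $\phi_I(C)\subset U\setminus\rr$, and applies Lemma~\ref{lem:neighborhoodcompact} with $Y:=U\setminus\rr$), whereas you truncate $\gamma$ at its last visit to $\rr$ so that $C$ meets $\rr$ only at its initial point $\alpha_1$, apply the lemma with $Y:=U$, and then shrink $\varepsilon$ to push $C_\varepsilon\cap\rr$ inside $B(\alpha_1,r)$. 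Both routes are sound; your additional constraint $\varepsilon<1$, used to exclude $J,-J$ from lying in the same cap, is actually dispensable --- even if both $\phi_J(C_\varepsilon)$ and $\phi_{-J}(C_\varepsilon)$ appear in $(\Lambda_{x_0})_J$, each contains $\alpha_1$ and hence connects to the disk --- but it is a harmless simplification.
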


\begin{proof}
If $x_0$ is a real point $\alpha_0\in\rr$, we can take $\Lambda_{x_0}$ to be any open ball $B(\alpha_0,\delta)$ contained in $U$. We therefore assume $x_0\in U\setminus\rr$. Our proof borrows some techniques already used in the proofs of Theorems~\ref{thm:singleindex} and~\ref{thm:localextension}.

Let $I\in\s$ be such that $x_0\in U_I^>$. By Remark~\ref{rmk:path}, there exists a path $\gamma:[0,1]\to\phi_I^{-1}(U_I^{\geqslant})$ such that $\gamma(0)$ is a real number $\alpha$ and $\phi_I(\gamma(1))=x_0$. We can pick $\delta>0$ such that $B=B(\alpha,\delta)\subseteq U$; let us denote by $\Delta$ the disk of radius $\delta$ centered at $\alpha$ in $\rr_\cc$. We set
\[C:=\gamma([t_0,1]),\quad t_0:=\sup\{t\in[0,1]\,|\,\gamma(t)\in\Delta\}\in(0,1]\,.\]
If we apply Lemma~\ref{lem:neighborhoodcompact} to $C$ and to $Y:=U\setminus\rr$, we conclude that there exists a real number $\varepsilon>0$ with the following property: for the path-connected open neighborhood
\[C_\varepsilon:=\{z\in\rr_\cc \ |\  \mathrm{dist}(z,C)<\varepsilon\}\]
of $C$, it holds $\phi_K(C_\varepsilon)\subset U\setminus\rr$ (whence $\phi_K(C_\varepsilon)\subset U_K^>$) for all $K\in\scap(I,\varepsilon)$. We remark that, since $C$ intersects the boundary of $\Delta$ at $\gamma(t_0)$, its open neighborhood $C_\varepsilon$ intersects $\Delta$ at some point $\alpha_0+\beta_0\ui$. If we define
\[\Lambda_{x_0}:=B\cup\bigcup_{K\in\scap(I,\varepsilon)}\phi_K(C_\varepsilon)\,,\]
then, by construction, $\Lambda_{x_0}$ is an open neighborhood of $x_0$ in $U$ that intersects the real axis in the interval $(\alpha-\delta,\alpha+\delta)$. Moreover, for each $J\in\s$, its slice $\big(\Lambda_{x_0}\big)_{J}$ is connected because it is the union of the following open connected subsets of $\cc_J$: the disk $B_J=B_{-J}$ of radius $\delta$ centered at $\alpha$ in $\cc_J$; if $J\in\scap(I,\varepsilon)$, the path-connected set $\phi_J(C_\varepsilon)$ (which intersects the disk at $\alpha_0+\beta_0J$); and, if $-J\in\scap(I,\varepsilon)$, the path-connected set $\phi_{-J}(C_\varepsilon)$  (which intersects the disk at $\alpha_0-\beta_0J$). The same argument yields that $\Lambda_{x_0}$ is connected, whence a domain.
\end{proof}

We conclude this subsection by proving that the inclusion of the class of slice domains within the class of speared domains is proper. Indeed, we construct a family of examples of speared domains that are not slice domains.

\begin{figure}[htbp]
\centering
\includegraphics[height=7cm]{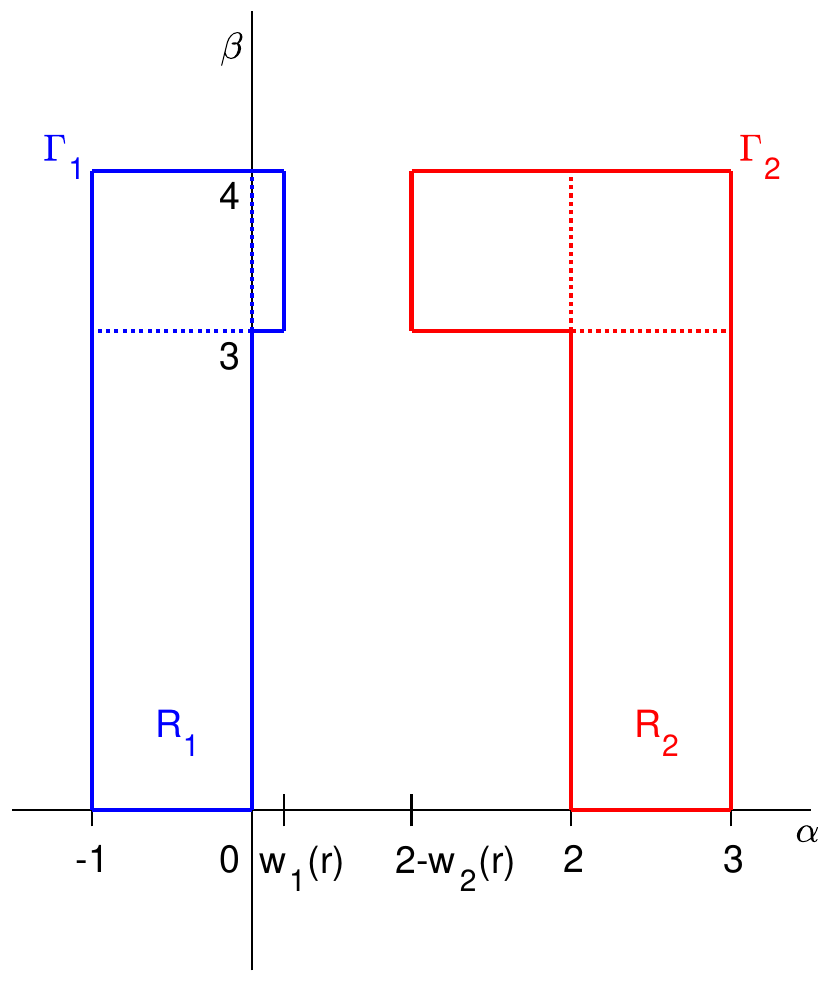}
\caption{The planar domain $D_r=\Gamma_1\cup\Gamma_2$.}\label{fig:Dr}
\end{figure}

\begin{examples}\label{ex:speared}
In $\rr_\cc\simeq\rr^2$, we consider the rectangles $R_1:=(-1,0)\times[0,4)$ and $R_2:=(2,3)\times[0,4)$ and their disjoint union $R:=R_1\cup R_2$. Let us fix two lower semicontinuous width functions $w_1,w_2:[-1,1]\to[0,2]$ such that $w_1(\pm1)=0=w_2(\pm1)$ and such that there exists $r_0\in(-1,1)$ with $w_1(r_0)+w_2(r_0)>2$. We define, for each $r\in[-1,1]$, the $\Gamma$-shaped sets
\[\Gamma_1:=R_1\cup\Big((-1,w_1(r))\times(3,4)\Big),\quad\Gamma_2:=R_2\cup\Big((2-w_2(r),3)\times(3,4)\Big)\,,\]
portrayed in Figure~\ref{fig:Dr}, and their union
\[D_r:=\Gamma_1\cup\Gamma_2\,.\]
Whenever $w_1(r)+w_2(r)\leq2$, as in Figure~\ref{fig:Dr}, then $\Gamma_1$ and $\Gamma_2$ are separate connected components of $D_r$. This is true, in particular for $D_{\pm1}=R$. When, instead, $w_1(r)+w_2(r)>2$, as it happens at $r=r_0$, then $D_r$ is the (connected) $C$-shaped set $C:=R\cup\Big((-1,3)\times(3,4)\Big)$. We always have $D_r\subseteq C$ and $D_r\cap\rr=R\cap\rr=(-1,0)\cup(2,3)$, regardless of $r$. We remark that $R,D_r,C$ (for all $r\in[-1,1]$) are all open subsets of $\rr_\cc^\geqslant$. We define $\underline{\OO}\subseteq\hh$ by choosing $\underline{\OO}_J^\geqslant$ for each $J\in\s$, as follows: for all $x_1,x_2,x_3\in\rr$ such that $x_1^2+x_2^2+x_3^2=1$ (so that $x_1i+x_2j+x_3k\in\s$), we set
\[\underline{\OO}_{x_1i+x_2j+x_3k}^\geqslant:=\phi_{x_1i+x_2j+x_3k}(D_{x_3})\,.\]
We can prove that $\underline{\OO}$ is a speared domain but not a slice domain, as follows.
\begin{itemize}
\item We first prove that $\underline{\OO}$ is an open subset of $\hh$ by picking $\alpha_0+\beta_0 I\in\underline{\OO}$ (with $\alpha_0,\beta_0\in\rr, \beta_0\geq0$ and $I=y_1i+y_2j+y_3k\in\s$) and finding an open neighborhood $U$ of $\alpha_0+\beta_0 I$ included in $\underline{\OO}$. If $\alpha_0+\beta_0\ui\in R$, then we can choose $U$ to be the circularization $\OO_R$ of $R$. If $\alpha_0+\beta_0\ui\in (-1,w_1(y_3))\times(3,4)$, then we can pick $\alpha_1$ such that $w_1(y_3)>\alpha_1>\alpha_0$ and set
\[U:=\{\alpha+\beta(x_1i+x_2j+x_3k)\,|\,\alpha+\beta\ui\in(-1,\alpha_1)\times(3,4),x_1i+x_2j+x_3k\in\s,w_1(x_3)>\alpha_1\}\,.\]
Indeed: $U$ includes $\alpha_0+\beta_0 I$ by construction; $U$ is open because it is the ``product'' of the open rectangle $(-1,\alpha_1)\times(3,4)$ and of the open subset of $\s$ defined by the inequality $w_1(x_3)>\alpha_1$ (which is open because the superlevel set $w_1^{-1}((\alpha_1,2])$ is open). Similarly, if $\alpha_0+\beta_0\ui\in(2-w_2(y_3),3)\times(3,4)$, then we can pick $\alpha_2$ such that $2-w_2(y_3)<\alpha_2<\alpha_0$ and set
\[U:=\{\alpha+\beta(x_1i+x_2j+x_3k)\,|\,\alpha+\beta\ui\in(\alpha_2,3)\times(3,4),x_1i+x_2j+x_3k\in\s,2-w_2(x_3)<\alpha_2\}\,.\]
Indeed: $U$ includes $\alpha_0+\beta_0 I$ by construction; $U$ is open because it is the ``product'' of the open rectangle $(\alpha_2,3)\times(3,4)$ and of the open subset of $\s$ defined by the inequality $2-w_2(x_3)<\alpha_2$ (which is open because the superlevel set $w_2^{-1}((2-\alpha_2,2])$ is open).
\item $\underline{\OO}$ is connected, whence a domain, because, for $J_0:=\sqrt{1-r_0^2}j+r_0k$, the $C$-shaped half-slice
\[\underline{\OO}_{J_0}^\geqslant=\phi_{J_0}(D_{r_0})=\phi_{J_0}(C)\]
is connected and because, for each $J\in\s$, every connected component of $\underline{\OO}_J^\geqslant$ intersects $\underline{\OO}_{J_0}^\geqslant$ in the real interval $(-1,0)$ or in the real interval $(2,3)$.
\item The previous argument also proves that $\underline{\OO}$ is a speared domain.
\item $\underline{\OO}$ is not a slice domain because its slice
\[\underline{\OO}_k=\phi_k(D_1)\cup\phi_{-k}(D_{-1})=\phi_k(R)\cup\phi_{-k}(R)\]
is the disjoint union between the rectangle $\phi_k((-1,0)\times(-4,4))$ and the rectangle $\phi_k((2,3)\times(-4,4))$.
\end{itemize}
We remark that $\spine(\underline{\OO})=B(-1/2,1/2)\cup B(5/2,1/2)$ and $\core(\underline{\OO})$ is the circularization $\OO_R$ of $R=R_1\cup R_2$. The planar open sets $\spine_{\rr_\cc}(\underline{\OO})$ and $\core_{\rr_\cc}(\underline{\OO})$ are portrayed in Figure~\ref{fig:spinecore}.
\end{examples}

\begin{figure}[htbp]
\centering
\includegraphics[height=6.8cm]{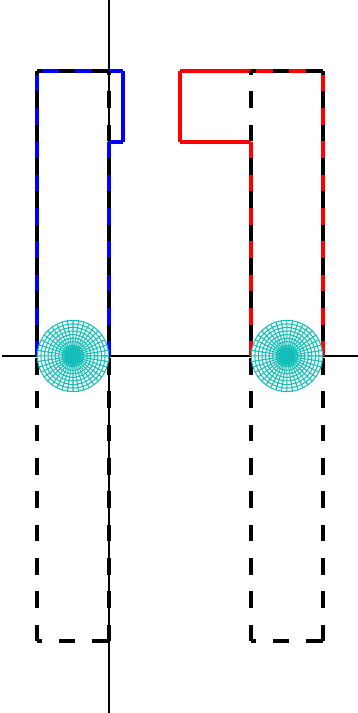}
\caption{The planar set $\spine_{\rr_\cc}(\underline{\OO})$ is the union of the open disks of radius $1/2$ centered at $-1/2$ and at $5/2$ within $\rr_\cc$ (meshed). The planar set $\core_{\rr_\cc}(\underline{\OO})$ is the union of the open rectangles $(-1,0)\times(-4,4)$ and $(2,3)\times(-4,4)$ within $\rr_\cc$ (with dashed boundaries).}\label{fig:spinecore}
\end{figure}

It is an open question whether an interesting theory of slice regular functions on such a large class of domains is also available in several quaternionic variables. Indeed, the theory developed in~\cite{gpseveral} is restricted to (the multidimensional analogs of) symmetric slice domains and product domains. A positive answer to this open question may widen the class of quaternionic manifolds to which the direct approach of~\cite{ggsdirectapproach}, based on the use of slice regular functions in several quaternionic variables, can be applied.

\subsection{Hinged domains}\label{subsec:hingeddomains}

We now prove that the class of hinged domains strictly includes the class of simple slice domains defined in~\cite{localrepresentation}. We rephrase the definition of simple slice domain according to the present setup, as follows.

\begin{definition}\label{def:simple}
A slice domain $\OO\subseteq\hh$ is called \emph{simple} if, for all distinct $J,K\in\s$, the intersection $\phi_J^{-1}(\OO_J^>)\cap\phi_K^{-1}(\OO_K^>)$ is connected.
\end{definition}

Before proving the announced inclusion, we propose a new notion valid for speared domains $\OO$, which is weaker than the property of being simple in the special case when $\OO$ is a slice domain.

\begin{definition}
Let $\OO\subseteq\hh$ be a speared domain. Then $\OO$ is \emph{spear-simple} if, for all distinct $J,K\in\s$, every connected component of the intersection $\phi_J^{-1}(\OO_J^\geqslant)\cap\phi_K^{-1}(\OO_K^\geqslant)$ intersects the real axis.
\end{definition}

We are now ready for the announced result.

\begin{proposition}
\begin{enumerate}
\item Every simple slice domain is spear-simple.
\item Every spear-simple domain is a hinged domain.
\end{enumerate}
\end{proposition}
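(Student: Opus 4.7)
For part 1, let $\OO$ be a simple slice domain. Since every slice domain is speared, $\OO$ is a speared domain; what remains is the component condition. Fix distinct $J,K\in\s$ and set $E:=\phi_J^{-1}(\OO_J^\geqslant)\cap\phi_K^{-1}(\OO_K^\geqslant)$, $E^>:=\phi_J^{-1}(\OO_J^>)\cap\phi_K^{-1}(\OO_K^>)$. Two features will drive the argument: $E$ is open in $\rr_\cc^\geqslant$ (intersection of two sets of this form), $E\cap\rr=\OO\cap\rr$ is nonempty since $\OO$ is a slice domain, and by simplicity $E^>$ is connected. I would argue by contradiction: suppose a component $C$ of $E$ misses $\rr$; then $C\subseteq E^>$, and a standard maximality check shows $C$ is also a component of $E^>$, hence $C=E^>$. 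In particular $E^>$ is closed in $E$, so the complement $E\cap\rr$ is clopen in $E$. To finish, I need to rule this out by showing $E\cap\rr$ cannot be open in $E$: pick $\alpha_0\in E\cap\rr$; since $E$ is open in $\rr_\cc^\geqslant$, any neighborhood of $\alpha_0$ in $E$ contains a half-disk in $\rr_\cc^\geqslant$, which necessarily intersects $\rr_\cc^>$. This contradicts openness of $E\cap\rr$ in $E$, forcing every component of $E$ to meet $\rr$.

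For part 2, let $\OO$ be spear-simple and pick $x\in\OO$ and $x'\in\s_x\cap\OO$. When $x=x'$ the relation $x\sim x'$ is trivial via a length-$1$ chain with a simple step of type 1, so assume $x=\alpha+\beta J_0$ and $x'=\alpha+\beta J_1$ with $J_0\ne J_1$ and $\beta>0$. Put $z:=\alpha+\beta\ui\in\rr_\cc^>$; since $\phi_{J_0}(z)=x$ and $\phi_{J_1}(z)=x'$ both lie in $\OO$, we have $z\in E^{J_0,J_1}$. By spear-simplicity, the connected component of $z$ in $E^{J_0,J_1}$ contains some real point $\alpha'$. Because $E^{J_0,J_1}$ is open in the locally path-connected space $\rr_\cc^\geqslant$, its components are path-connected, so there is a path $\gamma:[0,1]\to E^{J_0,J_1}$ with $\gamma(0)=\alpha'$ and $\gamma(1)=z$. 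Taking $y_0=y_1=\alpha'$ (the real point being equal to $\alpha'+0\cdot J_0=\alpha'+0\cdot J_1$), this path is exactly the witness of Definition~\ref{def:stronglyhinged} that $(x',\alpha')$ shadows $(x,\alpha')$, so $x'$ is strongly hinged to $x$. Hence $\{x,x'\}$ is a chain of length $1$ (simple step of type 2), which gives $x\sim x'$ and proves $\OO$ is hinged.

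The only delicate point in the whole proof is the topological step in part 1: ruling out that $E^>$ itself be a component of $E$. The argument hinges on the fact that $E$ is genuinely open in $\rr_\cc^\geqslant$, not just in $\rr_\cc^>$, so that each real point of $E$ is an interior point along with a surrounding half-disk; this is what prevents $E\cap\rr$ from being open in $E$. Everything else is a direct unpacking of the definitions: spear-simplicity exactly produces the path in $E^{J_0,J_1}$ required by shadowing, and the definition of strongly hinged in Definition~\ref{def:stronglyhinged} reduces the whole equivalence to a single simple step.
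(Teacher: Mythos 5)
Your proof is correct, and part 2 is essentially identical to the paper's argument (spear-simplicity gives a path in $E^{J_0,J_1}$ to a real point, which is exactly the shadowing condition, hence a length-$1$ chain with a simple step of type 2).

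For part 1 you take a different route. The paper observes directly that
\[
E=\phi_J^{-1}(\OO_J^\geqslant)\cap\phi_K^{-1}(\OO_K^\geqslant)=E^>\cup(\OO\cap\rr)
\]
is squeezed between $E^>$ and its closure in $\rr_\cc$ (each real point $\alpha_0\in\OO\cap\rr$ is a limit of points of $E^>$, by the half-disk observation you also use), so $E$ is connected and trivially every component of it meets $\rr$. You instead argue by contradiction: a hypothetical component $C$ of $E$ missing $\rr$ must equal $E^>$ by connectedness of $E^>$ and maximality, forcing $E\cap\rr$ to be clopen in $E$, which the same half-disk observation rules out. Both arguments are sound and rest on the same topological fact; the paper's is shorter because it proves the stronger statement that $E$ is connected, whereas yours more literally checks the definition of spear-simple. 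One small point worth stating explicitly in your argument: the step from ``$C\subseteq E^>$'' to ``$C=E^>$'' uses that $E^>$ is connected and hence sits inside a single component of $E$, which must then coincide with $C$; your phrase ``standard maximality check'' covers this, but spelling it out removes any ambiguity.
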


\begin{proof}
Let $\OO\subseteq\hh$ be a speared domain.
\begin{enumerate}
\item First assume $\OO$ to be a simple slice domain. In particular, $\OO\cap\rr\neq\emptyset$. Moreover, for all distinct $J,K\in\s$, the intersection $\phi_J^{-1}(\OO_J^>)\cap\phi_K^{-1}(\OO_K^>)$ is connected. Since
\[A:=\phi_J^{-1}(\OO_J^\geqslant)\cap\phi_K^{-1}(\OO_K^\geqslant)=(\phi_J^{-1}(\OO_J^>)\cap\phi_K^{-1}(\OO_K^>))\cup(\OO\cap\rr)\,,\]
is included in the closure of $\phi_J^{-1}(\OO_J^>)\cap\phi_K^{-1}(\OO_K^>)$ in $\rr_\cc$, it follows that $A$ is connected. Moreover, $A$ intersects the real axis. As a consequence, $\OO$ is spear-simple.
\item Now assume $\OO$ to be spear-simple and let us prove that $\OO$ is a hinged domain, i.e., that for all $\alpha,\beta\in\rr$ with $\beta\geq0$ and for every two points $x=\alpha+\beta J,y=\alpha+\beta K$ belonging to the intersection between the sphere $S:=\alpha+\beta\s$ and $\OO$, the equivalence $x\sim y$ holds. This is clearly true when $J=K$, i.e., $x=y$. Now consider the case when $J\neq K$: the connected component $\phi_J^{-1}(\OO_J^\geqslant)\cap\phi_K^{-1}(\OO_K^\geqslant)$ that includes $\alpha+\beta\ui$ intersects the real axis at some point $\alpha'$. In particular, there exists a path from $\alpha+\beta\ui$ to $\alpha'$ in $\phi_J^{-1}(\OO_J^\geqslant)\cap\phi_K^{-1}(\OO_K^\geqslant)$. Thus: $(y,\alpha')$ shadows $(x,\alpha')$; the point $y$ is strongly hinged to the point $x$; and $x\sim y$, as desired.\qedhere
\end{enumerate}
\end{proof}

\begin{example}
It has been proven in~\cite[Proposition 2.18]{geometricfunctiontheory} that every open subset of $\hh$ that is starlike with respect to a real point is a simple slice domain, whence a spear-simple and hinged domain.
\end{example}

In the forthcoming Examples~\ref{ex:mainsail} we will exhibit a family of examples of hinged domains that are not simple slice domains. Examples~\ref{ex:spearsimple} will present a subfamily comprising spear-simple domains that are not simple slice domains. Examples~\ref{ex:sconnected} will provide one example of hinged domain that is not spear-simple.

An example of speared domain which is not a hinged domain can be found in~\cite[Pages 4--5]{douren1}: in that work, the authors present an example of slice domain where the Global Extension Theorem does not hold. To see explicitly that that specific slice domain is not simple (nor spear-simple), see~\cite[Example 4.4]{geometricfunctiontheory}. Another related article is~\cite{dourensabadini}.

It is useful to describe two more classes of speared domains, distinct from the class of spear-simple domains, both included in the class of hinged domains.

\begin{definition}
Let $U$ be an open subset of $\hh$. We say that $U$ is \emph{$\s$-connected} if, for each $x\in U$, the intersection $\s_x\cap U$ is connected. We say that $U$ \emph{has a main sail} if there exists $J_0\in\s$ such that $\phi_J^{-1}(U_J^\geqslant)\subseteq\phi_{J_0}^{-1}(U_{J_0}^\geqslant)$ for all $J\in\s$; if this is the case, then $U_{J_0}^\geqslant$ is called a \emph{main sail} for $U$.
\end{definition}

\begin{proposition}
\begin{enumerate}
\item Every $\s$-connected speared domain is a hinged domain.
\item Every speared domain having a main sail is a hinged domain.
\end{enumerate}
\end{proposition}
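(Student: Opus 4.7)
The plan is to check, in each of the two cases, that any pair of points $x=\alpha+\beta J$ and $x'=\alpha+\beta K$ of $\OO$ with common real part $\alpha$ and imaginary radius $\beta\geq0$ satisfies $x\sim x'$, which is exactly the hinged property. In both parts I will produce an explicit chain of length one or two involving only simple steps, so the argument reduces to pattern-matching Definitions~\ref{def:stronglyhinged} and~\ref{def:hinged} against the structural hypothesis; no double step needs to be invoked.

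For part 1, the $\s$-connectedness of $\OO$ forces $\s_x\cap\OO$ to consist of a single connected component, which must contain both $x$ and $x'$. Therefore $x'$ lies in the connected component of $\s_x\cap\OO$ that contains $x$, which is precisely condition~1 of a simple step applied to the pair $x_0:=x$, $x_1:=x'$. The two-term sequence $\{x_0,x_1\}$ is then a chain of length one with a simple step at $0$, giving $x\sim x'$ and hence the conclusion.

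For part 2, let $J_0\in\s$ be such that $\OO_{J_0}^\geqslant$ is a main sail and set $x_*:=\alpha+\beta J_0$. The main sail inclusion applied to $\alpha+\beta\ui\in\phi_J^{-1}(\OO_J^\geqslant)$ yields $\alpha+\beta\ui\in\phi_{J_0}^{-1}(\OO_{J_0}^\geqslant)$, so $x_*\in\OO$. By Remark~\ref{rmk:path} there is a path $\gamma:[0,1]\to\phi_J^{-1}(\OO_J^\geqslant)$ with $\gamma(0)=\alpha'\in\rr$ and $\gamma(1)=\alpha+\beta\ui$; the main sail hypothesis forces the image of $\gamma$ to lie inside $\phi_{J_0}^{-1}(\OO_{J_0}^\geqslant)$ as well, hence inside the intersection $E^{J_0,J}$. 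After reversing orientation if needed, this exhibits a path in $E^{J_0,J}$ witnessing that $(x,\alpha')$ shadows $(x_*,\alpha')$ in $\OO$; in other words, $x$ is strongly hinged to $x_*$ with real witness $\alpha'$. Running the same argument with $K$ in place of $J$ shows that $x'$ is strongly hinged to $x_*$, and transitivity of $\sim$, applied to two chains of length one of simple-step type~2, gives $x\sim x_*\sim x'$, hence $x\sim x'$. The only substantive observation is that the main sail hypothesis is exactly what upgrades an ambient path in $\phi_J^{-1}(\OO_J^\geqslant)$ to a path in the intersection $E^{J_0,J}$, bypassing any need for double steps or longer chain constructions.
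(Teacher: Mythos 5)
Your proof is correct and follows essentially the same route as the paper: part 1 matches verbatim (condition 1 of a simple step applied to the chain $\{x, x'\}$), and in part 2 you reduce to strong hinging to the ``main sail'' point $x_*=\alpha+\beta J_0$ by the same lift of a speared-property path into $E^{J_0,J}$, followed by symmetry and transitivity of $\sim$. The only (inconsequential) addition you make is the explicit check that $x_*\in\OO$, which the paper leaves implicit.
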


\begin{proof}
Let $\OO\subseteq\hh$ be a speared domain.
\begin{enumerate}
\item Assume $\OO$ to be $\s$-connected. Take any $x\in\OO$: since $\s_x\cap\OO$ is connected, there is a simple step from $x$ to any $y\in\s_x\cap\OO$, whence $x\sim y$. As a consequence, $\OO$ is a hinged domain, as desired. 
\item Assume $\OO$ to have a main sail $\OO_{J_0}^\geqslant$. We claim that, for all $\alpha,\beta\in\rr$ with $\beta\geq0$ and for every $J$ such that the point $x=\alpha+\beta J$ belongs to $\OO$, the point $x_0=\alpha+\beta J_0$ is strongly hinged to $x$, whence $x\sim x_0$. Now, for each point $y=\alpha+\beta K$ belonging to $\OO$ it holds $y\sim x_0$ and $x\sim x_0$, whence $y\sim x$ by symmetry and transitivity. Thus, $\OO$ is a hinged domain, as desired.\\
We are left with proving our claim. Since $\OO$ is a speared domain, the connected component of $\OO_J^\geqslant$ that includes $x$ also includes a real point $\alpha'$. Take any path within $\phi_J^{-1}(\OO_J^\geqslant)$ from $\alpha+\beta\ui$ to $\alpha'$. Since $\phi_J^{-1}(\OO_J^\geqslant)\subseteq\phi_{J_0}^{-1}(\OO_{J_0}^\geqslant)$, the support of the path is also included in $\phi_{J_0}^{-1}(\OO_{J_0}^\geqslant)$. Thus, $(x,\alpha')$ shadows $(x_0,\alpha')$ and $x_0$ is strongly hinged to $x$ in $\OO$, as claimed.\qedhere
\end{enumerate}
\end{proof}

We are now ready for the announced examples of hinged domains that are not simple slice domains.

\begin{examples}\label{ex:mainsail}
Each speared domain $\underline{\OO}$ constructed in Examples~\ref{ex:speared} is a hinged domain because it has a main sail: namely, its $C$-shaped half-slice $\underline{\OO}_{J_0}^\geqslant=\phi_{J_0}(C)$. We have already proven that it is not a slice domain.
\end{examples}

By carefully choosing the lower semicontinuous width functions $w_1,w_2:[-1,1]\to[0,2]$ used in the construction of $\underline{\OO}$ in Examples~\ref{ex:speared}, we can illustrate all classes of domains defined in the present subsection with examples (not belonging to the class of simple slice domains). Eventually, our examples will show that each class is distinct from all others.

\begin{examples}\label{ex:spearsimple}
Consider again the (non simple) speared domain $\underline{\OO}$ constructed in Examples~\ref{ex:speared}. Assume $w_1$ and $w_2$ to coincide throughout $[-1,1]$. Then, for each choice of distinct $J=x_1i+x_2j+x_3k,K=y_1i+y_2j+y_3k\in\s$, it holds
\[\phi_J^{-1}(\underline{\OO}_J^\geqslant)\cap\phi_K^{-1}(\underline{\OO}_K^\geqslant)=D_{x_3}\cap D_{y_3}\in\{D_{x_3},D_{y_3}\}\,.\]
Since (both for $r=x_3$ and for $r=y_3$) every connected component of $D_r$ includes the real interval $(-1,0)$ or the real interval $(2,3)$, we conclude that $\underline{\OO}$ is spear-simple.

For instance, let us pick the width functions
\begin{align*}
&w_1(r):=\left\{
\begin{array}{lll}
4r+4 & \mathrm{\ if\ } & r\in[-1,-1/2]\\
-4r & \mathrm{\ if\ } & r\in[-1/2,0]\\
4r & \mathrm{\ if\ } & r\in[0,1/2]\\
-4r+4 & \mathrm{\ if\ } & r\in[1/2,1]
\end{array}
\right.\,,\\
&w_2(r):=w_1(r) \mathrm{\quad for\ all\quad} r\in[-1,1]\,,
\end{align*}
portrayed in Figure~\ref{fig:omega1}: they are continuous, vanish at $\pm1$ and have sum greater than $2$ exactly in $(-3/4,-1/4)\cup(1/4,3/4)$ (corresponding to the dashed part of the figure). Let us denote the resulting $\underline{\OO}$ as $\underline{\OO}^1$. By our previous discussion, $\underline{\OO}^1$ is spear-simple. Additionally, $\underline{\OO}^1$ is not $\s$-connected because the intersection $\big(1+\frac72\s\big)\cap\underline{\OO}^1$ has two connected components. These can be visualized cutting Figure~\ref{fig:omega1} with a horizontal line at level $1=\re\big(1+\frac72\ui\big)$ and are
\begin{align*}
&\{1+7/2(x_1i+x_2j+x_3k)\,|\,x_1i+x_2j+x_3k\in\s, x_3\in(-3/4,-1/4)\}\,,\\
&\{1+7/2(x_1i+x_2j+x_3k)\,|\,x_1i+x_2j+x_3k\in\s, x_3\in(1/4,3/4)\}\,.
\end{align*}
\end{examples}

\begin{figure}[htbp]
\centering
\includegraphics[height=6.5cm]{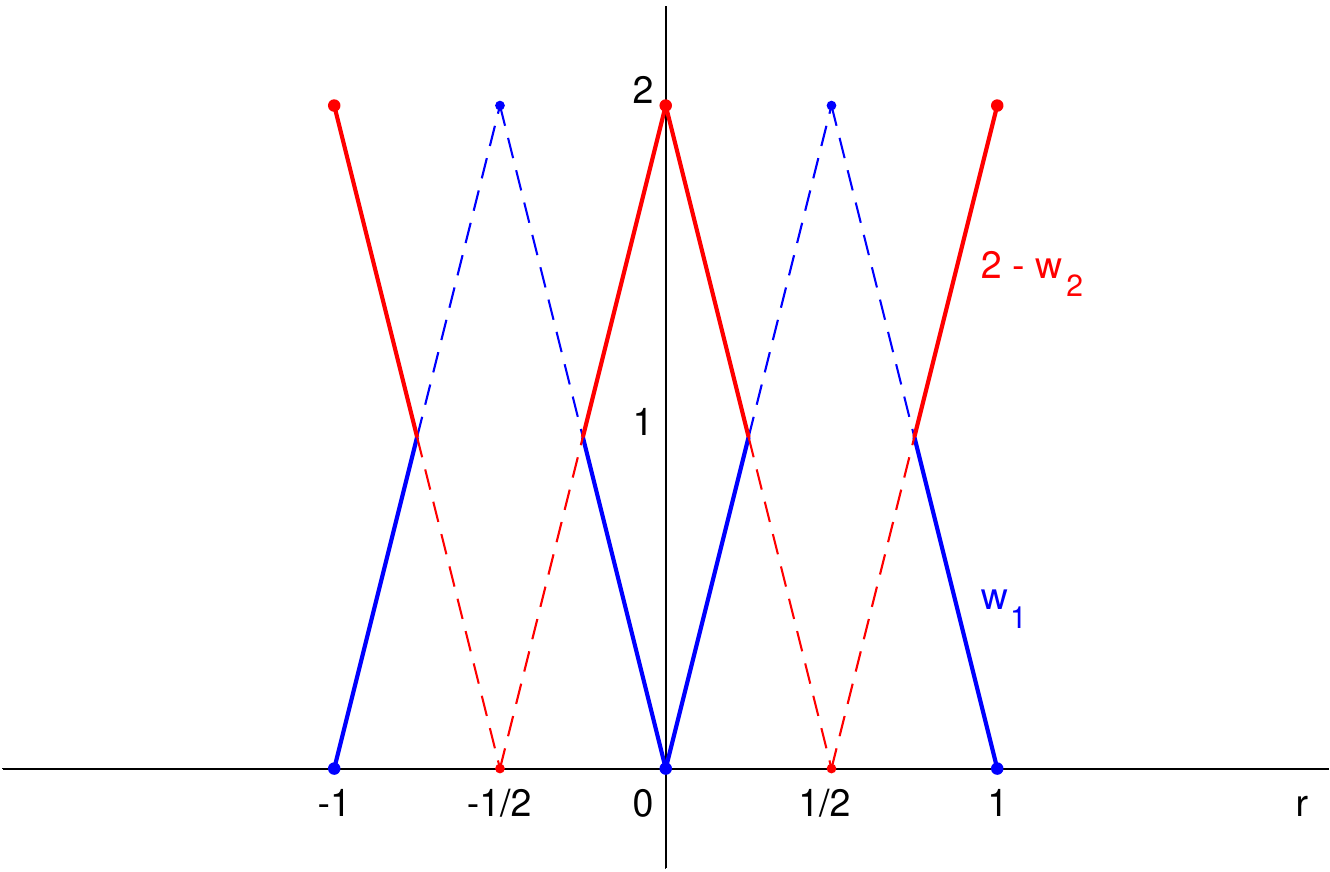}
\caption{The width functions of $\underline{\OO}^1$.}\label{fig:omega1}
\end{figure}

\begin{examples}\label{ex:sconnected}
Consider again the speared domain $\underline{\OO}$ constructed in Examples~\ref{ex:speared}. Assume $w_1,w_2$ to be concave and assume there exists $r_1\in(-1,1)$ such that $\max_{[-1,1]}w_s=w_s(r_1)$ for $s=1,2$. We claim that, for each $\alpha\in[0,2]$, the union of the superlevel sets $w_1^{-1}((\alpha,2])$ and $w_2^{-1}((2-\alpha,2])$ is an open subinterval of $[-1,1]$. Indeed: the concavity assumption guarantees that the superlevel sets $w_1^{-1}((\alpha,2])$ and $w_2^{-1}((2-\alpha,2])$ are open subintervals of $[-1,1]$; the assumption on the maximum guarantees that, if $w_1^{-1}((\alpha,2])$ is not empty, then it includes $r_1$ because $w_1(r_1)=\max_{[-1,1]}w_1>\alpha$ and that, if $w_2^{-1}((2-\alpha,2])$ is not empty, then it includes $r_1$ because $w_2(r_1)=\max_{[-1,1]}w_2>2-\alpha$.

Let us prove that $\underline{\OO}$ is $\s$-connected by fixing $\alpha+\beta\ui\in C$ and arguing that the intersection between the sphere $S:=\alpha+\beta\s$ and $\underline{\OO}$ is connected. This is obviously true if $\alpha+\beta\ui\in R$, since in such a case $S$ is entirely included in $\underline{\OO}$. We therefore assume $\alpha+\beta\ui\in[0,2]\times(3,4)$. In such a case, we see that
\[S\cap\underline{\OO}=\{\alpha+\beta(x_1i+x_2j+x_3k)\, |\, x_1i+x_2j+x_3k\in\s, w_1(x_3)>\alpha\mathrm{\ or\ }w_2(x_3)>2-\alpha\}\]
is connected, as an immediate consequence of the fact that $w_1^{-1}((\alpha,2])\cup w_2^{-1}((2-\alpha,2])$ is an open subinterval of $[-1,1]$.

For instance, the width functions
\begin{figure}[htbp]
\centering
\includegraphics[height=6.5cm]{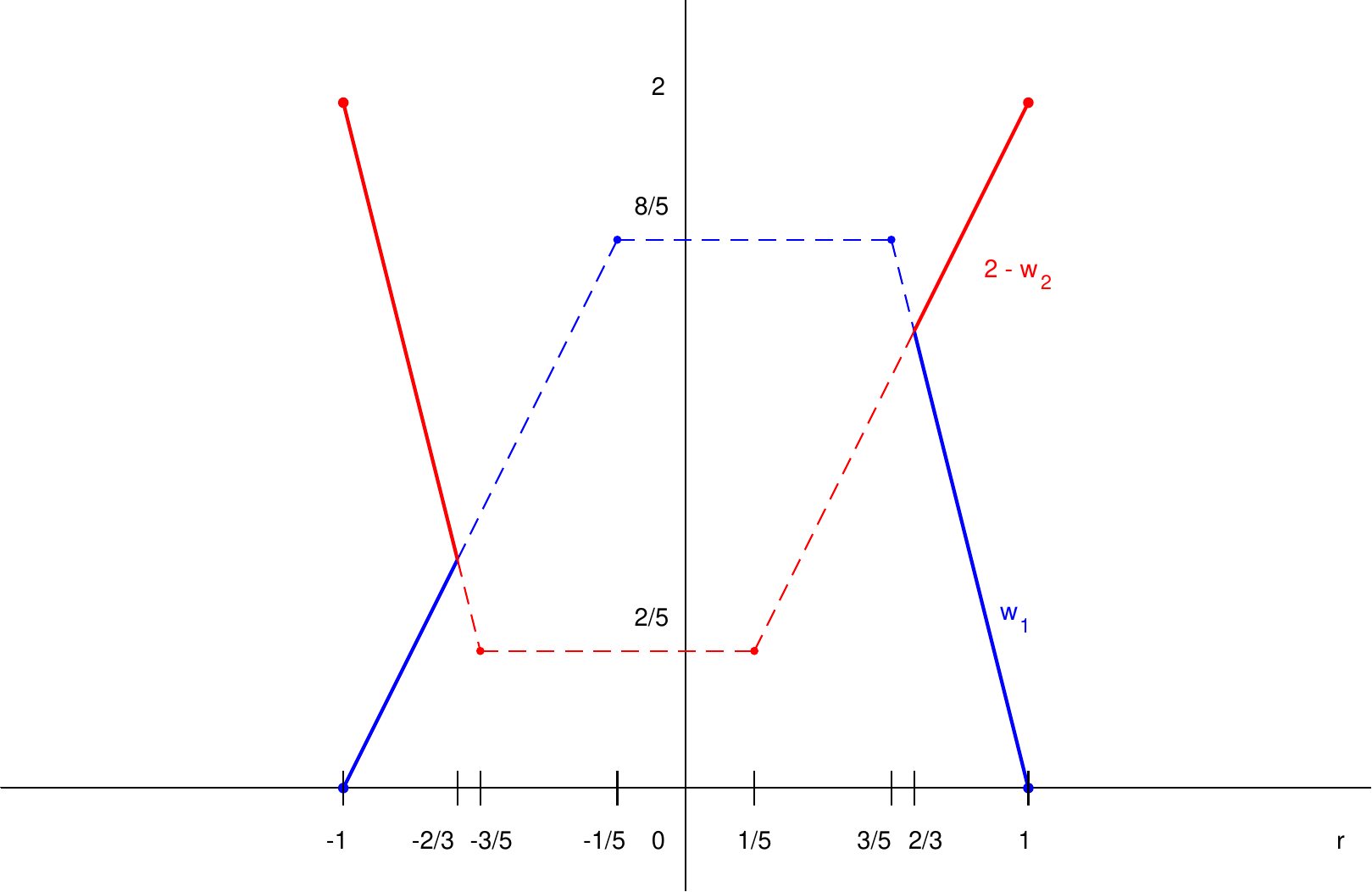}
\caption{The width functions of $\underline{\OO}^2$.}\label{fig:omega2}
\end{figure}
\begin{align*}
&w_1(r):=\left\{
\begin{array}{lll}
2r+2 & \mathrm{\ if\ } & r\in[-1,-1/5]\\
8/5 & \mathrm{\ if\ } & r\in[-1/5,3/5]\\
-4r+4 & \mathrm{\ if\ } & r\in[3/5,1]
\end{array}
\right.\,,\\
&w_2(r):=w_1(-r) \mathrm{\quad for\ all\quad} r\in[-1,1]\,,
\end{align*}
portrayed in Figure~\ref{fig:omega2}, are continuous and concave; they both vanish at $\pm1$ and take their maximum value $8/5$ at $0$; they have sum $w_1+w_2$ greater than $2$ exactly in $(-2/3,2/3)$ (corresponding to the dashed part of the figure). Let us denote the resulting $\underline{\OO}$ as $\underline{\OO}^2$. By our previous discussion, $\underline{\OO}^2$ is $\s$-connected. However, $\underline{\OO}^2$ is not spear-simple. For instance, if we set $J:=\frac1{\sqrt{2}}j+\frac1{\sqrt{2}}k$ and $K:=-J$, then the intersection
\[A:=\phi_J^{-1}\left(\left(\underline{\OO}^2\right)_J^\geqslant\right)\cap\phi_K^{-1}\left(\left(\underline{\OO}^2\right)_K^\geqslant\right)\]
has a connected component that does not meet the real axis.
\begin{figure}
\centering
\includegraphics[height=11.5cm]{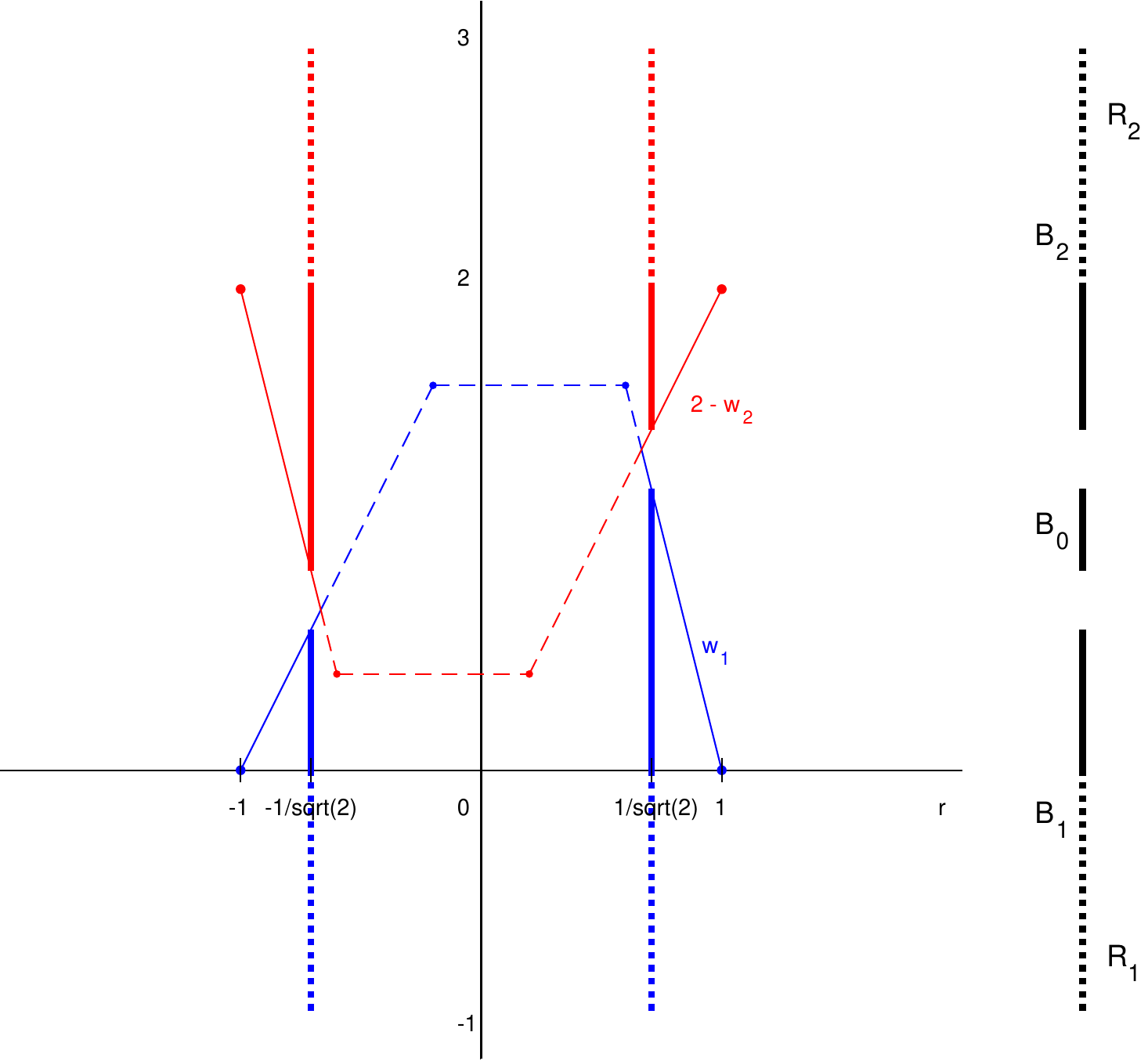}
\caption{A (one-dimensional) portrait of $A=(R_1\cup B_1)\cup B_0\cup(R_2\cup B_2)$.}\label{fig:omega2bis}
\end{figure}
This fact is visible in Figure~\ref{fig:omega2bis} and can be proven by direct computation, as follows. We compute
\[w_1(1/\sqrt{2})=4-2\sqrt{2},\quad w_1(-1/\sqrt{2})=2-\sqrt{2}\,,\]
\[2-w_2(1/\sqrt{2})=2-w_1(-1/\sqrt{2})=\sqrt{2},\quad 2-w_2(-1/\sqrt{2})=2-w_1(1/\sqrt{2})=2\sqrt{2}-2\,.\]
Since $2-\sqrt{2}<2\sqrt{2}-2<4-2\sqrt{2}<\sqrt{2}$, the intersection $A$ is the union between $R=R_1\cup R_2$, the rectangle $B_1:=\left(-1,2-\sqrt{2}\right)\times(3,4)$, the rectangle $B_0:=\left(2\sqrt{2}-2,4-2\sqrt{2}\right)\times(3,4)$ and the rectangle $B_2:=\left(\sqrt{2},3\right)\times(3,4)$. We remark that $B_0$ is a connected component of the intersection $A$ that does not intersect the real axis, as claimed; the other two connected components being the $\Gamma$-shaped figures $R_1\cup B_1$ and $R_2\cup B_2$.
\end{examples}

\begin{example}\label{ex:mainsail2}
Let us choose the width functions $w_1(r):=2-2|r|=:w_2(r)$ (for $r\in[-1,1]$), portrayed in Figure~\ref{fig:omega0}, and denote the resulting speared domain $\underline{\OO}$ as $\underline{\OO}^0$. We remark that $w_1,w_2$ are coinciding, continuous and concave functions on $[-1,1]$, vanishing at $\pm1$ and taking their maximum value $2$ at $0$. Their sum $w_1+w_2$ is greater than $2$ exactly in $(-1/2,1/2)$ (corresponding to the dashed part of the figure). Based on the discussions in Examples~\ref{ex:mainsail},~\ref{ex:spearsimple} and~\ref{ex:sconnected}, the domain $\underline{\OO}^0$ is spear-simple and $\s$-connected; moreover, it has a main sail.
\end{example}

\begin{figure}[htbp]
\centering
\includegraphics[height=6.5cm]{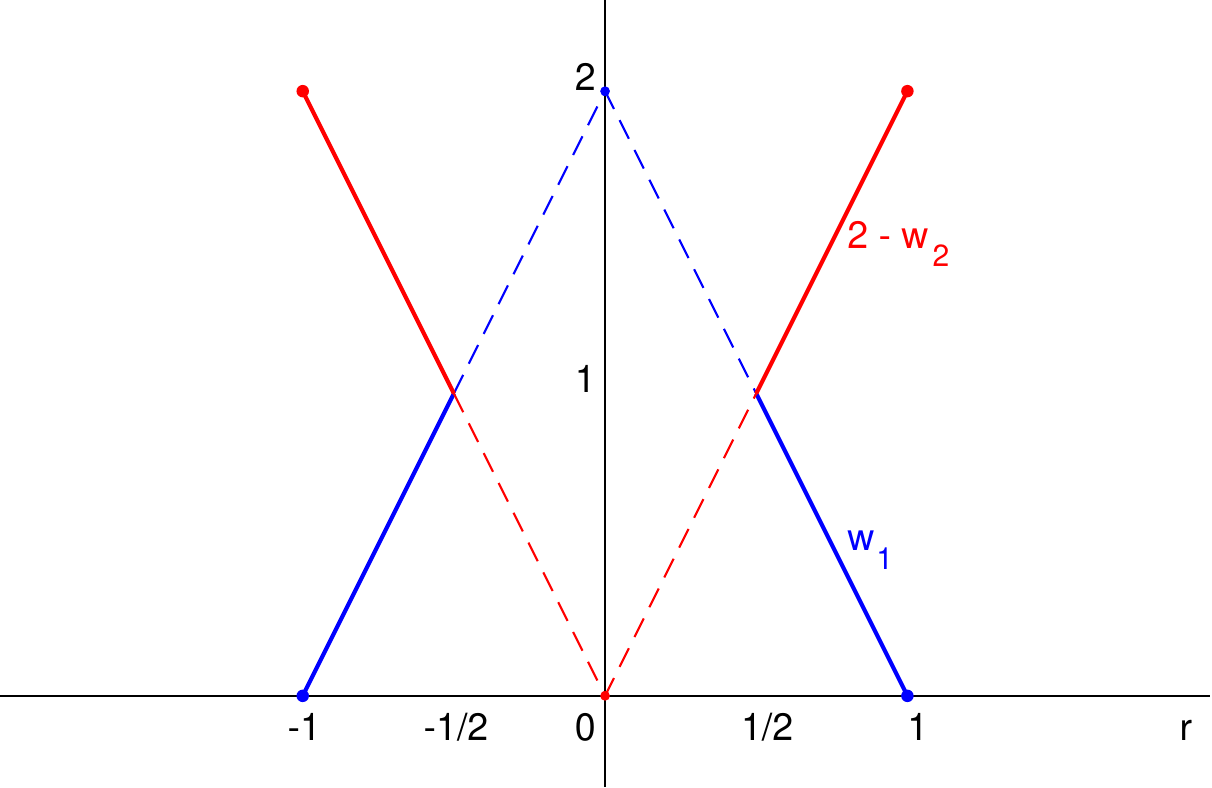}
\caption{The width functions of $\underline{\OO}^0$.}\label{fig:omega0}
\end{figure}

In the next example, we adopt the customary notation $\chi_{(a,b)}$ for the characteristic function of the interval $(a,b)$.

\begin{figure}[htbp]
\centering
\includegraphics[height=6.8cm]{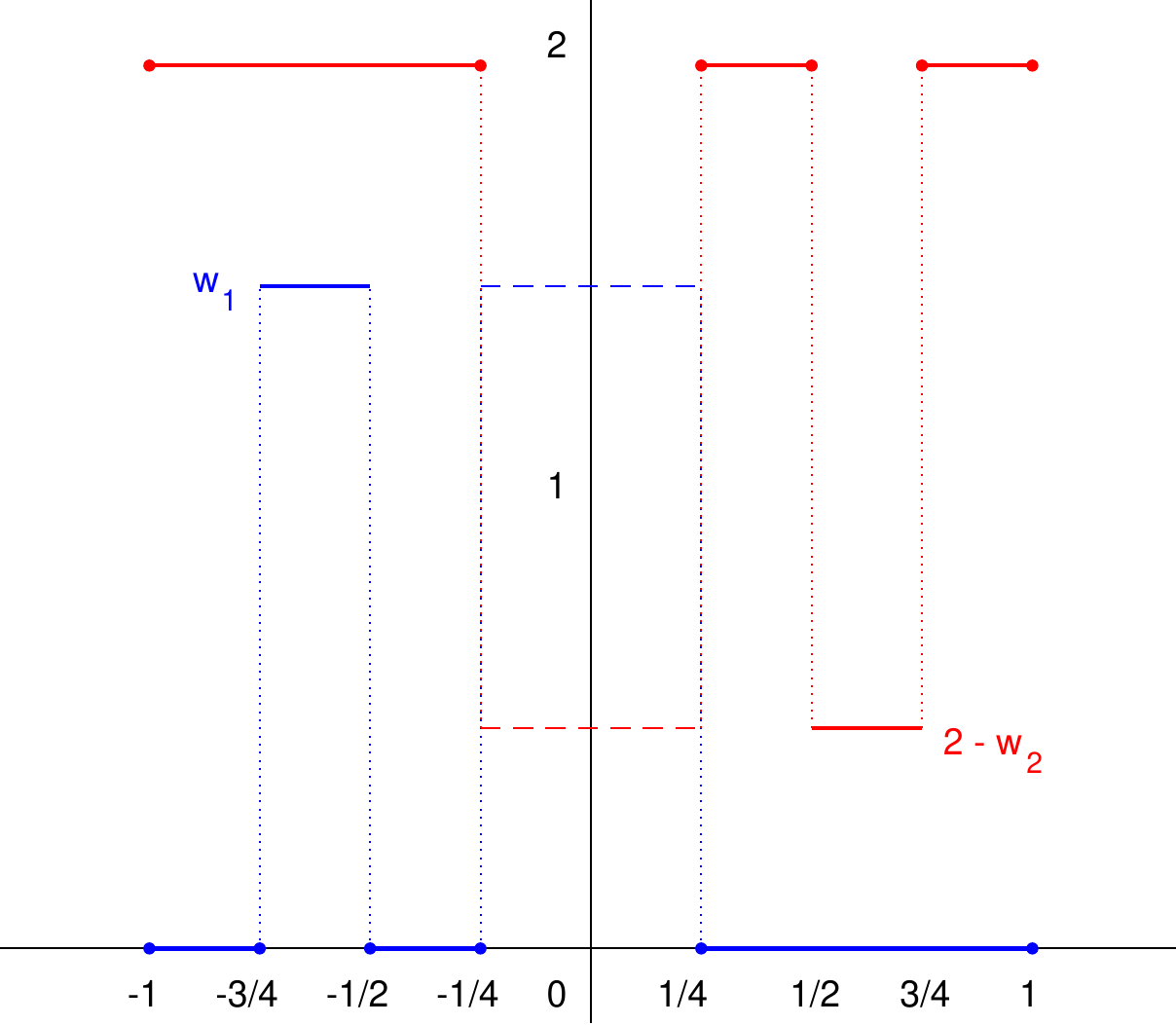}
\caption{The width functions of $\underline{\OO}^3$.}\label{fig:omega3}
\end{figure}

\begin{example}\label{ex:mainsail3}
According to Examples~\ref{ex:speared} and~\ref{ex:mainsail}, we construct a hinged domain $\underline{\OO}^3$ with a main sail, picking the width functions
\begin{align*}
&w_1:=\frac32\chi_{(-3/4,-1/2)}+\frac32\chi_{(-1/4,1/4)}\,,\\
&w_2:=\frac32\chi_{(-1/4,1/4)}+\frac32\chi_{(1/2,3/4)}\,,
\end{align*}
portrayed in Figure~\ref{fig:omega3}. By direct inspection, $w_1,w_2$ are lower semicontinuous, zero at $\pm1$ and fulfill the inequality $w_1+w_2>2$ exactly in the interval $(-1/4,1/4)$ (corresponding to the dashed part of the figure). We claim that $\underline{\OO}^3$ is not spear-simple nor $\s$-connected.

To prove our first claim, we remark that, for all $r\in(-3/4,-1/2)$, the set $D_r$ is the union between the $\Gamma$-shaped figure $\Gamma_1=R_1\cup \big((-1,3/2)\times(3,4)\big)$ and the rectangle $R_2$; while for all $r\in(1/2,3/4)$, the set $D_r$ is the union between the rectangle $R_1$ and the $\Gamma$-shaped figure $\Gamma_2=R_2\cup \big((1/2,3)\times(3,4)\big)$. Since $-\sqrt{2}/2\in(-3/4,-1/2)$ and $\sqrt{2}/2\in(1/2,3/4)$, if we set $J_-:=\sqrt{2}/2j-\sqrt{2}/2k$ and $J_+:=\sqrt{2}/2j+\sqrt{2}/2k$, then we get
\[\phi_{J_-}^{-1}\left(\left(\underline{\OO}^3\right)_{J_-}^\geqslant\right)\cap\phi_{J_+}^{-1}\left(\left(\underline{\OO}^3\right)_{J_+}^\geqslant\right) = R_1\cup\big((1/2,3/2)\times(3,4)\big)\cup R_2\,,\]
where the second connected component $(1/2,3/2)\times(3,4)$ does not meet the real axis. The previous equality can be visualized cutting Figure~\ref{fig:omega3} along the vertical lines $r=-\sqrt{2}/2$ and $r=\sqrt{2}/2$ and observing that the common shade of the two cuts corresponds to the interval from level $1/2$ to level $3/2$.

To prove our second claim, we consider the sphere $S:=1+\frac72\s$. Its intersection with $\underline{\OO}^3$ has three connected components. These components can be visualized cutting Figure~\ref{fig:omega3} with a horizontal line at level $1=\re\big(1+\frac72\ui\big)$ and are
\begin{align*}
&\{1+7/2(x_1i+x_2j+x_3k)\,|\,x_1i+x_2j+x_3k\in\s, x_3\in(-3/4,-1/2)\}\,,\\
&\{1+7/2(x_1i+x_2j+x_3k)\,|\,x_1i+x_2j+x_3k\in\s, x_3\in(-1/4,1/4)\}\,,\\
&\{1+7/2(x_1i+x_2j+x_3k)\,|\,x_1i+x_2j+x_3k\in\s, x_3\in(1/2,3/4)\}\,.
\end{align*}
Our claim is thus proven.
\end{example}

The next remark suggests simple ways to construct further examples of speared domains that are $\s$-connected domains or have a main sail.

\begin{remark}
Let $U$ be a symmetric open subset of $\hh$, let $H$ be a closed subset of $\hh$, let $\OO:=U\setminus H$ and assume $\OO$ to be a speared domain.
\begin{enumerate}
\item If $H$ is a closed half-space, a closed ball, or the complement of an open ball in $\hh$, then $\OO$ is $\s$-connected.
\item If there exists $J_0\in\s$ such that $H\cap\cc_{J_0}^\geqslant=\emptyset$, then $\OO$ has $\OO_{J_0}^\geqslant$ as a main sail.
\end{enumerate}
In both cases, $\OO$ is a hinged domain.
\end{remark}

For each class we introduced thus far in the present section, we have only made use of simple steps when proving it was included in the class of hinged domains. The next result allows to build new examples using double steps, too.

\begin{definition}\label{def:addingidenticalsails}
Let $\OO$ be a hinged domain. Consider a family $\{U_\lambda\}_{\lambda\in\Lambda}$ of open subsets of $\hh$ with the following properties:
\begin{enumerate}
\item for all $\lambda\in\Lambda$, the set $U_\lambda$ is $\s$-connected and has a main sail $(U_\lambda)_{J_\lambda}^\geqslant$;
\item there exists an open subset $D'$ of $\rr_\cc$ such that $(U_\lambda)_{J_\lambda}^\geqslant=\phi_{J_\lambda}(D')$ for all $\lambda\in\Lambda$;
\item there exists a set $D$, which intersects every connected component of $D'$, such that $\OO_{J_\lambda}^\geqslant\supseteq\phi_{J_\lambda}(D)$ for all $\lambda\in\Lambda$;
\item $\OO_{D'\setminus D}$ does not intersect $\OO$.
\end{enumerate}
In such a case, we say that the union
\[\OO':=\OO\cup\bigcup_{\lambda\in\Lambda}U_\lambda\,,\]
is \emph{obtained from $\OO$ by adding identical sails}. If $\Lambda$ is a singleton, we also say that $\OO'$ is \emph{obtained from $\OO$ by adding a sail}.
\end{definition}

\begin{remark}\label{rmk:addingidenticalsails}
In the situation described in Definition~\ref{def:addingidenticalsails}, the inclusion
\[\OO'\setminus\OO\subseteq\OO_{D'}\]
holds. This follows from the inclusion $U_\lambda\subseteq\OO_{D'}$ valid for each $\lambda\in\Lambda$, which, in turn, is a consequence of the chain of inclusions
\[\phi_{J}^{-1}\left((U_\lambda)_{J}^\geqslant\right)\subseteq\phi_{J_\lambda}^{-1}\left((U_\lambda)_{J_\lambda}^\geqslant\right)=D'\]
valid for all $J\in\s$.
\end{remark}

\begin{proposition}\label{prop:addingidenticalsails}
Let $\OO\subseteq\hh$ be a hinged domain. If $\OO'$ is a speared domain obtained from $\OO$ by adding identical sails, then $\OO'$ is a hinged domain.
\end{proposition}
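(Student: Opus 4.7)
The plan is to verify the defining property of a hinged domain directly: for arbitrary $x,x'\in\OO'$ with $\s_x=\s_{x'}$, writing $x=\alpha+\beta I$ and $x'=\alpha+\beta I'$, I construct a chain in $\OO'$ from $x$ to $x'$. A preliminary observation I rely on repeatedly is that any chain in $\OO$ is automatically a chain in $\OO'$, since each of the conditions defining simple and double steps (same connected component within a sphere slice, existence of a shadowing path in $E^{J,K}$) is preserved when enlarging the ambient domain.

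The key reduction lemma to establish first is this: if $y=\alpha_y+\beta_y I\in U_\lambda$, then $y\sim \alpha_y+\beta_y J_\lambda$ in $\OO'$ via a single simple step. Indeed, the main sail inclusion $\phi_I^{-1}((U_\lambda)_I^\geqslant)\subseteq D'=\phi_{J_\lambda}^{-1}((U_\lambda)_{J_\lambda}^\geqslant)$ gives $\alpha_y+\beta_y J_\lambda\in (U_\lambda)_{J_\lambda}^\geqslant\subseteq U_\lambda\subseteq\OO'$, and the $\s$-connectedness of $U_\lambda$ places both points in the same connected component of $\s_y\cap U_\lambda\subseteq\s_y\cap\OO'$.

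Using this reduction I distinguish cases according to whether $x,x'$ lie in $\OO$ or in some $U_\lambda\setminus\OO$. If $x,x'\in\OO$, the hinged property of $\OO$ handles them directly. If exactly one of them lies in $\OO$, say $x\in\OO$ and $x'\in U_{\lambda'}\setminus\OO$, the fact that $x\in\s_x\cap\OO$ combined with condition 4 of Definition~\ref{def:addingidenticalsails} forces $\alpha+\beta\ui\in D$, so $\alpha+\beta J_{\lambda'}\in\OO_{J_{\lambda'}}^\geqslant\subseteq\OO$: the preliminary simple step $x'\sim \alpha+\beta J_{\lambda'}$ is then concatenated with a chain in $\OO$ joining $x$ to $\alpha+\beta J_{\lambda'}$. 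The same argument handles $x\in U_\lambda\setminus\OO,\ x'\in U_{\lambda'}\setminus\OO$ whenever $\alpha+\beta\ui\in D$, reducing to $\alpha+\beta J_\lambda\sim \alpha+\beta J_{\lambda'}$ in $\OO$.

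The main obstacle is the remaining case, in which $x\in U_\lambda,\ x'\in U_{\lambda'}$ are both outside $\OO$ and $\alpha+\beta\ui\in D'\setminus D$. By condition 4 the whole sphere $\s_x$ then avoids $\OO$, so no intermediate point in $\OO$ lies on $\s_x$ and a genuine double step is required. Let $C'$ be the connected component of $D'$ containing $\alpha+\beta\ui$ and pick $\alpha_1+\beta_1\ui\in D\cap C'$, which exists by condition 3. Then $\alpha_1+\beta_1 J_\lambda,\alpha_1+\beta_1 J_{\lambda'}\in\OO$, and the hinged property of $\OO$ provides a chain $w_0=\alpha_1+\beta_1 J_\lambda,w_1,\dots,w_{t'}=\alpha_1+\beta_1 J_{\lambda'}$ in $\OO$ (of length $t'\ge 1$ when $J_\lambda\ne J_{\lambda'}$ and $\beta_1>0$). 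I form the concatenated sequence
\[
z_0:=\alpha+\beta J_\lambda,\qquad z_{s+1}:=w_s\text{ for }0\le s\le t',\qquad z_{t'+2}:=\alpha+\beta J_{\lambda'},
\]
which lies in $\OO'$. The endpoint transitions $z_0\to z_1$ and $z_{t'+1}\to z_{t'+2}$ are not simple (different spheres) but each is shadowed by the other through a path inside $C'\subseteq D'\subseteq E^{J_\lambda,J_{\lambda'}}$ (the final inclusion holds because $D'=\phi_{J_\lambda}^{-1}((U_\lambda)_{J_\lambda}^\geqslant)\subseteq \phi_{J_\lambda}^{-1}((\OO')_{J_\lambda}^\geqslant)$ and symmetrically for $J_{\lambda'}$), producing a common outer double step at $(0,t'+1)$ that automatically nests every double step inherited from the inner chain, so the ordering constraints of Definition~\ref{def:hinged} are met. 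The degenerate sub-case where $D\cap C'$ contains a real $\alpha_1$ is handled even more cheaply by a single simple step from $z_0$ to $z_{t'+2}$ via strong hinging through the real point $\alpha_1\in\OO'\cap\rr$. Concatenating the resulting chain with the preliminary simple steps $x\sim z_0$ and $z_{t'+2}\sim x'$ and using transitivity of $\sim$ yields $x\sim x'$ in $\OO'$, so $\OO'$ is hinged.
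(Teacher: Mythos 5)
Your proof is correct and follows essentially the same approach as the paper's: reduce to the main-sail points $\alpha+\beta J_\lambda$, $\alpha+\beta J_{\lambda'}$ via $\s$-connectedness of the sails, split on whether $\alpha+\beta\ui\in D$ or $D'\setminus D$, and in the latter case wrap a chain of $\OO$ (joining the two main-sail points over a reference point of $D$) inside a single outer double step realized by a path in $D'\subseteq E^{J_\lambda,J_{\lambda'}}$. The only cosmetic differences are that you state the ``chains in $\OO$ remain chains in $\OO'$'' observation explicitly and handle the real-$\alpha_1$ sub-case separately via strong hinging, whereas the paper's uniform double-step argument covers it automatically; both are valid.
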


\begin{proof}
We adopt the notations of Definition~\ref{def:addingidenticalsails} and assume, additionally, $\OO'$ to be a speared domain. We have to prove that, for all $\alpha,\beta\in\rr$ with $\beta\geq0$ and for every two points $x=\alpha+\beta J,y=\alpha+\beta K$ belonging to the intersection between the sphere $S:=\alpha+\beta\s$ and $\OO'$, it holds $x\sim y$. This is obviously true if $x,y\in\OO$, because $\OO$ is a hinged domain by hypothesis. We therefore assume that at least one among $x,y$ does not belong to $\OO$, without loss of generality $x$. In particular, there exists $\lambda\in\Lambda$ such that $x\in U_\lambda\setminus\OO$. For future use, we define $x_\lambda:=\alpha+\beta J_\lambda$: since $U_\lambda$ is $\s$-connected, the points $x,x_\lambda\in S\cap U_\lambda$ must belong to the same connected component of $S\cap\OO'$, whence $x\sim x_\lambda$ in $\OO'$. Since $x\in\OO'\setminus\OO$, Remark~\ref{rmk:addingidenticalsails} implies that $x\in\OO_{D'}$, whence $\alpha+\beta\ui\in D'$. If, moreover, $\alpha+\beta\ui\in D$, then
\[x_\lambda=\alpha+\beta J_\lambda\in\phi_{J_\lambda}(D)\subseteq\OO_{J_\lambda}^\geqslant\subset\OO\,.\]
We separate two cases:
\begin{itemize}
\item Suppose $y\in\OO$. Since $\OO_{D'\setminus D}$ does not intersect $\OO$, it follows that $\alpha+\beta\ui\in D$. Thus, the point $x_\lambda$ belongs to $\OO$. Since $\OO$ is a hinged domain, it follows that $x_\lambda\sim y$ in $\OO$, whence in $\OO'$. Since we already established that $x\sim x_\lambda$ in $\OO'$, it follows that $x\sim y$ in $\OO'$, as desired.
\item Suppose $y\not\in\OO$, so that there exists $\mu\in\Lambda$ with $y\in U_\mu\setminus\OO$. If we set $y_\mu:=\alpha+\beta J_\mu$, then, reasoning as before, we find that $y_\mu\sim y$ in $\OO'$. We claim that $x_\lambda\sim y_\mu$ in $\OO'$. The chain of equivalences $x\sim x_\lambda\sim y_\mu\sim y$ in $\OO'$ yields the thesis.\\
We prove our claim as follows. If $\alpha+\beta\ui\in D$, then $x_\lambda,y_\mu\in\OO$, whence $x_\lambda\sim y_\mu$ in $\OO$ and in $\OO'$, as claimed. Now assume $\alpha+\beta\ui\in D'\setminus D$: the connected component of $D'$ including the point $\alpha+\beta\ui$ intersects $D$ at some point $\alpha'+\beta'\ui\neq\alpha+\beta\ui$: we can thus pick a path in $D'$ from $\alpha+\beta\ui$ to $\alpha'+\beta'\ui$. The points
\begin{align*}
&x'_\lambda:=\alpha'+\beta' J_\lambda\in\phi_{J_\lambda}(D)\subseteq\OO_{J_\lambda}^\geqslant\,,\\
&y'_\mu:=\alpha'+\beta' J_\mu\in\phi_{J_\mu}(D)\subseteq\OO_{J_\mu}^\geqslant
\end{align*}
both belong to the hinged domain $\OO$. Thus, $x'_\lambda\sim y'_\mu$ in $\OO$, i.e., there exists a chain $\{p_s\}_{s=1}^{t-1}$ from $x'_\lambda$ to $y'_\mu$ in $\OO\subseteq\OO'$. Now, consider again the path we have picked in $D'$: since $\phi_{J_\lambda}(D')\subseteq(\OO')_{J_\lambda}^\geqslant,\phi_{J_\mu}(D')\subseteq(\OO')_{J_\mu}^\geqslant$, it follows that $(y_\mu,y'_\mu)$ shadows $(x_\lambda,x'_\lambda)$ in $\OO'$. Thus, if we set $p_0:=x_\lambda$ and $p_t:=y_\mu$, then $\{p_s\}_{s=0}^{t}$ is a chain from $x_\lambda$ to $y_\mu$ in $\OO'$ with a double step at $(0,t-1)$. It follows that $x_\lambda\sim y_\mu$, as claimed.\qedhere
\end{itemize}
\end{proof}

We can now provide examples of hinged domains that do not have a main sail. We do so by adding identical sails to previously constructed examples.

\begin{figure}[htbp]
\centering
\includegraphics[height=7cm]{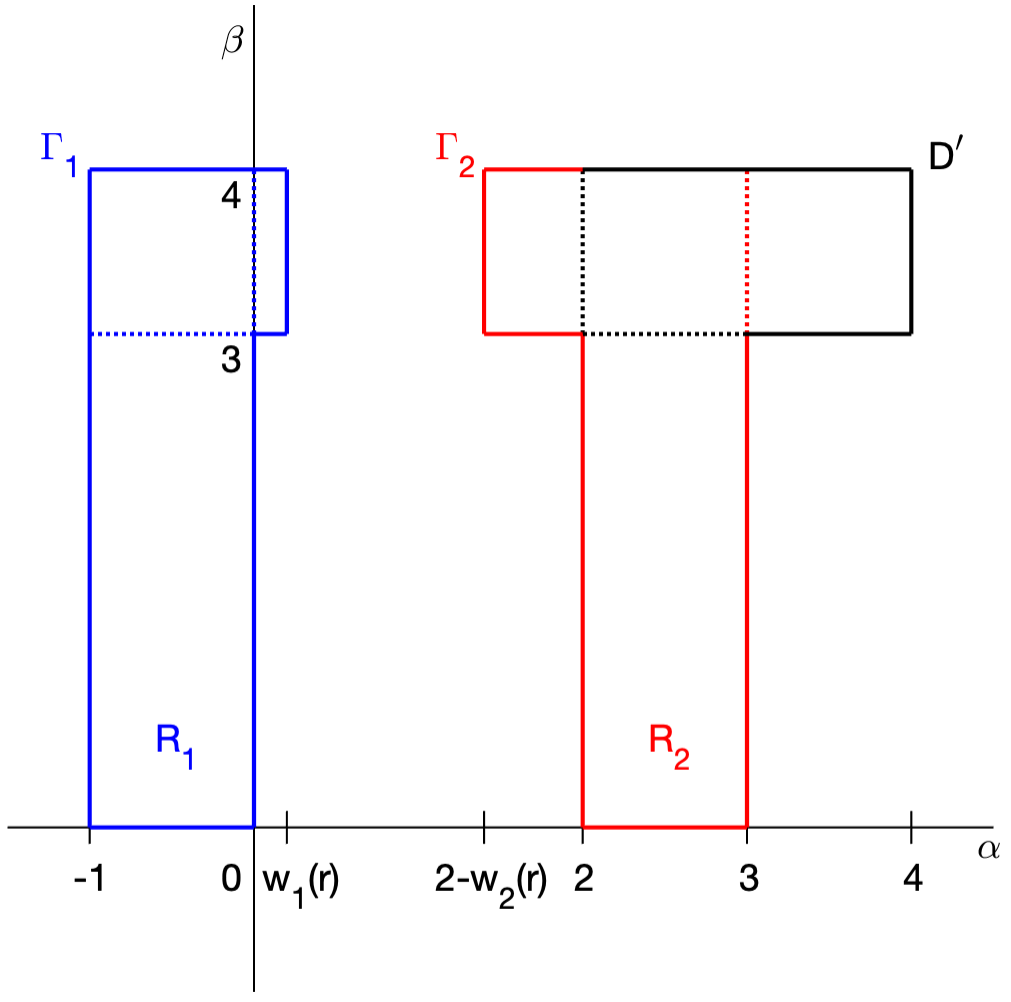}
\caption{The planar domain $D_r\cup D'$ of Example~\ref{ex:nomainsail}.}\label{fig:DrandDprime}
\end{figure}

\begin{examples}\label{ex:nomainsail}
Each speared domain $\underline{\OO}$ constructed in Examples~\ref{ex:speared} is a hinged domain because it has a main sail, as proven in Examples~\ref{ex:mainsail}. We additionally assume we can pick $\rho$ such that $w_1(r)+w_2(r)\leq2$ when $r\geq\rho$ (this is always the case if $w_1+w_2$ is continuous, since $w_1(1)+w_2(1)=0$). We let $D'$ denote the the rectangle $(2,4)\times(3,4)$ which is portrayed in black in Figure~\ref{fig:DrandDprime} and contains the square $D:=(2,3)\times(3,4)\subset R_2$ (dotted, in black and red). We set
\begin{align*}
U_\rho:=\bigcup_{x_1i+x_2j+x_3k\in\s,\,x_3>\rho}\phi_{x_1i+x_2j+x_3k}(D')
\end{align*}
and make the following remarks.
\begin{itemize}
\item The union $\underline{\OO}':=\underline{\OO}\cup U_\rho$ is still a speared domain and is obtained from $\underline{\OO}$ by adding a sail. Indeed, $U_\rho$ is an $\s$-connected open subset of $\hh$ with a main sail $(U_\rho)_{k}^\geqslant=\phi_{k}(D')$. Moreover, $\phi_{k}(D)\subseteq\underline{\OO}_{k}$; actually, $\OO_D\subset\underline{\OO}$. Finally, the circularization $\OO_{D'\setminus D}$ of $D'\setminus D$ (which is the part of $D'$ to the right of the red dotted line) does not intersect $\underline{\OO}$.
\item By Proposition~\ref{prop:addingidenticalsails}, the speared domain $\underline{\OO}'$ is a hinged domain.
\item There exists no $J=x_1i+x_2j+x_3k\in\s$ such that $(\underline{\OO}')_{J}^\geqslant$ is a main sail for $\underline{\OO}'$. If such a main sail existed, it would have to include both a copy $\phi_J(D')$ of the added sail and the $C$-shaped figure $\phi_J(C)$. The former inclusion would imply $x_3>\rho$; the latter inclusion would imply $w_1(x_3)+w_2(x_3)>2$; this would contradict our construction.
\item The hinged domain $\underline{\OO}'$ is spear-simple if, and only if, $\underline{\OO}$ is. Indeed, take any distinct $J=x_1i+x_2j+x_3k,K=y_1i+y_2j+y_3k\in\s$. If $x_3\leq\rho$ or $y_3\leq\rho$, then the intersections
\begin{align*}
&A:=\phi_J^{-1}\big(\big(\underline{\OO}\big)_J^\geqslant\big)\cap\phi_K^{-1}\big(\big(\underline{\OO}\big)_K^\geqslant\big)\,,\\
&B:=\phi_J^{-1}\big(\big(\underline{\OO}'\big)_J^\geqslant\big)\cap\phi_K^{-1}\big(\big(\underline{\OO}'\big)_K^\geqslant\big)
\end{align*}
coincide. Only when $x_3,y_3>\rho$: one of the connected components of $A$, namely the $\Gamma$-shaped component including $R_2$ and intersecting the real axis in the interval $(2,3)$, changes into a larger $T$-shaped component of $B$ including $R_2\cup D'$, still intersecting the real axis in $(2,3)$; each further connected component of $A$ equals one of the remaining connected components of $B$.
\item The hinged domain $\underline{\OO}'$ is $\s$-connected if, and only if, $\underline{\OO}$ is. Indeed, for every sphere $S:=\alpha+\beta\s$ intersecting $U_\rho$: either $S\subset\OO_D\subset\underline{\OO}\subset\underline{\OO}'$; or $S$ is contained in $\OO_{D'\setminus D}$, which does not intersect $\underline{\OO}$, whence $S\cap\underline{\OO}'=S\cap U_\rho$ (which is connected).
\end{itemize}
In particular, if we consider the spear-simple domain $\underline{\OO}^0$ constructed in Example~\ref{ex:mainsail2} (which was $\s$-connected) and we set $(\underline{\OO}^0)':=\underline{\OO}^0\cup U_{1/2}$, then $(\underline{\OO}^0)'$ is an example of spear-simple domain that is $\s$-connected but has no main sail. If we consider the spear-simple domain $\underline{\OO}^1$ constructed in Example~\ref{ex:spearsimple} (which was not $\s$-connected) and we set $(\underline{\OO}^1)':=\underline{\OO}^1\cup U_{3/4}$, then $(\underline{\OO}^1)'$ is an example of spear-simple domain that is not $\s$-connected nor has a main sail. If we consider the $\s$-connected speared domain $\underline{\OO}^2$ constructed in Example~\ref{ex:sconnected} (which was not spear-simple) and we set $(\underline{\OO}^2)':=\underline{\OO}^2\cup U_{2/3}$, then $(\underline{\OO}^2)'$ is an example of $\s$-connected speared domain that is not spear-simple nor has a main sail.
\end{examples}

In our last example of hinged domain, double steps are necessary. In particular, the hinged domain we construct has no main sail and is not spear-simple nor $\s$-connected.

\begin{figure}[htbp]
\centering
\begin{minipage}{5.5cm}
  \centering
  \includegraphics[width=5cm]{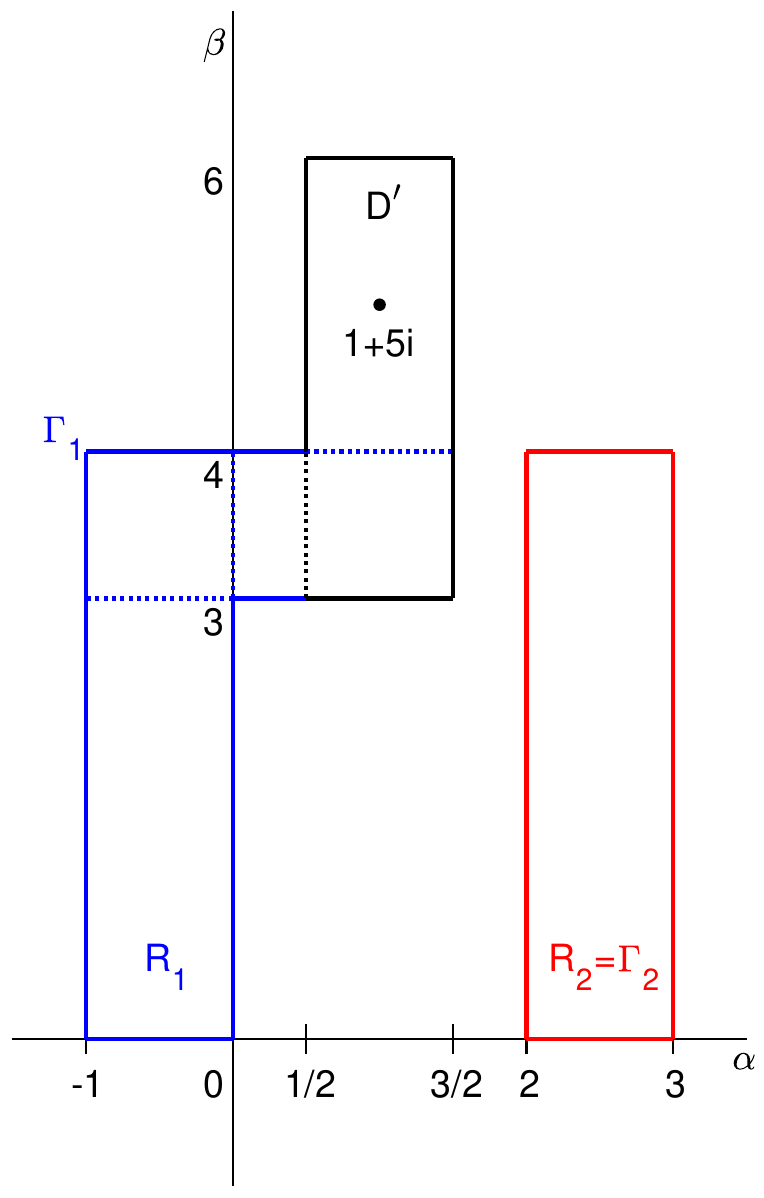}
\end{minipage}%
\begin{minipage}{5.5cm}
  \centering
  \includegraphics[width=5cm]{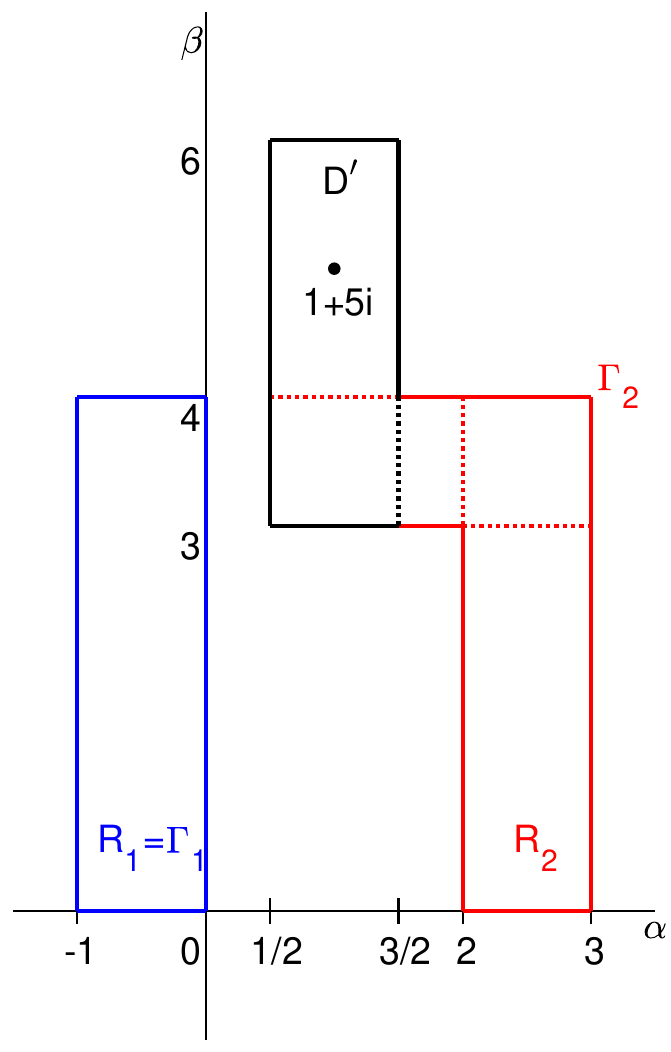}
\end{minipage}
\caption{The planar domains $D_{y_3}\cup D'$ and $D_{y'_3}\cup D'$ of Example~\ref{ex:doublestep}, with $y_3\in(-3/4,-1/2)$ and for $y_3'\in(1/2,3/4)$.}\label{fig:DminusplusandDprime}
\end{figure}

\begin{example}\label{ex:doublestep}
Consider again the speared domain $\underline{\OO}^3$ with a main sail constructed in Example~\ref{ex:mainsail3}. Let $D'$ denote the rectangle $(1/2,3/2)\times(3,6)$, which is portrayed twice, in black, in Figure~\ref{fig:DminusplusandDprime} and contains the square $D:=(1/2,3/2)\times(3,4)$ (dotted). Set
\begin{align*}
&U_-:=\bigcup\limits_{\substack{x_1i+x_2j+x_3k\in\s,\\ x_3\in(-3/4,-1/2)}}\phi_{x_1i+x_2j+x_3k}(D')\,,\\
&U_+:=\bigcup\limits_{\substack{x_1i+x_2j+x_3k\in\s,\\ x_3\in(1/2,3/4)}}\phi_{x_1i+x_2j+x_3k}(D')\,.
\end{align*}
We recall that we have set $J_-:=\sqrt{2}/2j-\sqrt{2}/2k$ and $J_+:=\sqrt{2}/2j+\sqrt{2}/2k$ and make the following remarks.
\begin{itemize}
\item The union $(\underline{\OO}^3)':=\underline{\OO}^3\cup U_- \cup U_+$ is still a speared domain and is obtained from $\underline{\OO}^3$ by adding identical sails. Indeed, $U_\pm$ is an $\s$-connected open subset of $\hh$. The half slice $(U_\pm)_{J_\pm}^{\geqslant}=\phi_{J_\pm}(D')$ is a main sail for $U_\pm$. The equality $w_1(-\sqrt{2}/2)=3/2$ implies that $\phi_{J_-}(D)\subseteq\phi_{J_-}(D_{-\sqrt{2}/2})=(\underline{\OO}^3)_{J_-}^\geqslant$, while the equality $2-w_2(\sqrt{2}/2)=1/2$ implies that $\phi_{J_+}(D)\subseteq\phi_{J_+}(D_{\sqrt{2}/2})=(\underline{\OO}^3)_{J_+}^\geqslant$. Moreover, the circularization $\OO_{D'\setminus D}$ of $D'\setminus D$ (which is the part of $D'$ above the colored dotted line) does not intersect $\underline{\OO}^3$.
\item By Proposition~\ref{prop:addingidenticalsails}, the speared domain $(\underline{\OO}^3)'$ is a hinged domain.
\item We can exhibit two hinged points in $(\underline{\OO}^3)'$, namely $1+5J_-\in U_-$ and $1+5J_+\in U_+$, such that every chain connecting them must include a double step. This is because the intersection of the sphere $S:=1+5\s$ with $(\underline{\OO}^3)'$ has two connected components, namely $S\cap U_-$ and $S\cap U_+$ and because every chain comprising only simple steps must be entirely contained in either component. Indeed, there can only be a simple step between two points belonging to separate connected components if these two points are strongly hinged, but we can show that this is impossible in our case. If $1+5J\in S\cap U_-$, then $J=y_1i+y_2j+y_3k\in\s$ with $y_3\in(-3/4,-1/2)$, whence
\[\phi_J^{-1}(((\underline{\OO}^3)')_J^\geqslant)=D_{y_3}\cup D'=\Gamma_1\cup D'\cup R_2\]
(see the left part of Figure~\ref{fig:DminusplusandDprime}). If $1+5J'\in S\cap U_+$, then $J'=y'_1i+y'_2j+y'_3k\in\s$ with $y_3'\in(1/2,3/4)$, whence
\[\phi_{J'}^{-1}(((\underline{\OO}^3)')_{J'}^\geqslant)=D_{y'_3}\cup D'=R_1\cup D'\cup\Gamma_2\]
(see the right part of Figure~\ref{fig:DminusplusandDprime}). Within the intersection
\[\phi_J^{-1}(((\underline{\OO}^3)')_J^\geqslant)\cap\phi_{J'}^{-1}(((\underline{\OO}^3)')_{J'}^\geqslant)=R_1\cup D'\cup R_2\,,\]
the connected component of $1+5\ui$ is $D'$, which does not meet the real axis. Our claim that $1+5J'$ cannot be strongly hinged to $1+5J$ now follows from part {\it 2.} of Lemma~\ref{lem:stronglyhinged}.
\item As a consequence of the previous discussion, $(\underline{\OO}^3)'$ is not spear-simple nor $\s$-connected and does not have a main sail.
\end{itemize}
\end{example}

We are in a position to provide explicit examples of chains of length $t>1$.

\begin{figure}[htbp]
\centering
\begin{minipage}{4.9cm}
  \centering
  \includegraphics[width=4cm]{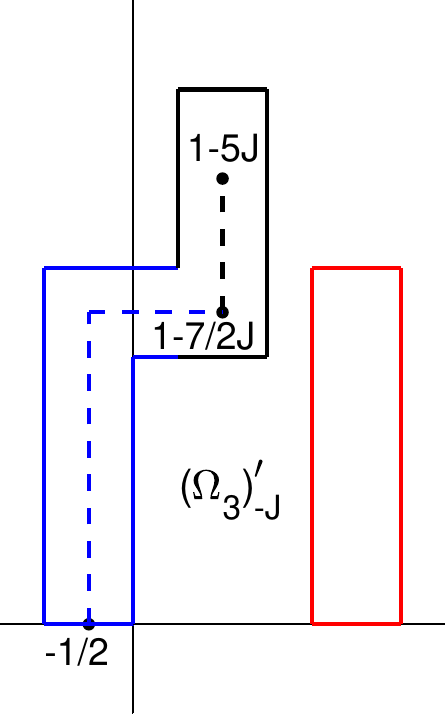}
\end{minipage}%
\begin{minipage}{4.9cm}
  \centering
  \includegraphics[width=4cm]{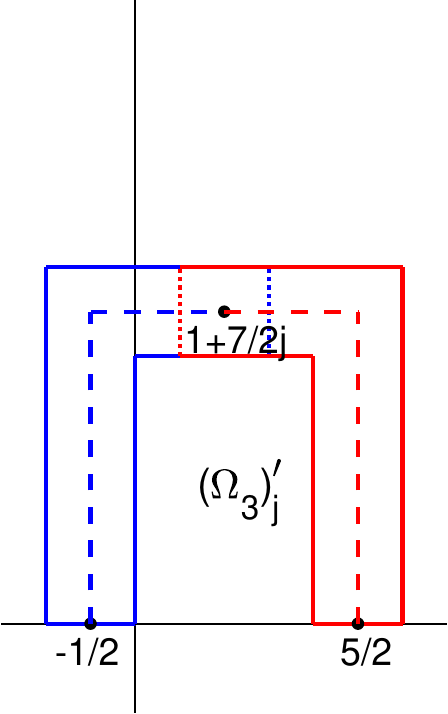}
\end{minipage}%
\begin{minipage}{4.9cm}
  \centering
  \includegraphics[width=4cm]{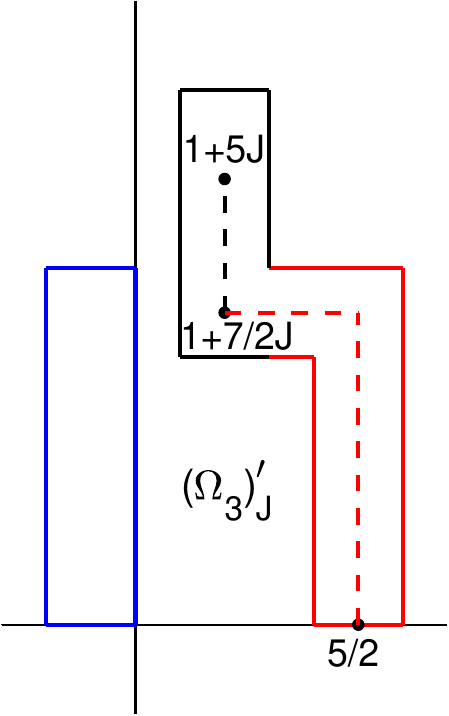}
\end{minipage}
\caption{Within $(\underline{\Omega}_3)'$: there is a simple step from $x_1=1-7/2J$ to $x_2=1+7/2j$ because they are strongly hinged, as shown by the dashed blue paths; there is a simple step from $x_2=1+7/2j$ to $x_3=1+7/2J$ because they are strongly hinged, as shown by the dashed red paths; while the dashed black segments from $x_0=1-5J$ to $x_1=1-7/2J$ and from $x_3=1+7/2J$ to $x_4=1+5J$ correspond to a double step.}\label{fig:chain}
\end{figure}

\begin{example}\label{ex:chain}
Let $J:=\sqrt{2}/2j+\sqrt{2}/2k$ and let us construct a chain connecting $1-5J$ to $1+5J$ in $(\underline{\Omega}_3)'$. Since $\sqrt{2}/2\in(1/2,3/4)$, we already know that these two points cannot be connected by a simple step. If we set
\[x_0:=1-5J,\ x_1:=1-7/2J,\ x_2:=1+7/2j,\ x_3:=1+7/2J,\ x_4:=1+5J\,,\]
then $\{x_t\}_{t=0}^4$ is a chain of length $4$ connecting $x_0$ to $x_4$ in $(\underline{\Omega}_3)'$ (portrayed in Figure~\ref{fig:chain}). Indeed: there is a simple step at $1$ because there is a path from $1+7/2\ui$ to $-1/2$ in $E^{-J,j}\subset\rr_\cc$; there is a simple step at $2$ because there is a path from $1+7/2\ui$ to $5/2$ in $E^{j,J}\subset\rr_\cc$; and there is a double step at $(0,3)$ because the line segment from $1+5\ui$ to $1+7/2\ui$ is entirely contained in $E^{-J,J}\subset\rr_\cc$.

We remark that $\{x_t\}_{t=1}^3$ is a chain of length $2$ connecting $x_1$ to $x_3$ in $\underline{\Omega}_3\subset(\underline{\Omega}_3)'$. Moreover, the points $x_1=1-7/2J$ and $x_3=1+7/2J$ cannot be connected by a chain of length $1$ in $(\underline{\Omega}_3)'\supset\underline{\Omega}_3$ because: they belong to distinct connected components of $(1+7/2\s)\cap(\underline{\Omega}_3)'$; they are not strongly hinged because the point $1+7/2\ui$ belongs to the connected component $D'$ of $E^{-J,J}\subset\rr_\cc$, which has $D'\cap\rr=\emptyset$.
\end{example}

We conclude with Table~\ref{table:cases}, which recaps the properties of the speared domains $\underline{\OO}^s$ and $(\underline{\OO}^s)'$ for $s=0,1,2,3$.

\begin{table}[htbp]
\centering
\begin{tabular}{|c|c|c|c|c|}
\hline
&{\rm spear-simple} & {\rm $\s$-connected} & {\rm with a main sail} & {\rm hinged domain} \\
\hline
$\underline{\OO}^0$ & \checkmark & \checkmark & \checkmark & \checkmark\\
\hline
$\underline{\OO}^1$ & \checkmark & $\times$ & \checkmark & \checkmark\\
\hline
$\underline{\OO}^2$ & $\times$ & \checkmark & \checkmark & \checkmark\\
\hline
$\underline{\OO}^3$ & $\times$ & $\times$ & \checkmark & \checkmark\\
\hline
$(\underline{\OO}^0)'$ & \checkmark & \checkmark & $\times$ & \checkmark\\
\hline
$(\underline{\OO}^1)'$ & \checkmark & $\times$ & $\times$ & \checkmark\\
\hline
$(\underline{\OO}^2)'$ & $\times$ & \checkmark & $\times$ & \checkmark\\
\hline
$(\underline{\OO}^3)'$ & $\times$ & $\times$ & $\times$ & \checkmark\\
\hline
\end{tabular}
\caption{Properties of the examples of speared domains constructed in the present subsection.}\label{table:cases}
\end{table}

\vfill
\section*{Acknowledgements}

This work was partly supported by INdAM, through: GNSAGA; INdAM project ``Hypercomplex function theory and applications''. The first author was also partly supported by MIUR, through: PRIN 2017 ``Moduli theory and birational classification''. The second author was also partly supported by MIUR, through: Finanziamento Premiale FOE 2014 ``Splines for accUrate NumeRics: adaptIve models for Simulation Environments''; PRIN 2017 ``Real and complex manifolds: topology, geometry and holomorphic dynamics''; PRIN 2022 ``Real and complex manifolds: geometry and holomorphic dynamics''.

The authors warmly acknowledge the role of the anonymous referee, who provided careful and useful suggestions about the presentation.

\vfill
\section*{Statements required by the publisher}

On behalf of all authors, the corresponding author states that there is no conflict of interest.
Data sharing is not applicable to this article as no datasets were generated or analyzed during the current study.

\vfill
\newpage


\end{document}